\newtheorem{theorem}{Theorem}
\newtheorem{prop}[theorem]{Proposition}
\newtheorem{lemma}[theorem]{Lemma}
\theoremstyle{remark}
\newtheorem*{example}{Example}
\newcommand{\levy}{L\'{e}vy }
\newcommand{\p}{{\mathbb P}}
\newcommand{\e}{{\mathbb E}}
\newcommand{\Q}{{\mathbb Q}}
\newcommand{\D}{{\mathrm d}}
\newcommand{\N}{{\mathbb N}}
\newcommand{\R}{{\mathbb R}}
\newcommand{\1}[1]{\mbox{\rm\large  1}_{\{#1\}}}
\renewcommand{\a}{{\alpha}}
\newcommand{\ov}[1]{{\overline{#1}}}
\newcommand{\n}{{(n)}}
\newcommand{\cip}{ \,{\stackrel{\p}{\to}}\, }
\newcommand{\Oh}{{\mathcal O}}
\newcommand{\sign}{{\rm sign}}
\newcommand{\convd}{{\stackrel{\mathrm d}{\,\to\,}}}
\begin{document}
\author[J.\ Gonz\'alez C\'azares and J.\ Ivanovs]{Jorge Gonz\'alez C\'azares and Jevgenijs Ivanovs}
\address{University of Warwick and Aarhus University}

\title[Recovering Brownian and jump parts]{Recovering Brownian and jump parts from high-frequency observations of a L\'evy process}

\begin{abstract}
We introduce two general non-parametric methods for recovering paths of the 
Brownian and jump components from high-frequency observations of a L\'evy 
process. The first procedure relies on reordering of independently sampled 
normal increments and thus avoids tuning parameters. The functionality of this 
method is a consequence of the small time predominance of the Brownian 
component, the presence of exchangeable structures, and fast convergence of 
normal empirical quantile functions. 
The second procedure amounts to filtering the increments and 
compensating with the final value. It requires a carefully chosen threshold, 
in which case both methods yield the same rate of convergence. This rate 
depends on the small-jump activity and is given in terms of the 
Blumenthal-Getoor index. Finally, we discuss possible extensions, including 
the multidimensional case, and provide numerical illustrations. 
\end{abstract}	

\subjclass[2010]{60G51, 60G09, 60F17 (primary), and 62M05, 60J65 (secondary)} 
\keywords{Brownian bridge; coupling; exchangeability; high-frequency statistics; reordering of increments}
\maketitle

%
%
%

\section{Introduction}
Consider a L\'evy process $X$ on $[0,1]$ and the decomposition 
\begin{equation}\label{eq:dec}
X_t=Y_t+\sigma W_t,\qquad t\in[0,1],
\end{equation}
where $\sigma\geq 0$ and $W$ is a standard Brownian motion independent of 
the L\'evy process $Y$ with no Brownian component. In this work we assume 
that $\sigma>0$ and provide two methods to recover $W$, and thus also $Y$, 
from high-frequency observations $(X_{i/n})_{i=0,\ldots,n}$ (as $n\to\infty$) 
of a given sample path of~$X$. 

More precisely, we recover the path of the bridge $(W_t-W_1t)_{t\in[0,1]}$ and 
the path of the drifted process $(Y_t+\sigma W_1t)_{t\in[0,1]}$. It is not possible 
to recover $W_1$ because it is impossible to separate a linear drift from the 
Brownian path consistently (as the respective laws are equivalent). 
Note, however, that if $Y$ has bounded variation on compacts, there is a clear 
definition of the linear drift. Using the second method, this drift can be naturally 
separated from $Y$ (but not from $W$). Importantly, the proposed procedures 
do not require knowledge on the law of $X$, except for the parameter~$\sigma$ 
(to some extent), which can be estimated efficiently from the given 
high-frequency observations~\cite{jacod_survey,jacod_reiss}. Furthermore, 
the first method, which is the main focus of this paper, completely avoids 
tuning parameters. 

Apart from their intrinsic interest, the discussed  procedures may be useful in a 
variety of applied areas. Oftentimes $\sigma W$ is interpreted as noise, 
see e.g.~\cite{NoiseReview,LevyNoise,MR1870962,PhysicsNoise,NeuralBM}, 
and thus our methods recovers the signal $Y$ up to an unknown linear drift. 
This separation can then be used to answer various further questions regarding 
the observed trajectory and its decomposition. For example, what is the 
maximal fluctuation of the signal around its linear drift 
$\sup_{t\in[0,1]}|Y_t-tY_1|$ and how does it compare with the same quantity 
for the noise component?

Various statistical procedures may benefit from pre-separation of 
the Brownian part. According to~\cite[\S 5]{masuda} `coexistence of the Gaussian 
part and the jump part makes the parametric estimation problem much more 
difficult and cumbersome' and the common strategy then is to use thresholding. 
As was exemplified in~\cite{MR2510946} through simulations, a naive choice of 
the threshold may severely deteriorate estimation performance (see also 
Proposition~\ref{prop:neg} below). Thus our first procedure can be employed to 
avoid the difficult practical problem of threshold selection. 
Furthermore, it can be used as an alternative to~\cite{Lee_jumps2,Lee_jumps1} 
to detect the presence of jumps. It must be noted, however, that essentially 
simpler problems than path decomposition may suffer from suboptimal rates 
coming from the latter part. Nevertheless, absence of tuning 
parameters may still seem attractive in applications.
 
Finally, our first method provides a coupling between the L\'evy process 
$X$ and a Brownian motion by means of approximating $W$ which can be of independent interest, 
see~\cite{blanchet} for an application where upper bounds on the 
Wasserstein distance between the laws of a L\'evy process and a Brownian 
motion are needed. This coupling is easy to simulate, making it suitable 
for (multilevel) Monte Carlo methods. We note here that, for any fixed $n$, 
the approximation of $W$ produced by the second method need not be a 
(discretely observed) Brownian motion.

\subsection{Method I: reordering of normal increments}\label{sec:meth1}
Our main procedure, which may be surprising at first, 
has a simple construction:
\begin{itemize}
	\item[a)] simulate an independent standard Brownian motion $W'$ on the grid 
	$(i/n)_{i=1,\ldots, n}$,
	\item[b)] reorder the increments of $W'$ according to the ordering of the increments of $X$.
\end{itemize}
We will show that the resultant skeleton $W^\n$ 
(see~\eqref{eq:Wprime} below for definition) satisfies: 
\begin{equation}\label{eq:meth1}
\big(W_t-W_t^\n\big)_{t\in[0,1]}\,\cip\, \big(\big(W_1-W'_1\big)t\big)_{t\in[0,1]}
\end{equation}
in supremum norm. In words, we recover the Brownian evolution up to some 
linear drift. In fact, we have a much stronger result in Theorem~\ref{thm:gral_rate} 
establishing the speed of convergence, see also Figure~\ref{fig:log_table} for a 
numerical illustration. The joint recovery is now straightforward:
\begin{equation}\label{eq:joint}
\Big(W^\n_t-W^\n_1t,X_t-\sigma(W^\n_t-W^\n_1t)\Big)
	\cip \Big(W_t-W_1 t,Y_t+\sigma W_1t\Big)
\end{equation}
in supremum norm, which  is assumed throughout unless mentioned otherwise.
Note, however, that only $X_{\lfloor tn\rfloor/n}$ are available but we may 
replace $X_t$ above by its discretised version while relaxing to 
the convergence in Skorokhod~$J_1$-topology~\cite[A2]{MR1876169}. Alternatively, we may 
look at the difference of both sides and discretise all the processes involved.

Figure~\ref{fig:X} illustrates the algorithm in the case $\sigma=1$ and $Y$ being 
a variance gamma process. In addition, we remove the random drift in the 
approximation $X-W^\n$ of $Y$ by matching the endpoints. In general, 
this is not possible in practice, and is done here only to assess the signal recovery. 

\begin{figure}[h!]
\begin{center}
\includegraphics[width=0.99\textwidth]{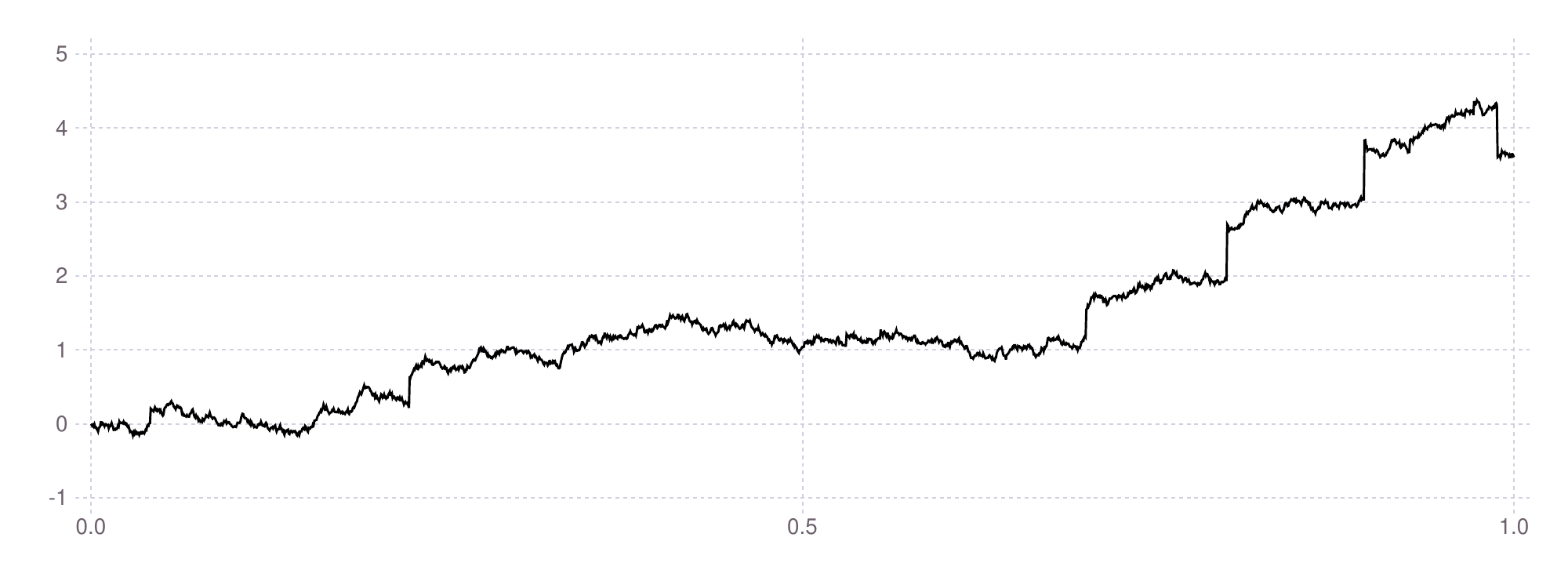}
\includegraphics[width=0.46\textwidth]{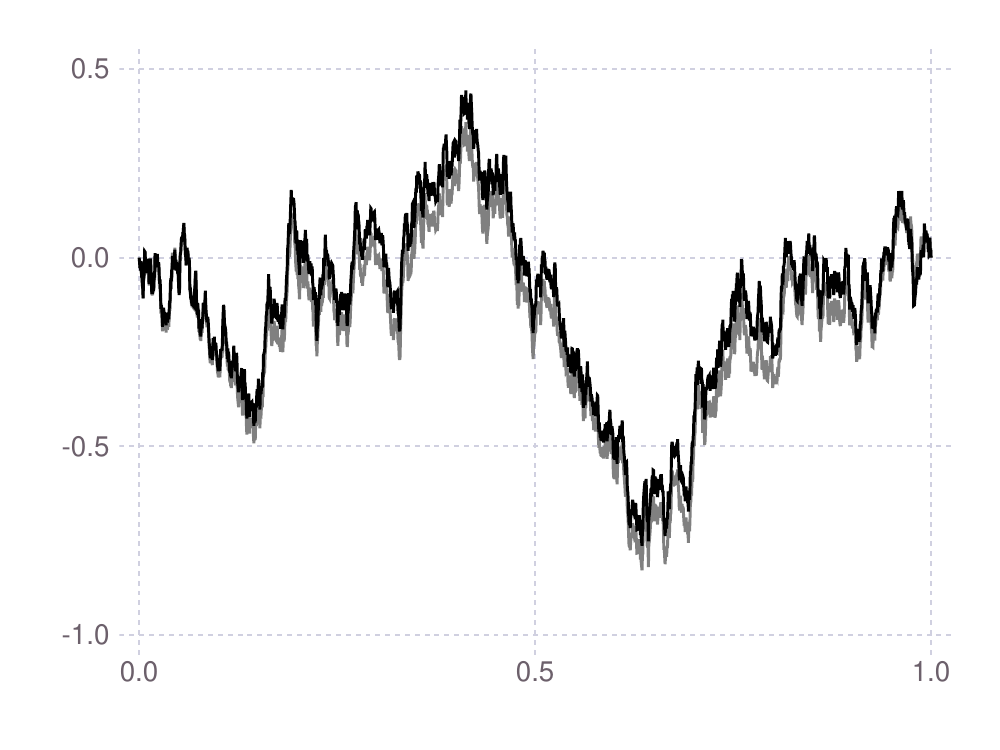}
\includegraphics[width=.52\textwidth]{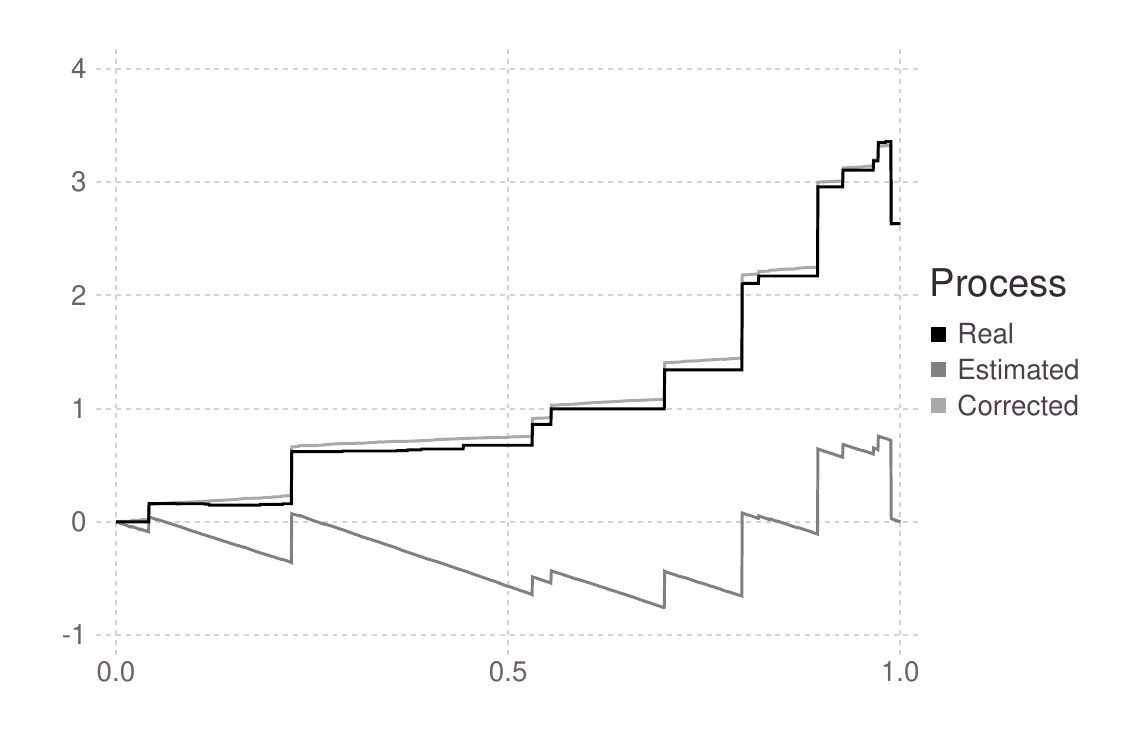}
\end{center}
\caption{\small The first picture depicts a path of $X$ for $\sigma=1$ and 
	$Y$ being variance gamma processes. The second picture 
	shows $(W_t-tW_1)_{t\in[0,1]}$ (black) and $(W^\n_t-tW^\n_1)_{t\in[0,1]}$ (grey) 
	using Method~I and  $n=10^4$. The third picture compares $Y$ and $X-W^\n$ 
	with corrected drift to match the endpoints.}
\label{fig:X}
\end{figure}

Let us provide some intuition. 
On the small time scale an overwhelming number of the increments of $X$ 
are close to those of $\sigma W$. By self-similarity the scaled increments 
of $W$ are i.i.d.\ standard normal random variables, whose empirical quantile 
function exhibits fast convergence due to light tails of the normal distribution. 
At an intuitive level this explains that 
ordering the increments of $W'$ according to the increments of $W$ or $X$ may 
produce a well-coupled process. Nonetheless, the result may still look surprising 
even for the purely Brownian case. In some sense, a path of the Brownian 
bridge is determined by the ordering of its infinitesimal increments.
Interestingly, in the case of no Brownian component ($\sigma=0$), despite the 
increments following the same order, the limiting result of this procedure is a 
standard Brownian motion independent of the original process~$X$, 
see Proposition~\ref{prop:nosigma} below. 


\subsection{Method II: threshold filter}\label{sec:meth2}
A much more
intuitive path decomposition method is based on a threshold filter. 
More precisely, we consider
\begin{equation}\label{eq:thresh_def}
\hat W^\n_t=\sigma^{-1}\sum_{i\leq tn, |\Delta^n_i X|\leq a_n}\Delta^n_i X,
\qquad t\in[0,1],
\end{equation}
which is the (scaled) skeleton that results from removing the increments of $X$ 
whose size is larger than $a_n>0$. Throughout this work 
we adopt the notation $\Delta^n_i X=X_{i/n}-X_{(i-1)/n}$ for $i=1,\ldots,n$,
which is standard in discretisation of processes~\cite{jacod}.

A wide range of work has been done on the subject of estimation via threshold 
filters, see~\cite[Ch.~9 \&~13]{jacod} 
and~\cite{MR2510946,MR2787615,MR2832818}. 
Most of these works, however, explore the recovery of a generalised 
path-variation or related quantities, and it seems that path decomposition of a 
L\'evy process has been overlooked in part. For instance, the conditions 
of~\cite[Thm~9.1.1]{jacod} are not satisfied by the identity function 
$f:x\mapsto x$ when $Y$ has infinite variation on compacts. Moreover, 
even in the covered cases, the speed of convergence seems not to be known, 
see~\cite[Thm~13.1.1]{jacod}. 

It is possible that this case has been 
overlooked since $\hat W^\n$ may explode when $Y$ has infinite variation on 
compacts. It is thus very important in that case to subtract the term 
$\hat W^\n_1 t$ as it plays a crucial compensating role. We will show that with 
the right choice of threshold $a_n$, we have
\[
\big(\hat W^\n_t-\hat W_1^\n t\big)_{t\in[0,1]}\,\cip\, \big(W_t-W_1 t\big)_{t\in[0,1]},
\]
see Theorem~\ref{thm:tunc_method} below, which also provides the rate of 
convergence. Once again, we avoid writing this convergence in the form 
of~\eqref{eq:meth1}, since then both sides may explode. However, if~$Y$ 
has bounded variation on compact intervals and possesses a linear drift 
$\gamma_0$ then 
\[
\big(\hat W^\n_t\big)_{t\in[0,1]}\,\cip\, \big(W_t+\gamma_0\sigma^{-1}t\big)_{t\in[0,1]},
\]
see Proposition~\ref{prop:drift}. 
For this method it may have been cleaner to avoid scaling by $\sigma^{-1}$, 
but we prefer to be consistent: $\hat W^\n$ and $W^\n$ both approximate 
the path of a standard Brownian motion. Note, however, that for any $n$, the 
approximation $W^\n$ is a discretely observed Brownian motion while 
$\hat W^\n$ need not be. We stress that for finite $n$ this method depends 
on the tuning parameter $a_n$, which may present a serious hurdle in applications. 
In this sense, our main ~Method~I is more robust.

\section{The main results: rates of convergence and limit laws}
Denote the L\'evy triplet (see~\cite[\S 2, Def.~8.2]{MR3185174}) 
of $X$ by $(\gamma,\sigma^2,\Pi(\D x))$ and write 
$\ov\Pi(x)=\Pi(\R\setminus(-x,x))$ for any $x>0$. 
Throughout this work we assume that $\sigma>0$ unless stated otherwise, 
and use the notation in~\eqref{eq:dec}.
The quality of decomposition of the path of $X$ crucially depends on the 
activity of small jumps. Therefore, we define two indices 
$0\leq \beta_*\leq\beta^*\leq 2$ capturing some main characteristics:
\begin{align*}
\beta^*&=\inf\Big\{p\geq 0:\int_{(-1,1)}|x|^p\Pi(\D x)<\infty\Big\},\\
\beta_*&=\inf\left\{p\geq 0:\liminf_{x\downarrow 0}x^p\ov\Pi(x)=0 \right\}.
\end{align*}
The index $\beta^*$ is known as the Blumenthal-Getoor index, 
whereas $\beta_*$ reminds Pruitt's index~\cite{pruitt}, which must lie 
between $\beta_*$ and $\beta^*$. 
Importantly, $\beta_*=\beta^*$ under some weak regularity assumptions, 
such as $\ov\Pi(x)$ being regularly varying at~$0$ with some index~$-\a$, 
in which case 
\begin{equation}\label{eq:RValpha}
\beta^*=\beta_*=\a,
\end{equation} 
a simple consequence of the standard theory of regular 
variation~\cite[\S 1]{BGT}.

\subsection{Method I}
As mentioned in \S\ref{sec:meth1} above, we consider a standard Brownian 
motion $W'$ independent of $X$ and~$W$. 
Let $\pi$ be the (random) permutation of the indices $1,\ldots,n$ such that the 
ordering of $\Delta^n_{\pi(\cdot)}W'$ coincides with that of $\Delta^n_{\cdot}X$. 
In other words, if $s$ is a permutation such that $\Delta^n_{s(\cdot)}X$ is an 
increasing sequence then $\Delta^n_{\pi(s(\cdot))}W'$ is also an increasing 
sequence. Such permutation $\pi$ is a.s.\ unique since there are a.s.\ no ties 
in either sequence. Finally, we take the corresponding partial sum process
\begin{equation}\label{eq:Wprime}
W^\n_t=\sum_{i\le nt}\Delta^n_{\pi(i)}W',\qquad t\in[0,1],
\end{equation}
which is a Brownian random walk. This can be seen by noting that the composition 
of a fixed permutation and a random uniform permutation is also uniformly 
distributed, and so the increments $\Delta^n_iW^{(n)}$ are i.i.d. indeed.
We may also keep the bridges in-between discretisation, which would then 
yield a standard Brownian motion. Note that the joint process 
$(W^\n_{i/n},W_{i/n})$, $i=1,\ldots,n$, has exchangeable increments 
but is not a random walk. 
Next we state the main result.


\begin{theorem}\label{thm:gral_rate}
For any $p\in(\beta^*,2]\cup\{2\}$ 
it holds that
\begin{equation}\label{eq:thm}
n^{(2-p)/4}\sup_{t\in [0,1]}\big|W_t-W_t^\n-(W_1-W^\n_1)t\big|\cip 0.
\end{equation}
Moreover,  this convergence fails for any $p\in[0,\beta_*)\cup\{0\}$.
\end{theorem}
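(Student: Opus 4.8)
The plan is to establish \eqref{eq:thm} by controlling the discrepancy between the two sums $W_t = \sum_{i \le nt} \Delta^n_i W$ and $W^\n_t = \sum_{i \le nt} \Delta^n_{\pi(i)} W'$ through their common ordering. First I would rescale: the increments $\sqrt n\,\Delta^n_i W$ are i.i.d.\ standard normal, and the key object is the empirical quantile function of $\{\sqrt n\,\Delta^n_i W\}$ versus that of $\{\sqrt n\,\Delta^n_i W'\}$. Writing $F_n$ for the empirical distribution function of the $n$ normalised $W$-increments and $G_n$ for that of the $W'$-increments, the permutation $\pi$ matches increments by rank, so $\Delta^n_{\pi(i)}W'$ is the $G_n$-quantile at the same rank that $\Delta^n_i W$ occupies under $F_n$. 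Hence $\sum_{i\le nt}(\Delta^n_i W - \Delta^n_{\pi(i)}W') = n^{-1/2}\sum_{i\le nt}\big(F_n^{-1}(U_{(r_i)}) - G_n^{-1}(U_{(r_i)})\big)$ up to reindexing, and the uniform closeness of both empirical quantile functions to $\Phi^{-1}$ (with the classical DKW-type $n^{-1/2}\sqrt{\log n}$-ish rate, sharpened in the tails using the light Gaussian tails) is what one leverages.

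The main step is to show that the partial-sum process of the quantile differences, after removing its value at $t=1$ (which is exactly the source of the surviving linear drift $(W_1 - W'_1)t$), is $o_\p(n^{-(2-p)/4})$ uniformly. I would split each increment into a central block and a tail block at a level like $\pm c\sqrt{\log n}$ in the rescaled coordinates. On the central block, the quantile functions of $F_n$ and $G_n$ differ by $O_\p(n^{-1/2}\log n)$ uniformly (standard empirical-process bounds for the normal), contributing $O_\p(n^{-1}\cdot n\cdot \log n) \cdot n^{-1/2} = O_\p(n^{-1/2}\log n)$ to the sum — far smaller than needed. The delicate part is the tail: the order statistics near the extremes can differ more, and there are about $O(1)$ to $O(\log n)$ of them; one controls their contribution using Gaussian tail asymptotics for the top order statistics ($\approx\sqrt{2\log n}$) and the fact that there are few of them, so the total tail contribution is $O_\p(n^{-1/2}(\log n)^{3/2})$ or so. Summing over $t$ via a maximal inequality (the partial sums form a martingale-like or exchangeable array, so Doob/Kolmogorov applies after recentering) upgrades pointwise control to uniform control. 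Crucially, none of this used jumps, so this yields the purely Brownian skeleton; the bound is $o_\p(n^{-1/4+\ep})$, comfortably inside $o_\p(n^{-(2-p)/4})$ for every $p>0$, in particular for $p=2$.

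Next I would incorporate the jumps of $Y$: the increments of $X$ are $\Delta^n_i X = \sigma \Delta^n_i W + \Delta^n_i Y$, and the ordering of $\Delta^n_i X$ differs from that of $\Delta^n_i W$ only for indices where $\Delta^n_i Y$ is non-negligible relative to the Gaussian spacing $n^{-1/2}$. The plan is: (i) bound the number $N_n$ of increments with $|\Delta^n_i Y| > n^{-1/2+\delta}$ — by the $L^p$-integrability of $\Pi$ near $0$ for $p>\beta^*$, one gets $\e N_n = O(n\cdot n^{-(1/2-\delta)p})$ plus the contribution of the finitely many ``large'' jumps, which is $O(1)$; (ii) for the many ``small-perturbation'' indices, show that the induced change in rank, hence in which $W'$-increment gets assigned, perturbs the partial sum by at most $O(n^{-1/2}\sqrt{\log n})$ per swapped pair, and there are $O_\p(N_n)$ such swaps, so the total is $O_\p(N_n\, n^{-1/2}\sqrt{\log n})$; (iii) for the $O_\p(1)$ large jumps, each one can displace a single $W'$-increment by at most the maximal order statistic $O_\p(\sqrt{\log n})$ in rescaled units, i.e.\ $O_\p(n^{-1/2}\sqrt{\log n})$, again negligible. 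Choosing $\delta$ small and balancing shows the jump-induced error is $o_\p(n^{-(2-p)/4})$ precisely when $p > \beta^*$, which is where the exponent $(2-p)/4$ enters as the genuine bottleneck.

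For the negative part — failure when $p \in [0,\beta_*)\cup\{0\}$ — the obstruction is that when $\ov\Pi(x)$ is large along a sequence $x_k\downarrow 0$ (the defining feature of $\beta_* > p$), there are, with non-vanishing probability, $\gg n^{(p')/2}$ increments of $Y$ of size $\asymp n^{-1/2}$ for some $p < p' < \beta_*$, and these are numerous enough that the reordering is genuinely disrupted on a macroscopic set of ranks; I would build a lower bound by exhibiting a block of consecutive ranks on which the assigned $W'$-increments are systematically mismatched, forcing $\sup_t |W_t - W^\n_t - (W_1-W^\n_1)t| \gtrsim n^{-(2-p)/4}$ along a subsequence with probability bounded below. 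The cleanest route is a second-moment / anti-concentration argument: estimate from below the variance of $W^\n_{1/2} - \tfrac12 W^\n_1$ conditionally on the jump configuration and show it dominates $n^{-(2-p)/2}$ infinitely often. The hardest obstacle overall is the tail/order-statistic bookkeeping in the positive direction at $p=2$ — proving that reordering by rank moves \emph{no} partial sum by more than $o_\p(1)$, uniformly in $t$, despite the extreme order statistics growing like $\sqrt{\log n}$ — since this is exactly the ``surprising'' content of \eqref{eq:meth1} and requires the fast convergence of normal empirical quantile functions to be used quite sharply rather than through a crude DKW bound.
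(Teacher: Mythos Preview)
Your decomposition into a quantile-comparison piece and a jump-correction piece mirrors the paper's split $\xi_{ni}=\tilde\xi_{ni}+\hat\xi_{ni}$, where $\tilde\xi_{ni}=n^{-p/4}(Z_{\nu(i)}-Z'_{\pi(i)}-\text{mean})$ and $\hat\xi_{ni}=n^{-p/4}(Z_i-Z_{\nu(i)})$, but two load-bearing ingredients are missing. For the uniform-in-$t$ control, the partial sums here are not martingales, so Doob/Kolmogorov does not apply. The paper instead uses that both $(\tilde\xi_{ni})_i$ and $(\hat\xi_{ni})_i$ are \emph{exchangeable} arrays with zero total sum, and invokes Kallenberg's criterion for exchangeable-increment processes: uniform convergence of the partial-sum process to $0$ is equivalent to $\sum_i\tilde\xi_{ni}^2\cip 0$ and $\sum_i\hat\xi_{ni}^2\cip 0$. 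This reduction to a single sum of squares is what replaces your maximal inequality, and it is also what drives the negative part (one shows $\sum_i\hat\xi_{ni}^2\cip\infty$ for $p<\beta_*$ by lower-bounding the number of indices with $\sqrt n|\Delta^n_iY|>6\sigma\sqrt{\log n}$ via $\ov\Pi$ and noting that the corresponding $Z_{\nu(i)}$ are pushed to the extremes while $Z_i$ stays moderate).

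The more serious gap is the jump correction. Your ``count swaps'' accounting does not work: a single perturbation $y_i$ does not cause one swap but can shift the ranks of many other indices, and your threshold $|\Delta^n_i Y|>n^{-1/2+\delta}$ corresponds, after rescaling by $\sqrt n$, to perturbations of size $n^\delta\to\infty$, so the ``small'' ones are not small in the relevant coordinates. The paper's key device is a combinatorial \emph{permutation lemma}: if $z_1\le\cdots\le z_n$ and $\nu$ is any permutation such that $(z_{\nu^{-1}(i)}+y_{\nu^{-1}(i)})_i$ is ordered, then $\sum_i(z_i-z_{\nu(i)})^2\le 4\sum_i(y_i^2\wedge m^2)$ with $m=z_n-z_1$. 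Applied with $y_i=\sigma^{-1}\sqrt n\,\Delta^n_iY$ and $m=M_n$ (bounded by $3\sqrt{\log n}$ with high probability), this yields $\sum_i\hat\xi_{ni}^2\le 4n^{-p/2}\sum_i\big(n\sigma^{-2}(\Delta^n_iY)^2\wedge 9\log n\big)$, and then $n^{2-p/2}\e\big(Y_{1/n}^2\wedge n^{-1}\log n\big)\to 0$ for $p>\beta^*$ (or $p=2$) via a L\'evy--It\^o truncation estimate. This lemma is where the rate $(2-p)/4$ actually emerges; the bound $O_\p(N_n\cdot n^{-1/2}\sqrt{\log n})$ you propose is neither justified nor sharp enough to deliver it.
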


It is noted that the final statement of Theorem~\ref{thm:gral_rate} implies that 
with some positive probability the quantity on the left hand side of~\eqref{eq:thm}
becomes arbitrarily large for some large~$n$. Thus we establish the exact 
convergence rate in the logarithmic sense when $\beta_*=\beta^*$ and, 
in particular, this rate is $n^{-(2-\a)/4}$ in the regularly varying 
case~\eqref{eq:RValpha}. Note as well that Theorem~\ref{thm:gral_rate} also 
implies the convergence of the bivariate approximation in~\eqref{eq:joint} 
with exactly the same rate. The convergence in~\eqref{eq:joint} requires 
access to the parameter~$\sigma$. However, this parameter may be 
estimated first without deteriorating the resulting convergence speed 
in~\eqref{eq:joint}. For instance, according to~\cite{MR2394762} 
(see also~\cite{jacod_reiss}), one can construct an estimator of~$\sigma$  
based on threshold filters (analogous to our Method II) with error decay 
$\Oh_\p(n^{-1/2})$ if $\beta^*<1$ and, otherwise, 
$\Oh_\p(n^{-(2-p)/2})$ for any $p\in(\beta^*,2]$. 


\subsection{Method I. Extensions}
\label{subsec:extensions}

\subsubsection{Dynamics under a dominating probability measure}
\label{subsec:ext-1}
It suffices to let $W$ be a Brownian motion independent of the pure-jump 
L\'evy process $Y$ under some probability measure $\Q$ dominating $\p$, 
that is, $\Q\gg\p$. Indeed, in that case the limit in Theorem~\ref{thm:gral_rate} 
holds under $\Q$ and thus, under $\p$. For example, by Girsanov's theorem, 
we may consider $W_t=B_t+\int_0^t f(s)\D s$, $t\in[0,1]$, where $f$ 
is an adapted process satisfying $\int_0^1 f(t)^2\D t<\infty$ 
and $B$ is a standard Brownian motion under $\p$. 
In fact, the process~$Y$ may also be a rather general pure-jump 
semimartingale under $\p$, possibly dependent on~$W$, 
see~\cite[Thm~2.3]{MR2083711} 
and~\cite[Thm~III.3.24, p.~172]{MR1943877}, but also Proposition~\ref{prop:CPP} below. 

\subsubsection{Extensions under exchangeability}
\label{subsec:ext-2}
Further generalisations are possible. Namely, the convergence 
in~\eqref{eq:thm} is guaranteed for any process $Y$ (possibly 
dependent on the Brownian motion $W$) such that the increments 
of the bivariate process $(W,Y)$ are exchangeable and $Y$ satisfies:
\[
n^{2-p/2}\e\left(\left(\Delta^n_1Y\right)^2
\wedge \frac{\log n}{n}\right)\to 0.
\]

It would be interesting to understand if exchangeability can be replaced 
by another structural assumption. One way is to ensure 
that~\eqref{eq:toprove} below is sufficient for the corresponding partial 
sums to vanish. In this regard we point out that a martingale 
assumption~\cite[Eq.~(2.2.35)]{jacod} seems to be of no immediate use 
because of inherent reorderings. 

\subsubsection{Multidimensional case}
Our method readily applies in a multivariate setting, where $Y$ is an 
$\R^d$-valued pure-jump \levy process, $W$ is an independent 
$d$-dimensional Brownian motion with standard but possibly correlated 
components and $\sigma$ is a diagonal scaling matrix. Indeed, 
by applying the decomposition procedure to every component of $X$ 
we may recover the entire path of the $d$-dimensional bridge 
$(W_t-t W_1)_{t\in[0,1]}$. One may use a single one-dimensional 
standard Brownian motion $W'$ for every component of~$X$, 
which may seem counter-intuitive as the dependence structure across 
coordinates is captured by the reorderings of increments.
In a degenerate case when the rank of the correlation matrix (suppose it is known)  is smaller 
than~$d$, we may use appropriate directions to reduce the number of 
required one-dimensional reconstructions to the given rank. 
Finally, we point out that the generalisations discussed above still 
apply in this context, see \S\ref{subsec:ext-1} in particular.

\subsection{Method I. Further results}
\label{subsec:Rd}
We have a more precise result in the case when $Y$ is a general piecewise 
constant process, including the compound Poisson process case. 
Note that one may always add a linear drift to $Y$ since this does not 
affect $W^\n$. The following result is stated for the discrete skeleton, 
since it may fail otherwise as the maximal deviation of $W$ from its 
discrete skeleton is of the same order:
\begin{equation}\label{eq:Wskeleton}
\sup_{t\in[0,1]}\big|W_t-W_{\lfloor tn\rfloor/n}\big|
=\Theta_\p\big(\sqrt{\log n/n}\big),
\end{equation}
meaning that the function on the right-hand side is, up to multiplicative constants, 
both an upper and lower asymptotic bound for the left-hand side. 
Throughout, the space $\mathcal D[0,1]$ of right-continuous functions 
with left-hand limits on $[0,1]$ is endowed with the standard Skorohod 
$J_1$-topology~\cite[Ch.\ 16 and A2]{MR1876169}. 
\begin{prop}\label{prop:CPP}
Let $(Y_t)_{t\in[0,1]}$ be a piecewise constant process 
(not necessarily L\'evy) independent of $W$ with jumps $J_1,\ldots,J_N$ 
at times $T_1,\ldots,T_N\in(0,1)$. Then the limit 
\[
\sqrt{\frac{n}{2\log n}}\left(W_{\lfloor tn\rfloor/n}-W_t^\n-(W_1-W^\n_1)t\right)
	\cip \sum_{i=1}^N \sign(J_i)(t-\1{T_i\leq t})
\]
holds in $\mathcal D[0,1]$.
\end{prop}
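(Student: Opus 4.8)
The plan is to isolate the effect of the finitely many jumps of $Y$ on the reordering permutation $\pi$, and to show that all other contributions are of lower order. Write $\Delta^n_i X = \sigma\Delta^n_i W + \Delta^n_i Y$, and observe that for $n$ large enough (so that no two jumps fall in the same subinterval), exactly $N$ of the increments $\Delta^n_i X$ contain a jump: for the interval containing $T_k$ we have $\Delta^n_i X \approx J_k + \sigma\Delta^n_i W$, which for large $n$ is the $k$-th largest (if $J_k>0$) or $k$-th smallest (if $J_k<0$) among all increments, while the remaining $n-N$ increments are just $\sigma\Delta^n_i W$ plus a vanishing error. Consequently the permutation $\pi$ sends the rank positions at the very top and very bottom of the order statistics to the jump-carrying indices, and acts on the bulk essentially as it would for the driftless Brownian case. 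The key quantitative input is that the $k$-th largest of $n$ i.i.d.\ standard normals is of order $\sqrt{2\log n}$, so placing $\Delta^n_{\pi(i)}W'$ — a single normal increment of size $\Theta_\p(\sqrt{\log n/n})$ — at the index of the $k$-th order statistic of $X$ contributes, to the cumulative sum $W^\n$, a jump of size $\approx \pm\sqrt{2\log n/n}$ at the location of $T_k$. After centering by $W^\n_1 t$ (which removes the net sum of these $N$ contributions times $t$), this produces exactly $\sum_{i=1}^N\sign(J_i)(t-\1{T_i\le t})$ once rescaled by $\sqrt{n/(2\log n)}$.

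Concretely I would proceed as follows. First, fix $p\in(\beta^*,2]$ and apply Theorem~\ref{thm:gral_rate} to the auxiliary L\'evy process $\sigma W$ (with $Y\equiv 0$), which gives $n^{(2-p)/4}\sup_t|W_t-\widetilde W^\n_t-(W_1-\widetilde W^\n_1)t|\cip 0$, where $\widetilde W^\n$ is the reordering of $W'$ along the pure-Brownian increments $\sigma\Delta^n_\cdot W$; in particular $\sqrt{n/(2\log n)}$ times this quantity vanishes. Second, compare $\pi$ with the analogous permutation $\widetilde\pi$ for the pure-Brownian increments: they differ only in how the $N$ extreme rank positions are matched, and I would show that removing the jump-carrying increments and re-inserting them changes the partial-sum process $W^\n$ from $\widetilde W^\n$ by a controllable amount. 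Third, identify the resulting change: the $N$ increments of $W'$ that get bumped from the extreme positions to near-median positions (and vice versa) each have size $\Theta_\p(\sqrt{\log n/n})$ for the ones leaving the bulk, but size $\pm(1+o_\p(1))\sqrt{2\log n/n}$ for the ones arriving at the extreme positions, and it is precisely the latter that survive the $\sqrt{n/(2\log n)}$ scaling. The displaced bulk increments, being only $O(\sqrt{\log n/n})$ each and $N$ in number, contribute $O_\p(N\sqrt{\log n/n})$ which is $o_\p(\sqrt{\log n/n})\cdot\sqrt{n}$... more carefully, after scaling they contribute $O_\p(N/\sqrt n)\to 0$. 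Fourth, pin down the convergence in $\mathcal D[0,1]$: the $k$-th contribution is a step of height $-\sqrt{2\log n/n}\,\sign(J_k)$ occurring at the grid point $\lfloor nT_k\rfloor/n\to T_k$ — wait, the sign: if $J_k>0$ then the jump-increment of $X$ is near the top of the order statistics, so $\pi$ sends that top rank to the jump index, meaning $\Delta^n_{\pi(i)}W'$ for $i$ at the jump location is the largest normal, $\approx+\sqrt{2\log n/n}$; after subtracting $W^\n_1t$ one gets the process $\sign(J_k)(\1{T_k\le t}-t)$ scaled, matching the claimed formula up to the overall sign convention, which I would reconcile carefully. Convergence of the grid locations and the Gaussian order statistics gives convergence of each summand in $J_1$, and since the limit is a finite sum of jump functions at distinct times $T_k\in(0,1)$ with no accumulation, joint $J_1$-convergence follows from the continuous-mapping/addition argument in~\cite[A2]{MR1876169}.

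The main obstacle, I expect, is Step~two and Step~three together: rigorously controlling the difference between the reordered process $W^\n$ and its pure-Brownian counterpart $\widetilde W^\n$ when only $N=O(1)$ rank positions are perturbed, while tracking the precise $\sqrt{2\log n}$ asymptotics of the extreme normal order statistics of $W'$ \emph{uniformly enough} to get convergence in probability (not merely in distribution) of the rescaled process. One delicate point is that $\widetilde\pi$ and $\pi$ are both built from the \emph{same} realization of $W'$, so the extreme order statistics of $W'$ that move into the jump slots are genuinely random of exact order $\sqrt{2\log n}(1+o_\p(1))$, and one must verify that the $o_\p(1)$ error, after multiplication by $\sqrt{n/(2\log n)}\cdot\sqrt{2\log n/n}=1$, still tends to zero; this requires the second-order asymptotics of Gaussian order statistics, e.g.\ that the top-$k$ normals are $\sqrt{2\log n}-(\log\log n+\log 4\pi)/(2\sqrt{2\log n})+o_\p(1/\sqrt{\log n})$ and in particular concentrate around $\sqrt{2\log n}$ with $o_\p(1)$ fluctuations on that scale. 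A further subtlety is ensuring that the \emph{positions} to which the displaced bulk increments of $W'$ are reassigned under $\pi$ are close (in the sense of not creating spurious macroscopic jumps) — but since those increments are each only $O_\p(\sqrt{\log n/n})$, even a worst-case relocation contributes $o_\p(1)$ after scaling, so this is a minor point once phrased correctly. Everything else — the behaviour of the $n-N$ non-jump increments, and the reduction to the pure-Brownian case via Theorem~\ref{thm:gral_rate} — is routine.
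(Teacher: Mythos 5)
Your high-level picture is the same as the paper's: the jump-carrying increments of $X$ land in the extreme rank positions, so they receive the extreme order statistics of $W'$, which are $\pm\sqrt{2\log n}(1+o_\p(1))$ after rescaling by $\sqrt n$; this produces the steps $\sign(J_i)(t-\1{T_i\le t})$ (and your worry about second-order Gaussian extreme-value asymptotics is unnecessary — since the normalisation is exactly $(2\log n)^{-1/2}$, the first-order fact $Z^{(n)}-\sqrt{2\log n}\cip 0$ suffices). However, two of your reduction steps have genuine gaps. First, Step one does not work as stated: Theorem~\ref{thm:gral_rate} applied to the pure-Brownian case gives $n^{(2-p)/4}\sup_t|\cdot|\cip 0$ only for $p>0$ strictly, and $n^{(2-p)/4}$ is asymptotically \emph{smaller} than $\sqrt{n/(2\log n)}$ for every fixed $p>0$; so the theorem does not imply that the pure-Brownian reordering error vanishes at the $\sqrt{n/\log n}$ scale. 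One needs the sharper $\log\log n$ control of Lemma~\ref{lem:normal} (resting on the del Barrio--Gin\'e--Utzet law of the iterated logarithm for $\|\tilde F_n\|_2^2$), which shows the quadratic variation of that term is $O_\p(\log\log n/\log n)\to 0$; this is exactly how the paper handles the $\tilde\xi_{ni}$ part.

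Second, your treatment of the bulk is not correct as reasoned. Inserting the $N$ jump increments at the extremes shifts the rank of \emph{every} one of the $n-N$ remaining indices by up to $N$ positions, and your claim that ``even a worst-case relocation contributes $o_\p(1)$ after scaling'' because the relocated increments are $O_\p(\sqrt{\log n/n})$ is false: moving a single increment of size $c\sqrt{\log n/n}$ from one time to another changes the partial-sum process by that amount on the whole intervening interval, which is $\Theta(1)$ after multiplication by $\sqrt{n/(2\log n)}$. What actually saves the argument is structural: in sorted order $\nu$ displaces each bulk index by a \emph{fixed} amount bounded by $N$ between consecutive jump indices, so the displacements telescope, giving $\sum_{i\notin I}|Z_i-Z_{\nu(i)}|\le N(Z^{(n)}-Z^{(1)})$ and hence $\sum_{i\notin I}\hat\xi_{ni}^2\le N\Delta_n(Z^{(n)}-Z^{(1)})/(2\log n)\cip 0$ with $\Delta_n$ the maximal spacing; the paper then upgrades control of the total sum and sum of squares to uniform control of the partial-sum process via exchangeability of $(\hat\xi_{ni})_{i\notin I}$ (Lemma~\ref{lem:exch}) and Kallenberg's theorem on exchangeable-increment processes. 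Without either the telescoping bound or the exchangeability step, your Steps two and three do not close.
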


Interestingly, only the signs of the jump sizes $J_i$ appear in the limit.
In the purely Brownian case ($Y=0$) the effective convergence rate is 
$\sqrt{\log \log n/n}$, see Appendix~\ref{sec:normal}.

The final result covers the case $\sigma=0$. That is, we apply our 
procedure for a L\'evy process without Gaussian part: $X=Y$. 
Interestingly, this `coupling' yields, in the limit,  a Brownian motion 
independent of $X$. We believe that the following result is true even 
in the highest activity  case $\beta^*=2$, but its proof seems to 
require a more careful analysis. 
\begin{prop}\label{prop:nosigma}
If $\sigma=0$ and $\beta^*<2$, then the distributional convergence 
\[
(X_t,W^\n_t)\convd (X_t,B_t)
\]
holds in $\mathcal D[0,1]$, where $B$ is a standard Brownian motion 
independent of~$X$. 
\end{prop}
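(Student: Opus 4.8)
The plan is to exploit the exchangeable structure of the construction together with Kallenberg's canonical representation of processes with interchangeable increments, the decisive point being that reordering preserves the total increment and that a pure–jump process carries no Gaussian part.

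First I would record that the bivariate grid sequence $\big(\Delta^n_iX,\Delta^n_iW^\n\big)_{i=1}^n$ is exchangeable: the construction is permutation–equivariant (permuting the increments of $X$ permutes those of $W^\n$ by the same permutation) and the increments of $X$ are i.i.d. (When $X$ has finite activity its increments carry an atom at the drift value; one then orders the increments of $X$ lexicographically against an independent i.i.d.\ sequence of uniforms, which restores the uniformity of the ordering permutation and leaves the exchangeability intact after projecting out the uniforms.) Consequently the step process $\big(X_{\lfloor nt\rfloor/n},W^\n_t\big)_{t\in[0,1]}$ has interchangeable increments for every $n$. Its marginals converge, $X_{\lfloor n\cdot\rfloor/n}\to X$ in $\mathcal D[0,1]$ (discretisation of a càdlàg path) and $W^\n\convd B$ (Donsker, $W^\n$ being a Brownian random walk as noted above), so the pair is tight, and any subsequential limit is of the form $(X,\mathcal W)$ with interchangeable increments and $\mathcal W\eqd B$.

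Now I would invoke Kallenberg's representation: a continuous–in–probability $\R^2$–valued process with interchangeable increments on $[0,1]$ equals $t\alpha+\Sigma^{1/2}\beta_t+\sum_j\rho_j\big(\1{\tau_j\le t}-t\big)$, with $\beta$ a standard planar Brownian bridge and $(\tau_j)$ i.i.d.\ uniform, jointly independent of the directing triple $(\alpha,\Sigma,(\rho_j))$ (and $\sum_j|\rho_j|^2<\infty$). Applied to $(X,\mathcal W)$: since $\mathcal W$ is a standard Brownian motion it has no jumps and unit Gaussian rate, so $\rho_j^{(2)}=0$ and $\Sigma_{22}=1$; since $X$ is pure–jump, its Gaussian coefficient vanishes, $\Sigma_{11}=0$, whence $\Sigma_{12}=0$. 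Thus the planar bridge degenerates to $(0,\beta^{(2)})$, giving $\mathcal W_t=t\alpha_2+\beta^{(2)}_t$ and $X_t=t\alpha_1+\sum_j\rho_j^{(1)}(\1{\tau_j\le t}-t)$, and the \emph{only} dependence permitted by the representation is between $\alpha_1=X_1$ and $\alpha_2=\mathcal W_1$. At this point the decisive fact enters: reordering preserves the total sum, $W^\n_1=W'_1$, and $W'_1$ is independent of $X$; hence $(X,W^\n_1)=(X,W'_1)$ does not depend on $n$, so in the limit $(X,\alpha_2)\eqd(X,W'_1)$ with $\alpha_2$ independent of $X$. Combining this with the independence of $\beta^{(2)}$ and $(\tau_j)$ from the directing triple, $X\perp\mathcal W$, i.e.\ $\mathcal W\eqd B$ is a standard Brownian motion independent of $X$. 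Since all subsequential limits agree, $(X_{\lfloor n\cdot\rfloor/n},W^\n)\convd(X,B)$ in $\mathcal D[0,1]$, which (replacing the discretised $X$ by $X$, harmless in $J_1$) is the assertion.

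I expect the main obstacle to be the careful application of Kallenberg's theorem to the bivariate limit — confirming in particular that a pure–jump Lévy process contributes no Gaussian component to its canonical representation, so that the planar bridge truly degenerates and the two coordinates decouple apart from the endpoints — together with the bookkeeping for ties in the finite–activity case. A more computational alternative would bypass Kallenberg: one truncates $X=X^\epsilon+X_\epsilon$ into its large jumps $X^\epsilon$ (independent of the bounded–jump remainder $X_\epsilon$ and of $W'$), shows $\sup_t|W^\n_t(X)-W^\n_t(X_\epsilon)|=O_\p(\sqrt{\log n/n})$ because deleting the finitely many large increments moves only $O(1)$ ranks and shifts the rest by $O(1)$ while the normal order statistics have range $O_\p(\sqrt{\log n/n})$ (Appendix~\ref{sec:normal}), approximates $W^\n(X_\epsilon)$ by the bridge of the partial sums of the i.i.d.\ $N(0,1)$ variables $\widetilde Z_i:=\Phi^{-1}(F(\Delta^n_iX_\epsilon))$ plus $\tfrac{\lfloor n\cdot\rfloor}{n}W'_1$ (a rank–statistics step in the spirit of Hájek, using uniform convergence of the normal quantile function), and finally uses $|\mathrm{Cov}(\Delta^n_1X_\epsilon,\widetilde Z_1)|\le\|\Delta^n_1X_\epsilon\|_2=O\big(\sqrt{n^{-1}\!\int_{|x|\le\epsilon}x^2\,\Pi(\D x)}\big)$ to obtain the asymptotic independence of $X_\epsilon$ and that partial–sum process.
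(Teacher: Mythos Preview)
Your argument is correct but takes a genuinely different route from the paper's. The paper applies Kallenberg's \emph{limit theorem} for exchangeable arrays \cite[Thm~3.13]{MR2161313} directly to the bivariate triangular array $(\Delta_i^nX,\Delta_i^nW^\n)$; after noting that the marginals converge, the only remaining condition is that the cross--variation vanishes, $\sum_{i\le n}(\Delta_i^nX)(\Delta_i^nW^\n)\cip 0$. This is obtained from $\max_i|\Delta_i^nW'|=\Oh_\p(\sqrt{\log n/n})$ together with Lemma~\ref{lem:levy}(c), i.e.\ $\sqrt{n\log n}\,\e(|X_{1/n}|\wedge 1)\to 0$, and it is precisely here that the hypothesis $\beta^*<2$ enters. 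Your approach instead establishes tightness of the pair (marginal tightness plus $C$--tightness of $W^\n$), passes to subsequential limits, and invokes Kallenberg's \emph{representation theorem} for the limiting exchangeable--increment process: the pure--jump nature of $X$ forces $\Sigma_{11}=0$, hence $\Sigma_{12}=0$ by positive semidefiniteness, and the identity $W^\n_1=W'_1\perp X$ forces $\alpha_2\perp X$, whence $\mathcal W=\alpha_2 t+\beta^{(2)}_t$ is independent of~$X$. One small imprecision: ``the only dependence permitted is between $\alpha_1$ and $\alpha_2$'' understates what you need, since $X$ also depends on $(\rho_j^{(1)})$; but your actual argument shows the stronger fact $\alpha_2\perp X$ (not merely $\alpha_2\perp\alpha_1$), and combined with $\beta^{(2)}\perp(\alpha,\Sigma,\rho,\tau)$ this does give $\mathcal W\perp X$. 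The paper's route is shorter once the cross--variation estimate is in hand; your structural route is more conceptual and, notably, does not appear to use $\beta^*<2$ anywhere --- if the details of the representation step are carried through carefully, this would cover the case the paper explicitly leaves open.
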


\subsection{Method I. Numerical illustration}
\label{subsec:num1}
We conclude the discussion of Method I with numerical illustrations of 
Theorem~\ref{thm:gral_rate} and Proposition~\ref{prop:CPP}. 
For this example we assume $\sigma=1$, $W$ is a 3-dimensional Bessel 
process (a Brownian motion under an equivalent probability measure) 
and $Y$ is an independent strictly $\a$-stable process. We consider 
various values of $\a$ and, as the other parameters are less relevant, 
we fix the skewness parameter at $\beta=0.5$ and take unit scale. 
We work with 5 approximation levels $n=10^3,10^4,\ldots,10^7$ and 
for each scenario we compute the maximal difference between the 
discretised bridge on a uniform grid of $N=10^7$ points and its 
approximation at level $n$: 
\begin{equation}\label{eq:error}
\sup_{i\leq N}\Big|W_{i/N}-\tfrac{i}{N}W_1
	-\big(W_{i/N}^\n-\tfrac{i}{N}W^\n_1\big)\Big|.
\end{equation}

We point out that we do not resample $W'$, i.e. we use the same path 
for each of the resolution levels $n$. We replicate the procedure $100$ 
times to estimate the expected value of the quantity in~\eqref{eq:error} 
and its standard deviation, see Table~\ref{table:results}. For the sake 
of comparison, we also take $n=1$, where $W^\n=W'$ 
and~\eqref{eq:error} quantifies the discrepancy between two 
independent bridges, resulting in $1.202(.3333)$.

\begin{table}[h]
	\begin{tabular}{c|c|c|c|c|c|c}
		& $\a=0.2$ & $\a=0.6$ & $\a=1$ & $\a=1.4$ & $\a=1.8$ & $\a=1.99$\\
		\hline
		$n=10^3$& .1590(.0430) & .1704(.0438) & .1912(.0598) & 
								.2330(.0533) & .3115(.0825) & .3399(.0786)\\
		$n=10^4$& .0626(.0175) & .0741(.0199) & .0980(.0241) & 
								.1468(.0415) & .2623(.0715) & .3061(.0793)\\
		$n=10^5$& .0243(.0086) & .0326(.0083) & .0536(.0137) & 
								.1002(.0306) & .2275(.0649) & .2958(.0793)\\
		$n=10^6$& .0090(.0033) & .0145(.0036) & .0290(.0079) & 
								.0704(.0221) & .2007(.0551) & .2905(.0811)\\
		$n=10^7$& .0031(.0017) & .0056(.0018) & .0153(.0043) & 
								.0501(.0160) & .1773(.0511) & .2886(.0821)
	\end{tabular} 
\vspace{0.2cm}
	\caption{Means (and standard deviations) of the errors given 
		by~\eqref{eq:error} when $Y$ is an $\a$-stable process.}
	\label{table:results}
\end{table}

\begin{figure}[h!]
	\begin{tikzpicture} 
	\begin{loglogaxis} 
	[
	ymin=.001,
	ymax=.5,
	xmin=1000,
	xmax=10000000,
	xlabel={\small $n$},
	ylabel={\small error},
	width=11cm,
	height=5.5cm,
	axis on top=true,
	axis x line=bottom, 
	axis y line=left,
	axis line style={->},
	x label style={at={(axis description cs:0.8,0.2)},anchor=north},
	legend style={at={(.2,.015)},anchor=south east}
	]
	
	\addplot[only marks, solid, mark=o, mark options={scale=.85, solid}, color=black,
	]
	coordinates {
		(1000.0,0.158987)(10000.0,0.0626097)(100000.0,0.0243297)
		(1000000,0.0090032)(10000000,0.003064)
	};
	\addplot [solid, samples = 1000, color=black, line width = .3, 
		domain=1000:10000000]
	{(x/100000)^(-(2-.2)/4)*0.0243297};
	
	\addplot[only marks, solid, mark=o, mark options={scale=.85, solid}, color=gray!140,
	]
	coordinates {
		(1000.0,0.170403)(10000.0,0.0740925)(100000.0,0.0326138)
		(1000000,0.0144522)(10000000,0.00561922)
	};
	\addplot [solid, samples = 1000, color=gray!140, line width = .3, 
		domain=1000:10000000]
	{(x/100000)^(-(2-.6)/4)*0.0326138};
	
	\addplot[only marks, solid, mark=o, mark options={scale=.85, solid}, color=gray!125,
	]
	coordinates {
		(1000.0,0.191159)(10000.0,0.0980456)(100000.0,0.0536214)
		(1000000,0.029043)(10000000,0.0153297)
	};
	\addplot [solid, samples = 1000, color=gray!125, line width = .3, 
		domain=1000:10000000]
	{(x/100000)^(-(2-1)/4)*0.0536214};
	
	\addplot[only marks, solid, mark=o, mark options={scale=.85, solid}, color=gray!110,
	]
	coordinates {
		(1000.0,0.232989)(10000.0,0.14678)(100000.0,0.100212)
		(1000000,0.0703617)(10000000,0.0500992)
	};
	\addplot [solid, samples = 1000, color=gray!110, line width = .3, 
		domain=1000:10000000]
	{(x/100000)^(-(2-1.4)/4)*0.100212};
	
	\addplot[only marks, solid, mark=o, mark options={scale=.85, solid}, color=gray!95,
	]
	coordinates {
		(1000.0,0.311508)(10000.0,0.262276)(100000.0,0.227457)
		(1000000,0.200695)(10000000,0.17725)
	};
	\addplot [solid, samples = 1000, color=gray!95, line width = .3, 
		domain=1000:10000000]
	{(x/100000)^(-(2-1.8)/4)*0.227457};
	
	\addplot[only marks, solid, mark=o, mark options={scale=.85, solid}, color=gray!80,
	]
	coordinates {
		(1000.0,0.33991)(10000.0,0.306071)(100000.0,0.295824)
		(1000000,0.290505)(10000000,0.288612)
	};
	\addplot [solid, samples = 1000, color=gray!80, line width = .5, domain=1000:10000000]
	{(x/100000)^(-(2-1.99)/4)*0.295824};
	
	\legend {\small $\alpha=0.2$,,
		\small $\alpha=0.6$,,
		\small $\alpha=1.0$,,
		\small $\alpha=1.4$,,
		\small $\alpha=1.8$,,
		\small $\alpha=1.99$};
	\end{loglogaxis}
	\end{tikzpicture}
	\caption{\small Log-log plot of data in Table~\ref{table:results} with lines of slope $-(2-\a)/4$ depicting the theoretical rate in Theorem~\ref{thm:gral_rate}}
	\label{fig:log_table}
\end{figure}
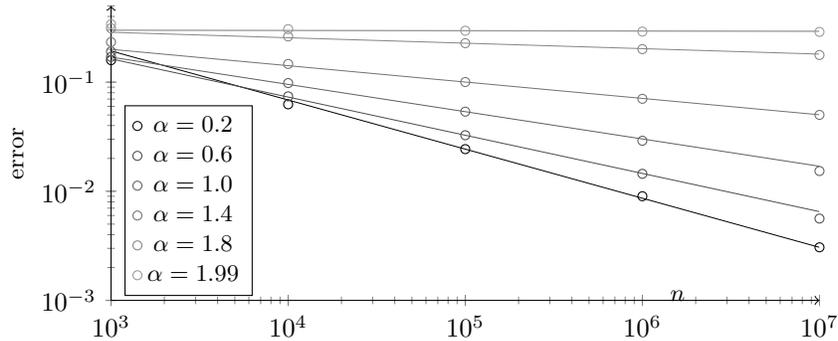
Figure~\ref{fig:log_table} provides the log-log plot together with 
lines corresponding to the theoretical rates given by 
Theorem~\ref{thm:gral_rate}. That is, the lines pass through the 
given value at $n=10^5$ and their slopes are given by $-(2-\a)/4$.

Next assume $\sigma$ and $W$ are as in the first paragraph of 
\S\ref{subsec:num1} and $Y$ is a Poisson process with intensity $3$. 
The Figure~\ref{fig:P} below exemplifies the limit established in 
Proposition~\ref{prop:CPP} above. Note how the signs of the jumps 
in the limit are opposite to those of $Y$.

\begin{figure}[h]
	\begin{center}
		\includegraphics[width=0.99\textwidth]{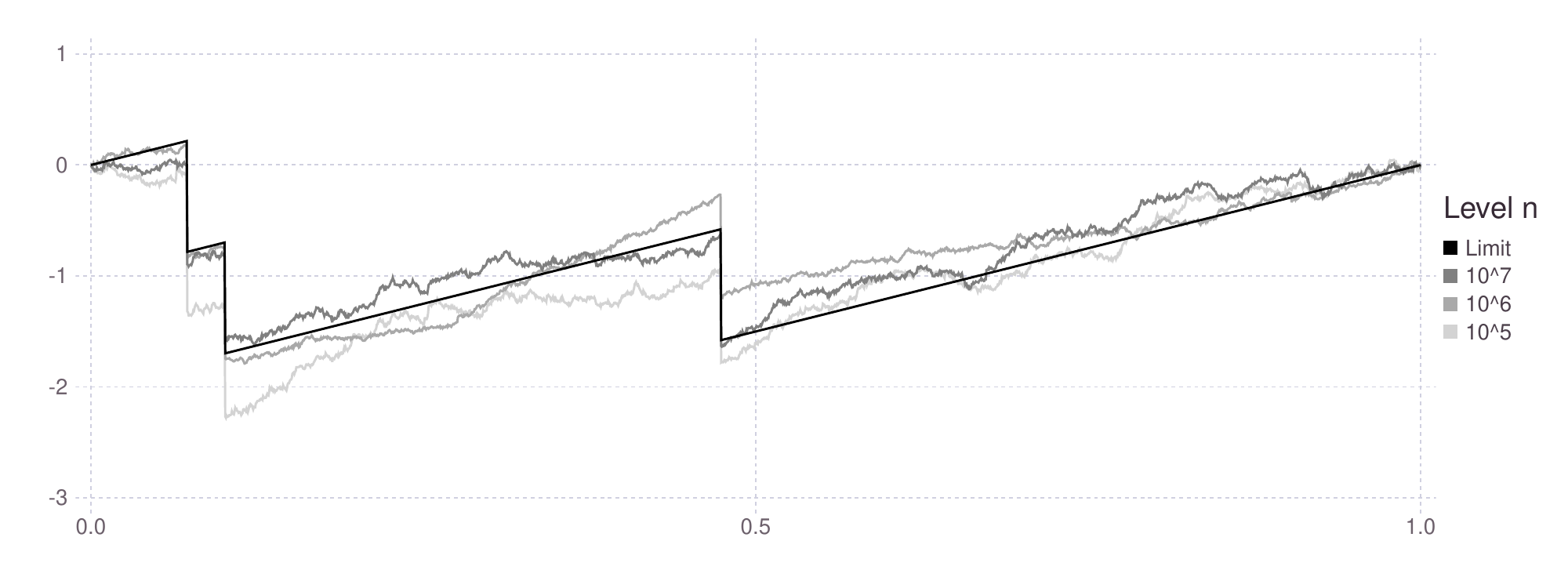}
	\end{center}
	\caption{\small Convergence in Proposition~\ref{prop:CPP} 
		for the approximation levels $n\in\{10^5,10^6,10^7\}$.}
	\label{fig:P}
\end{figure}

\subsection{Method II}
Finally, we turn our attention to the filtering method described 
in \S\ref{sec:meth2}. Here, the main approximant is the process 
$\hat W_t^\n$ defined in~\eqref{eq:thresh_def}.

\begin{theorem}
\label{thm:tunc_method}
For any $p\in(\beta^*,2]$ it holds that
\begin{equation}
\label{eq:trunc}
n^{(2-p)/4}\sup_{t\in [0,1]}
	\big|W_t-\hat W_t^\n-(W_1-\hat W_1^\n)t\big|\cip 0,
\end{equation}
assuming that the sequence $a_n$ satisfies 
\begin{equation}\label{eq:a_n}
\liminf_{n\to\infty}\frac{na^2_n}{\sigma^2\log n}\geq 2-\beta^*
\qquad\text{and}\qquad
n^{1/2-\epsilon}a_n\to 0\quad\text{ for all } \epsilon>0.
\end{equation}
The limit~\eqref{eq:trunc} also holds for $p=2$ and any 
$\beta^*$ if $a_n\to 0$ and 
$\liminf_{n\to\infty}na^2_n/\log n>0$. 
\end{theorem}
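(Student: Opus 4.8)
The plan is to reduce the bridge quantity in~\eqref{eq:trunc} to the suprema of two centred random walks and then to estimate their second moments in terms of $\beta^*$ and of the thresholds $a_n$. Decomposing $\Delta^n_iX=\sigma\Delta^n_iW+\Delta^n_iY$ and splitting the filter gives
\[
\hat W^\n_t=W_{\lfloor tn\rfloor/n}-B^\n_t+\sigma^{-1}A^\n_t,\qquad
A^\n_t:=\sum_{i\le tn}\Delta^n_iY\,\1{|\Delta^n_iX|\le a_n},\quad
B^\n_t:=\sum_{i\le tn}\Delta^n_iW\,\1{|\Delta^n_iX|>a_n},
\]
whence $W_t-\hat W^\n_t-(W_1-\hat W^\n_1)t=(W_t-W_{\lfloor tn\rfloor/n})-\sigma^{-1}(A^\n_t-tA^\n_1)+(B^\n_t-tB^\n_1)$. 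Since $Y$ and $W$ are independent \levy processes, the pairs $(\Delta^n_iY,\Delta^n_iW)$, $i=1,\dots,n$, are i.i.d., so $A^\n$ and $B^\n$ are random walks; for such a process $S^\n_t=\sum_{i\le tn}\zeta_i$ one has, at the grid points, $S^\n_t-tS^\n_1=\tilde S^\n_t-t\tilde S^\n_1$ with $\tilde S^\n$ the centred version (the drift cancels automatically), hence $\sup_t|S^\n_t-tS^\n_1|\le 2\sup_t|\tilde S^\n_t|$, and Doob's $L^2$ inequality gives $\e[\sup_t|\tilde S^\n_t|^2]\le 4n\,\e[\zeta_1^2]$. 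Together with~\eqref{eq:Wskeleton}, which kills the discretisation term $W_t-W_{\lfloor tn\rfloor/n}$ after the scaling for every $p\le2$, and with the negligible $\Oh_\p(a_n+n^{-1/2})$ corrections from replacing $t$ by $\lfloor tn\rfloor/n$, the theorem reduces to
\[
n^{(4-p)/2}\,\e\big[(\Delta^n_1Y)^2\1{|\Delta^n_1X|\le a_n}\big]\to0
\qquad\text{and}\qquad
n^{(4-p)/2}\,\e\big[(\Delta^n_1W)^2\1{|\Delta^n_1X|>a_n}\big]\to0,
\]
with $n^{(4-p)/2}$ replaced by $n$ in the final statement of the theorem.

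For the jump-leakage moment I would split according to whether $|\Delta^n_1Y|\le 2a_n$. On $\{|\Delta^n_1Y|\le2a_n\}$ the contribution is at most $\e[Y_{1/n}^2\wedge(2a_n)^2]$, which by the L\'evy--It\^o decomposition is $\lesssim \tfrac1n\int_{|x|\le2a_n}x^2\Pi(\D x)+\tfrac1n a_n^2\ov\Pi(2a_n)+\tfrac1{n^2}(\gamma^{(2a_n)})^2$, with $\gamma^{(2a_n)}$ the drift after truncating the jumps at level $2a_n$. On $\{|\Delta^n_1Y|>2a_n,\ |\Delta^n_1X|\le a_n\}$ one has $\sigma|\Delta^n_1W|\ge|\Delta^n_1Y|-|\Delta^n_1X|>a_n$ and $|\Delta^n_1Y|<2\sigma|\Delta^n_1W|$, so this part is $\le 4\sigma^2\,\e[(\Delta^n_1W)^2\1{\sigma|\Delta^n_1W|>a_n}]$, a purely Gaussian tail treated below. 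For any $p'\in(\beta^*,p)$ one has $\int_{|x|\le\epsilon}x^2\Pi(\D x)\le C_{p'}\epsilon^{2-p'}$, $\ov\Pi(\epsilon)\le C_{p'}\epsilon^{-p'}$ and $\int_{\epsilon<|x|\le1}|x|\Pi(\D x)\le C_{p'}\epsilon^{1-p'}$ (the last relevant only when $\beta^*\ge1$; when $\beta^*<1$ the drift $\gamma^{(2a_n)}$ stays bounded), all immediate from the definition of $\beta^*$. Multiplying by $n^{(4-p)/2}$ and using $n^{1/2-\epsilon}a_n\to0$ together with $a_n\gtrsim\sqrt{\log n/n}$ (valid when $\beta^*<2$, by the first part of~\eqref{eq:a_n}), each resulting term is $n^{-c}$ with $c>0$. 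For the final statement one instead uses $a_n\to0$ and dominated convergence, $\int_{|x|\le2a_n}x^2\Pi(\D x)\to0$, which needs no assumption on $\beta^*$.

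For the Brownian-leakage moment the point is to extract the \emph{sharp} exponent. Fixing a small $\theta\in(0,1)$ and conditioning on $\Delta^n_1Y$: on $\{|\Delta^n_1Y|\le\theta a_n\}$ the event $\{|\Delta^n_1X|>a_n\}$ forces $\sigma|\Delta^n_1W|>(1-\theta)a_n$, so that contribution is $\le\tfrac1n\,\e[Z^2\1{|Z|>(1-\theta)\sqrt n a_n/\sigma}]\lesssim \tfrac1n\,(na_n^2)^{1/2}\,n^{-(1-\theta)^2na_n^2/(2\sigma^2\log n)}$ by the Gaussian Mills bound; on $\{|\Delta^n_1Y|>\theta a_n\}$ it is $\le\tfrac1n\,\p(|Y_{1/n}|>\theta a_n)\lesssim n^{-2}(\theta a_n)^{-p'}$ by the estimates of the previous paragraph (the same $\p(|Y_{1/n}|>\theta a_n)\to0$ holds for $\beta^*=2$ using $\ov\Pi(\theta a_n)\lesssim (\theta a_n)^{-2}$ and $na_n^2\gtrsim\log n$). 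After multiplying by $n^{(4-p)/2}$, the second term vanishes exactly as the jump-leakage ones, and (using $(na_n^2)^{1/2}=o(n^\epsilon)$ for all $\epsilon>0$) the first is at most $n^{(2-p)/2+\epsilon-(1-\theta)^2na_n^2/(2\sigma^2\log n)}$. By the first condition in~\eqref{eq:a_n}, $(1-\theta)^2na_n^2/(\sigma^2\log n)$ eventually exceeds $(1-\theta)^2(2-\beta^*-\delta)$ for any $\delta>0$, and since $p>\beta^*$ one may pick $\theta,\delta,\epsilon$ small enough that the exponent is negative. This is precisely where the constant $2-\beta^*$ in~\eqref{eq:a_n} is required and where it dovetails with the hypothesis $p>\beta^*$. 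For the final statement ($p=2$, arbitrary $\beta^*$) only $\liminf_n na_n^2/(\sigma^2\log n)>0$ is used: the Gaussian exponent is then a fixed negative power of $n$, so even $n$ times the expression tends to $0$, and $\p(|Y_{1/n}|>\theta a_n)\to0$ follows because $na_n^2\gtrsim\log n\to\infty$.

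The step I expect to be the main obstacle is the sharp Gaussian-tail bound in the Brownian-leakage term: a naive estimate that splits $\{|\Delta^n_1X|>a_n\}$ at level $a_n/2$ loses a factor $4$ in the tail exponent and would only prove the theorem for $p$ close to $2$; the conditioning on $\Delta^n_1Y$ with the shrinking cut-off $\theta a_n$ is what restores the correct constant, and passing $\theta\downarrow0$ jointly with $\delta,\epsilon\downarrow0$ requires care. A related, bookkeeping-type difficulty is that $a_n$ must simultaneously be large enough to suppress the Brownian leakage ($na_n^2\gtrsim(2-\beta^*)\log n$) and small enough to keep the jump leakage negligible ($a_n=o(n^{-1/2+\epsilon})$), a window that is non-empty precisely because $\beta^*<2$; keeping these two exponents balanced throughout the argument is what makes it delicate.
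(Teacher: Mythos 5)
Your proposal is correct and follows essentially the same route as the paper: after discarding the sub-grid fluctuation via~\eqref{eq:Wskeleton}, both arguments reduce the bridge supremum to the two second-moment conditions $n^{2-p/2}\e\big(Y_{1/n}^2\1{|X_{1/n}|\le a_n}\big)\to 0$ and $n^{2-p/2}\e\big(W_{1/n}^2\1{|X_{1/n}|>a_n}\big)\to 0$ (the paper's Lemma~\ref{lem:trunc1}), and both prove these by splitting each filtering event into a small-$|\Delta^n_1 Y|$ part controlled by L\'evy--It\^o/Blumenthal--Getoor estimates and a Gaussian-tail part whose exponent $na_n^2/(2\sigma^2)$ is matched against $(1-p/2)\log n$ exactly as you describe. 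The only deviations — Doob's $L^2$ maximal inequality for the centred walks in place of the paper's appeal to exchangeable-increment theory, a fixed cut-off $\theta a_n$ sent to zero afterwards in place of the paper's $b_n\downarrow 0$, and the pointwise domination $|\Delta^n_1Y|<2\sigma|\Delta^n_1W|$ on $\{|\Delta^n_1Y|>2a_n,\,|\Delta^n_1X|\le a_n\}$ in place of the paper's reduction to bounded jumps to secure $\e Y_{1/n}^2<\infty$ — are all valid and, if anything, slightly more elementary.
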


In words, the threshold $a_n$ should (roughly) be of order 
$n^{-1/2}$, but not smaller than $c\sqrt{\log n/n}$ for a certain 
constant $c>0$. The choice $a_n=n^{-1/2}\log n$, for example, 
satisfies~\eqref{eq:a_n} in all cases. Both upper and lower bounds 
on $a_n$ can be relaxed, but then the rate would deteriorate. 
The allowed relaxations of the bounds and the corresponding 
rates are stated in Lemma~\ref{lem:trunc1} of~\S\ref{sec:proofsII}. 
In some sense, the rate is much less sensitive to increasing $a_n$ 
than to decreasing it. 

Importantly, the same (logarithmic) rate can be obtained using 
filtering method with an appropriate~$a_n$. We expect, however, 
that the choice of such threshold for a finite~$n$ may have a serious 
impact on the quality of decomposition, see also~\cite{MR2510946}.
Our main Method I has no such issues.
Let us also supplement Theorem~\ref{thm:tunc_method} with the 
negative results showing that the stated threshold bounds and the 
convergence rates are optimal.

\begin{prop}\label{prop:neg}
The following statements are true
\begin{itemize}
\item[\normalfont(a)] If $p\in[0,\beta_*)$ then the limit~\eqref{eq:trunc} fails.
\item[\normalfont(b)] If 
$\liminf_{n\to\infty} na^2_n/(\sigma^2\log n)<2-\beta^*$ then 
the limit~\eqref{eq:trunc} fails for some $p\in(\beta^*,2]$. 
\item[\normalfont(c)] If $\Pi\ne0$, $a_n\to 0$ and 
$n^{1/2-\epsilon}a_n\not\to 0$ for some $\epsilon>0$ then 
the limit~\eqref{eq:trunc} fails for some $p\in(\beta_*,2]$.
\end{itemize}
\end{prop}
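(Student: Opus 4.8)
The plan is to treat the three negative statements by reduction to explicit lower bounds on the relevant partial sums, exploiting the fact that the statistic in~\eqref{eq:trunc} controls the drift-corrected difference $W-\hat W^\n$ at every $t$, and in particular at $t=1/2$ or on a well-chosen subinterval. Write $\hat W^\n_t-\sigma^{-1}W_t=\sigma^{-1}\bigl(\sum_{i\le tn}\Delta^n_i X\,\1{|\Delta^n_i X|\le a_n}-\sigma W_t\bigr)=\sigma^{-1}\bigl(\sum_{i\le tn}\Delta^n_i Y-\sum_{i\le tn}\Delta^n_i(X)\,\1{|\Delta^n_i X|>a_n}\bigr)$, so that up to the endpoint correction the error is governed by (i) the jump increments that survive the filter, i.e.\ the ``missed small jumps'' of $Y$ that are not truncated, and (ii) the large increments that are discarded, i.e.\ the ``over-truncation'' of the Brownian part together with genuine large jumps. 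Part~(a) corresponds to the small-jump activity being too high for the chosen exponent $p$; parts~(b) and~(c) correspond respectively to the threshold being too small (so that a macroscopic fraction of Brownian increments gets killed) or too large (so that small jumps near the threshold accumulate).

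For part~(a), I would argue exactly as for the necessity direction of Theorem~\ref{thm:gral_rate}: by definition of $\beta_*$, for $p<\beta_*$ there is $\delta>0$ and a sequence $x_k\downarrow 0$ with $x_k^p\ov\Pi(x_k)\ge\delta$. Along the subsequence $n_k=\lceil x_k^{-2}\rceil$ (so $a_{n_k}\asymp n_k^{-1/2}\asymp x_k$ up to logarithmic factors and in particular $a_{n_k}\gg x_k$ fails to hold — one must be a little careful and instead pick $x_k$ comparable to but below $a_{n_k}$, using the regularity in the definition of $\beta_*$), the expected number of increments of $Y$ carrying a jump of size in $(x_k,2x_k)$ that is \emph{not} truncated is of order $n_k\,\ov\Pi(x_k)\ge \delta\, n_k x_k^{-p}\ge \delta\, n_k^{1+p/2}$; a second-moment / Lévy–Khintchine computation shows these contribute a term to $\sup_t|W-\hat W^\n-\dots|$ that is $\Omega_\p(n_k^{-(p-p)/...})$ — concretely, of order at least a constant times $n_k^{-(2-p)/4}$ times a diverging factor, so after multiplying by $n^{(2-p)/4}$ the quantity does not go to $0$. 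The cleanest route is to isolate a single such surviving jump occurring in, say, $(0,1/2]$ and note it alone forces a deviation of order $x_k\asymp n_k^{-1/2}$ at $t=1/2$ on the event (of probability bounded away from $0$ along the subsequence) that at least one such jump occurs and is not accompanied by a compensating large Brownian increment; this already contradicts convergence to $0$ after scaling, since $n_k^{(2-p)/4}n_k^{-1/2}=n_k^{-p/4}$ need not vanish only when $p=0$, so for $p>0$ one instead uses that \emph{many} such jumps survive and their signed sum has standard deviation $\gtrsim \sqrt{n_k\ov\Pi(x_k)}\,x_k\gtrsim n_k^{-(p-2)/4}\cdot$(diverging), whence the scaled statistic diverges. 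Part~(a)'s case $p=0$ is trivial since then no scaling is applied and the error does not vanish whenever $\Pi\ne 0$.

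Part~(b): if $\liminf na_n^2/(\sigma^2\log n)=c<2-\beta^*$, pick $p\in(\beta^*,2-c)$ and a subsequence along which $na_n^2/(\sigma^2\log n)\le c'$ with $c<c'<2-p$. Then $a_n\le \sigma\sqrt{c'\log n/n}$, and among the $\lfloor n/2\rfloor$ scaled Brownian increments $\sigma\Delta^n_iW/\sqrt{1/n}\sim N(0,\sigma^2)$ on $(0,1/2]$, the expected number exceeding $a_n\sqrt n\le \sigma\sqrt{c'\log n}$ in absolute value is, by the Gaussian tail bound, of order $n\cdot n^{-c'/2}=n^{1-c'/2}\to\infty$ (note $c'<2$). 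Discarding such an increment removes $\approx a_n\asymp \sqrt{\log n/n}$ from $\hat W^\n$ but $Y$ is small there, so each over-truncated Brownian increment contributes its full (signed) size; the signed sum of these $\asymp n^{1-c'/2}$ contributions of magnitude $\asymp\sqrt{\log n/n}$ has standard deviation $\gtrsim \sqrt{n^{1-c'/2}}\cdot\sqrt{\log n/n}=n^{-c'/4}\sqrt{\log n}$, and since jumps of $X$ exceeding $a_n$ occur at only $\Oh_\p(1)$ indices (because $\ov\Pi(a_n)=\Oh(a_n^{-p})=\Oh((n/\log n)^{p/2})=o(n)$) they cannot cancel this. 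Hence $\sup_t|W-\hat W^\n-\dots|\gtrsim_\p n^{-c'/4}\sqrt{\log n}$, and multiplying by $n^{(2-p)/4}$ gives $\gtrsim_\p n^{(2-p-c')/4}\sqrt{\log n}\to\infty$ since $c'<2-p$. The delicate point is controlling the endpoint correction $\hat W^\n_1 t$: one shows the over-truncation on $(1/2,1]$ is an independent copy of the same order, so the corrected difference at $t=1/2$ still has the claimed magnitude — a standard "half versus whole" argument.

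Part~(c): if $a_n\to 0$ but $n^{1/2-\epsilon}a_n\not\to 0$ for some $\epsilon>0$, there is a subsequence with $a_n\ge n^{-1/2+\epsilon}$ (and $a_n\to0$). Since $\Pi\ne 0$ there is $r>0$ with $\ov\Pi(r)>0$; choose $\eta_n$ with $a_n/2<\eta_n<a_n$. The jumps of $Y$ of size in $(\eta_n,a_n)$ survive the filter entirely, and their expected count over $(0,1/2]$ is $\asymp n\,\Pi(\eta_n,a_n)$. If $\Pi$ has a density bounded below near $0$, or more generally by choosing $\eta_n$ appropriately using that $\ov\Pi$ cannot be constant on any interval accumulating at $0$ unless it is eventually $0$ (and if $\ov\Pi\equiv 0$ near $0$ then $Y$ has finitely many jumps and one instead looks at a fixed jump of size $>a_n$ eventually... which is truncated, so one must look at the finitely many jumps of size in $(\eta_n,a_n)\to(0,0)$ — here one uses that $a_n\to0$ forces any fixed jump to eventually exceed $a_n$, so the relevant jumps are the small ones), one gets $n\Pi(\eta_n,a_n)\gtrsim n a_n^{p'}$ for suitable $p'$. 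Each surviving such jump contributes $\asymp a_n\ge n^{-1/2+\epsilon}$ to the error at $t=1/2$; even a single one (occurring with probability bounded below when the count is $\Omega(1)$, or with high probability when it diverges) yields $\sup_t|\cdots|\gtrsim_\p n^{-1/2+\epsilon}$, hence after scaling $\gtrsim_\p n^{(2-p)/4-1/2+\epsilon}=n^{-p/4+\epsilon}$, which diverges for any $p<4\epsilon$, in particular for some $p\in(\beta_*,2]$ once $\epsilon$ is (by shrinking) taken small but positive — wait, this needs $4\epsilon>\beta_*$, which fails if $\epsilon$ is tiny; so instead one uses \emph{many} surviving jumps: their signed sum has standard deviation $\gtrsim \sqrt{n\Pi(\eta_n,a_n)}\,a_n$, and one shows this times $n^{(2-p)/4}$ diverges for $p$ just above $\beta_*$ by a regular-variation-type estimate relating $\Pi(\eta_n,a_n)$ to $a_n^{-\beta_*}$ along the subsequence.

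\textbf{Main obstacle.} The technically hardest step is part~(c): making the lower bound on $\Pi(\eta_n,a_n)$ precise without any regularity assumption on $\Pi$, and reconciling the two regimes (finitely many jumps near a threshold that is still shrinking, versus infinitely many), so that the resulting divergence holds for \emph{some} $p\in(\beta_*,2]$ rather than only for $p$ near $0$. This requires carefully choosing the auxiliary level $\eta_n$ in terms of $a_n$ and the index $\beta_*$ (exploiting the $\liminf$ in the definition of $\beta_*$ exactly as in the necessity half of Theorem~\ref{thm:gral_rate}), and then a second-moment argument for the signed sum of surviving small jumps together with a union bound ruling out cancellation by genuinely large jumps of $X$, whose count stays $\Oh_\p(1)$ because $\ov\Pi(a_n)=o(n)$.
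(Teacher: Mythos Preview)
Your intuition for each part is correct --- over-truncated Brownian increments for (b), surviving small jumps for (c), small-jump activity for (a) --- but the execution is considerably messier than the paper's, and your part~(c) has a genuine gap that you yourself flag but do not close.

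The paper's organising device is the reduction you only gesture at with ``half versus whole''. Evaluating the bridge at $t=1/2$ (for even $n$) gives
\[
n^{(2-p)/4}\bigl(W_{1/2}-\hat W_{1/2}^\n-(W_1-\hat W_1^\n)/2\bigr)
=\tfrac12\sum_{i\le n/2}n^{(2-p)/4}\bigl(V_i^\n-V_{n/2+i}^\n\bigr),
\]
a sum of i.i.d.\ \emph{symmetric} terms. By the standard criterion for such sums (Kallenberg, Ex.~4.18), convergence to $0$ in probability is \emph{equivalent} to
\[
n\,\e\bigl(n^{1-p/2}(V_1^\n-V_2^\n)^2\wedge 1\bigr)\to 0.
\]
All three parts then reduce to disproving this single scalar condition by exhibiting one event of controlled probability on which $|V_1^\n-V_2^\n|\gtrsim a_n$. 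For (b) the event is $\{\Delta_1^nW>a_n,\ \Delta_1^nY\ge 0,\ |\Delta_2^nW|\le a_n/2,\ |\Delta_2^nY|\le a_n/2\}$; for (c) and (a) it is $\{|\Delta_1^nW|\le a_n/2,\ a_n/4<|\Delta_1^nY|\le a_n/2,\ |\Delta_2^nW|\le a_n/2,\ |\Delta_2^nY|\le a_n/8\}$. No counting of contributions, no signed-sum standard-deviation heuristics, and no separate handling of the endpoint compensation are needed.

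This reduction also dissolves your obstacle in (c). You never need $\Pi(\eta_n,a_n)$: the probability of the second event is, up to constants, $\p(|Y_{1/n}|>a_n/4)$ (the upper constraint $|Y_{1/n}|\le a_n/2$ holds with probability tending to~$1$ by Lemma~\ref{lem:levy}(a)), and Lemma~\ref{lem:lower_boundY} converts this to $n^{-1}\ov\Pi(a_n/2)$. The definition of $\beta_*$ gives $\ov\Pi(x)\ge c\,x^{-\nu}$ for all small $x$ whenever $\nu<\beta_*$ (since $\liminf_{x\downarrow 0}x^\nu\ov\Pi(x)>0$), with no regularity of $\Pi$ on intervals required; the case $\beta_*=0$ uses only $\ov\Pi(a_n/2)\ge\ov\Pi(r)>0$ eventually. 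Your attempt to estimate $\Pi(\eta_n,a_n)$ directly is the wrong object, and your worry about ``finitely many jumps near a shrinking threshold'' is a symptom of that.
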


As mentioned above, compensation by $\hat W^\n_1 t$ is not 
required in the case when $Y$ has bounded variation on compact 
intervals (implying $\beta^*\in[0,1]$). We have the following 
additional result: 
\begin{prop}
\label{prop:drift}
If $p\in(\beta^*,1]$ and $a_n$ satisfies~\eqref{eq:a_n}, 
it holds that
\begin{equation}
\label{eq:trunc'}
n^{(1-p)/2}\sup_{t\in [0,1]}\Big|W_t+\gamma_0\sigma^{-1}t-\hat W_t^\n\Big|\cip 0,
\end{equation}
where $\gamma_0$ is the linear drift of~$X$. 
The limit~\eqref{eq:trunc'} is also true for $p=1$ if $Y$ has 
bounded variation on compact intervals, $a_n\to 0$ and 
$\liminf_{n\to\infty}na^2_n/\log n>\sigma^2$.
\end{prop}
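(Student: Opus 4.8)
The plan is to reduce the statement to the bridge estimate of Theorem~\ref{thm:tunc_method} together with a separate analysis of the endpoint $\hat W^\n_1$. By the triangle inequality,
\[
\sup_{t\in[0,1]}\big|W_t+\gamma_0\sigma^{-1}t-\hat W^\n_t\big|
\le\sup_{t\in[0,1]}\big|W_t-\hat W^\n_t-(W_1-\hat W^\n_1)t\big|+\big|\hat W^\n_1-W_1-\gamma_0\sigma^{-1}\big| ,
\]
and, since $(1-p)/2\le(2-p)/4$, Theorem~\ref{thm:tunc_method} (with exponent $p$ when $p<1$ and with exponent $2$ when $p=1$) makes $n^{(1-p)/2}$ times the first term on the right vanish in probability; it therefore remains to prove $n^{(1-p)/2}|\hat W^\n_1-W_1-\gamma_0\sigma^{-1}|\cip0$. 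Writing $J_t=\sum_{s\le t}\Delta X_s$ (well defined since $Y$ has bounded variation) and $\xi_t=\gamma_0 t+\sigma W_t$, so that $X=\xi+J$ and $\sigma W_1+\gamma_0=X_1-J_1=\sum_i\Delta^n_iX-J_1$, a short computation gives
\[
\sigma\big(\hat W^\n_1-W_1-\gamma_0\sigma^{-1}\big)=A_n-B_n,\qquad
A_n:=\sum_{i=1}^n\Delta^n_iJ\,\1{|\Delta^n_iX|\le a_n},\qquad
B_n:=\sum_{i=1}^n\Delta^n_i\xi\,\1{|\Delta^n_iX|>a_n},
\]
where $A_n$ is the jump content that survives the filter and $B_n$ the drift-and-Brownian content of the discarded increments. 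The task is thus $n^{(1-p)/2}A_n\cip0$ and $n^{(1-p)/2}B_n\cip0$.

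I would bound $A_n$ as follows. On any surviving increment $|\Delta^n_iJ|\le|\Delta^n_iX|+|\Delta^n_i\xi|\le a_n+\max_j|\Delta^n_j\xi|$, and $\max_j|\Delta^n_j\xi|=\Oh_\p(\sqrt{\log n/n})=\Oh_\p(a_n)$ by the lower bound in~\eqref{eq:a_n}; moreover, for $n$ large no surviving increment contains a jump of modulus $\ge1$ (a.s., the finitely many such jumps becoming isolated). Using $|x|\1{|x|\le\theta}\le\theta^{1-q}|x|^q$ and the subadditivity of $x\mapsto x^q$ for $q\in(0,1]$, this yields
\[
|A_n|\le\Oh_\p\big(a_n^{1-q}\big)\sum_{s\le1}|\Delta X_s|^q\1{|\Delta X_s|<1},
\]
and by the compensation formula the expectation of the sum equals $\int_{(-1,1)}|x|^q\Pi(\D x)$, which is finite as soon as $q>\beta^*$, so $|A_n|=\Oh_\p(a_n^{1-q})$. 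Since $p>\beta^*$ \emph{strictly}, I fix $q\in(\beta^*,p)$; then $n^{(1-p)/2}a_n^{1-q}\to0$ follows from the upper bound $n^{1/2-\ep}a_n\to0$ in~\eqref{eq:a_n}. The extra power $a_n^{\,p-q}$ gained from the strict inequality $p>\beta^*$ is precisely what is needed here.

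For $B_n$ in the regime $p<1$ a crude estimate suffices: $|B_n|\le\max_j|\Delta^n_j\xi|\cdot\#\{i:|\Delta^n_iX|>a_n\}$ with $\max_j|\Delta^n_j\xi|=\Oh_\p(\sqrt{\log n/n})$. For a small $\eta>0$, $\{|\Delta^n_iX|>a_n\}\subseteq\{|\Delta^n_iJ|>\eta a_n\}\cup\{|\Delta^n_i\xi|>(1-\eta)a_n\}$; by Markov and the moment bound of the previous paragraph the first event occurs for $\Oh_\p(a_n^{-q})=\Oh_\p(n^{q/2})$ indices, while by the Gaussian tail estimate and the lower bound $\liminf na_n^2/(\sigma^2\log n)\ge2-\beta^*$ one gets $\e\#\{i:|\Delta^n_i\xi|>(1-\eta)a_n\}\le n^{\beta^*/2+\rho(\eta)}$ for $n$ large, with $\rho(\eta)\to0$ as $\eta\downarrow0$. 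Hence $n^{(1-p)/2}|B_n|=\Oh_\p\big((n^{(q-p)/2}+n^{(\beta^*-p)/2+\rho(\eta)})(\log n)^{1/2}\big)$, which tends to $0$ once $\eta$ is chosen small, because $q<p$ and $\beta^*<p$. Combined with the bound on $A_n$, this proves the first assertion of the proposition for all $p\in(\beta^*,1]$ and $a_n$ as in~\eqref{eq:a_n} (when $p=1$ one has $\beta^*<1$, so a valid $q\in(\beta^*,1)$ still exists and the above gives $A_n\cip0$ and $B_n\cip0$, which is the required conclusion).

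It remains to treat the second assertion — $p=1$ with $Y$ of bounded variation on compacts, $a_n\to0$ and $\liminf na_n^2/\log n>\sigma^2$ — which also permits $\beta^*=1$; here $n^{(1-p)/2}=1$, so only $A_n\cip0$ and $B_n\cip0$ are needed. For $A_n$ I would argue directly: by bounded variation, both $\sum_i|\Delta^n_iJ|\1{|\Delta^n_iJ|<1}$ and its partial sums over $\{i:|\Delta^n_iJ|>\theta_n\}$ converge a.s.\ to $\sum_{s\le1}|\Delta X_s|\1{|\Delta X_s|<1}$ for every $\theta_n\downarrow0$, and taking $\theta_n=a_n+\max_j|\Delta^n_j\xi|$ gives $A_n\to0$. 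For $B_n$ the crude bound only yields $\Oh_\p(1)$, and one has to exploit the sign cancellation among the discarded increments: conditioning on $\sigma(Y)$ (here $Y\perp W$) and centring, $B_n$ splits into a conditionally mean-zero martingale in $i$ plus a bias; the conditional variance is $o_\p(1)$ by the Gaussian tail estimate under $\liminf na_n^2/\log n>\sigma^2$, and the bias is $o_\p(1)$ because $\sum_i|\Delta^n_iJ|<\infty$ controls the number and size of increments carrying a non-negligible jump, because a centred Gaussian truncated symmetrically about a small point has small mean, and because $a_n\sqrt n\to\infty$ (forced by the threshold lower bound) leaves a scale $\lambda_n$ with $1/\sqrt n\ll\lambda_n\ll a_n$ separating small from large jump increments. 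I expect this control of $B_n$ at the boundary to be the main obstacle: the threshold then sits exactly at the Gaussian large-deviation scale, so the tail bounds must be used with sharp constants and no slack. Fortunately, this is essentially the same computation that already appears in the proof of Theorem~\ref{thm:tunc_method} and Lemma~\ref{lem:trunc1}, and can be reused; one additionally needs the routine but delicate bookkeeping of the few increments containing two comparably-sized jumps or a jump very close to the threshold, handled as usual by splitting jumps at a fixed level $\delta$ and letting $\delta\downarrow0$.
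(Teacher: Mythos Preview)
Your overall strategy coincides with the paper's: reduce to the endpoint via Theorem~\ref{thm:tunc_method} and split the endpoint error into a ``jumps that survived the filter'' piece and a ``continuous part that was discarded'' piece (the paper groups the drift with $Y$ rather than with $W$, but this is cosmetic). For the main range $p\in(\beta^*,1]$ your argument is correct and takes a genuinely different route: you control $A_n$ pathwise via $|x|\1{|x|\le\theta}\le\theta^{1-q}|x|^q$ and subadditivity of $|\cdot|^q$ applied to $\sum_s|\Delta X_s|^q$, and you control $B_n$ by counting discarded increments. The paper instead stays in $L^1$ throughout: it shows $n^{3/2-p/2}\e(|W_{1/n}|\1{|X_{1/n}|>a_n})\to0$ by the split of Lemma~\ref{lem:trunc1}, and it uses the sharper pure-jump estimate
\[
\e\big(|J_{1/n}|\wedge a_n\big)\le n^{-1}\!\int_{(-a_n,a_n)}|y|\,\Pi(\D y)+a_n\ov\Pi(a_n)/n
\]
for the $J$-part. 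Your approach is more elementary and avoids the moment machinery; the paper's is shorter and gives uniform $L^1$ control.

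Where your proposal is weaker is the boundary case $p=1$ with $\beta^*$ possibly equal to~$1$. Your plan for $B_n$ --- condition on $Y$, split into martingale plus bias, control the bias by separating small and large jump increments at a scale $\lambda_n$ --- can be made to work, but it is much harder than necessary, and your sketch is vague precisely where the constants must be sharp. The paper bypasses all of this: the \emph{same} $L^1$ argument already used for $p<1$ gives $n\,\e(|W_{1/n}|\1{|X_{1/n}|>a_n})\to0$ directly once $\liminf na_n^2/(\sigma^2\log n)>1$, so $\e|B_n|\to0$ with no conditioning at all. Likewise the paper handles $A_n$ in one line, observing that the displayed bound above is $o(n^{-1})$ because $\int_{(-a_n,a_n)}|y|\Pi(\D y)\to0$ (dominated convergence, bounded variation) and $x\ov\Pi(x)\to0$. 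You would save real effort by switching to the $L^1$ viewpoint for this case rather than pursuing the martingale/bias decomposition.
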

Observe that in the case considered by Proposition~\ref{prop:drift} we may 
recover $\sigma W_t+\gamma_0t$ and the pure jump part $Y_t-\gamma_0 t$, 
whereas separating the linear drift from the Brownian motion is impossible. 
The convergence rate, however, is worse than in Theorem~\ref{thm:tunc_method}.

\subsection{Methods I and II. Numerical comparison}
\label{num_comparison}

To compare both methods, we test them on the process $X=\sigma W+Y$ 
where $\sigma=1$, $W$ is a $3$-dimensional Bessel process and $Y$ is a 
strictly $\a$-stable process with $\a=1.2$ and skewness parameter $-0.5$. 
In the application of Method II we used the threshold $a_n=n^{-1/2}\log n$, 
which satisfies~\eqref{eq:a_n} independently of $\sigma$ and $\alpha$. 

Figure~\ref{fig:comparison} displays the errors 
$W_t-W^\n_t -(W_1-W^\n_1)t$ and $W_t-\hat W^\n_t -(W_1-\hat W^\n_1)t$ 
under both methods. Table~\ref{table:comparison} reports the mean and 
standard deviation of the maximal absolute value (on the skeleton) of the 
two error processes after repeating the procedure described in the previous 
paragraph for 1000 independent paths of $X$. The comparison of the test 
statistics in Table~\ref{table:comparison} suggests that Method I outperforms 
Method II by a constant factor in this example. 

\begin{figure}[h]
	\begin{center}
		\includegraphics[width=0.49\textwidth]{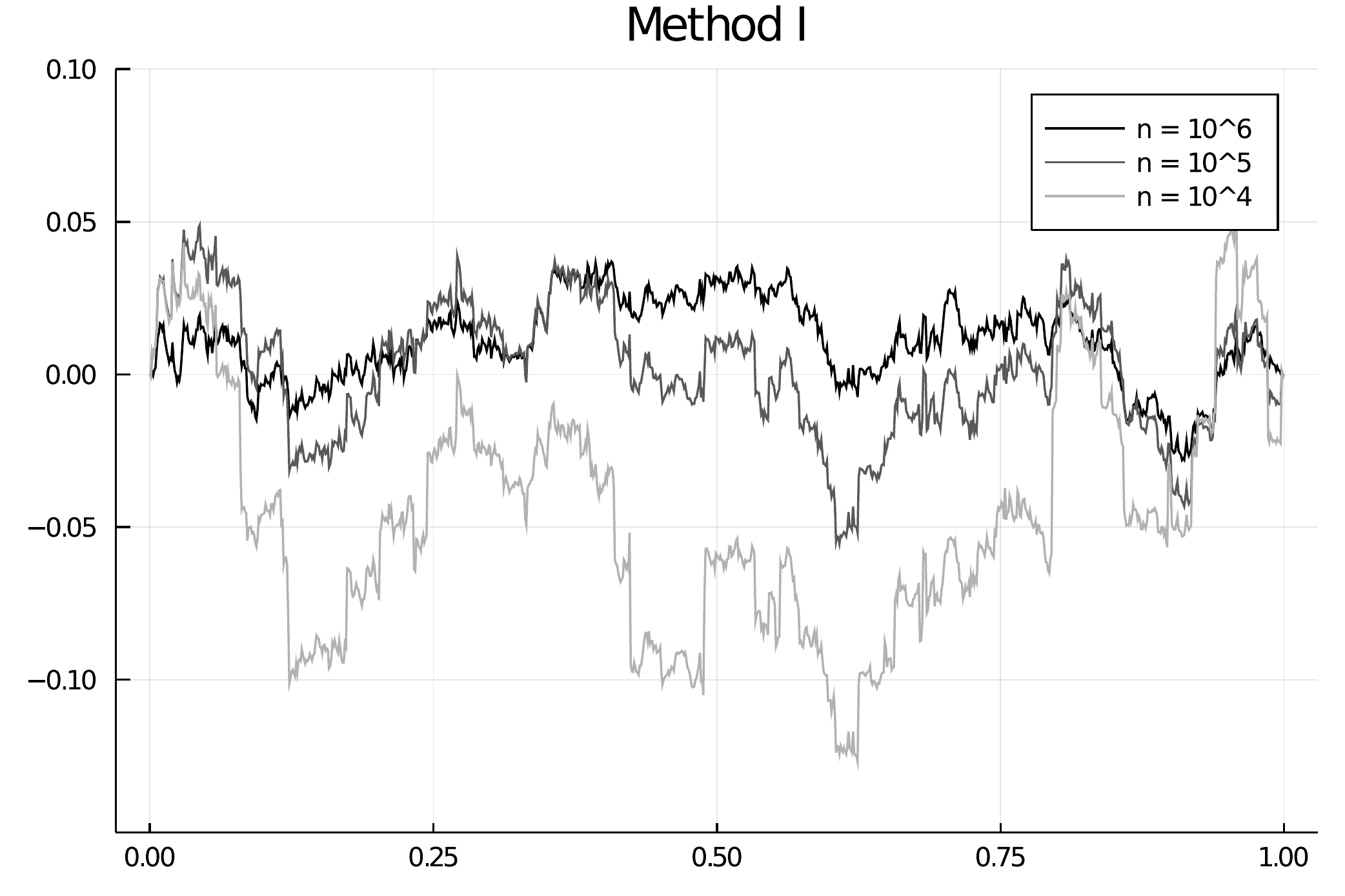}
		\includegraphics[width=0.49\textwidth]{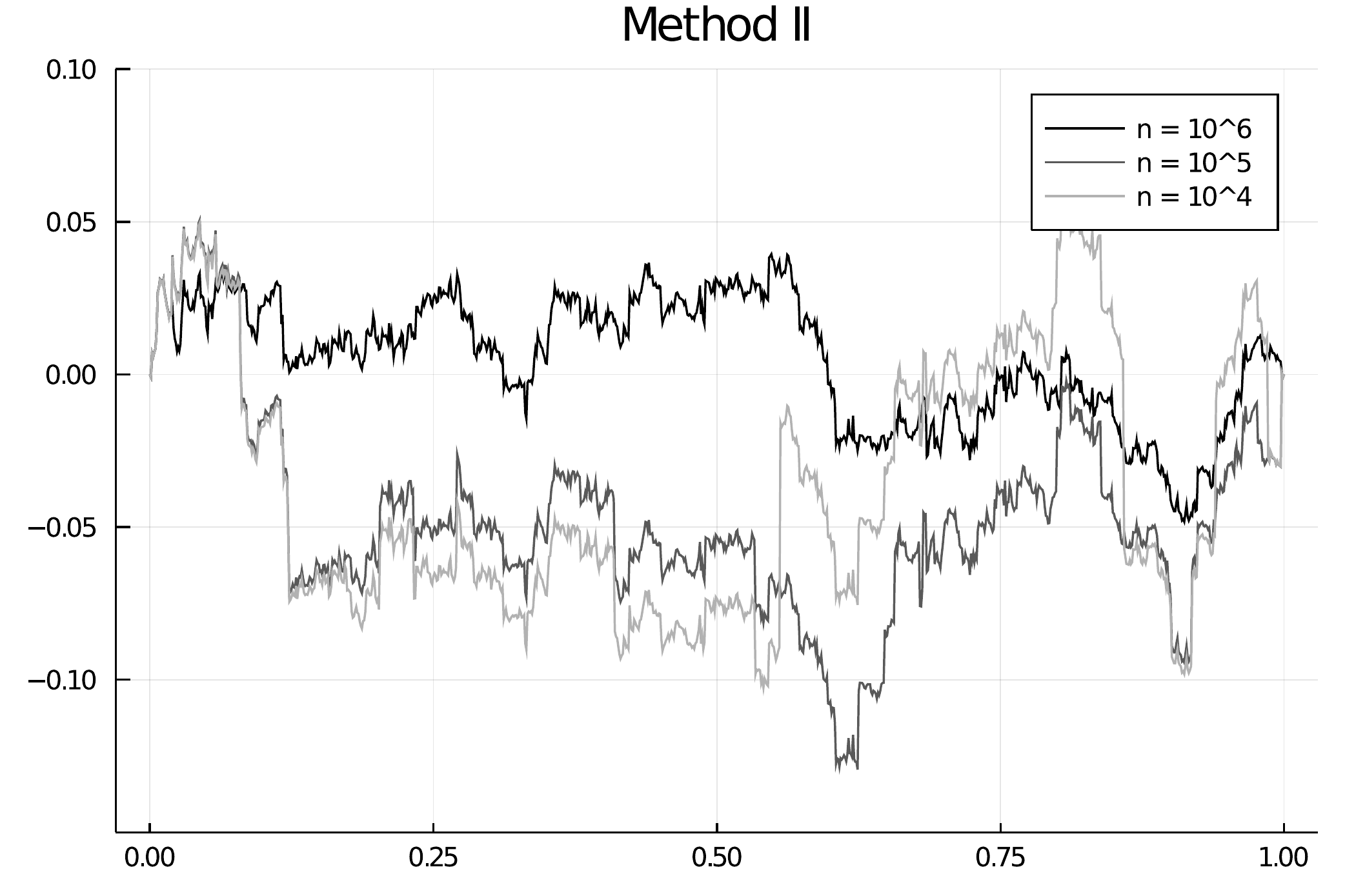}
	\end{center}
	\caption{\small Errors $W_t-W^\n_t -(W_1-W^\n_1)t$ and 
		$W_t-\hat W^\n_t -(W_1-\hat W^\n_1)t$ for the approximation 
		levels $n\in\{10^4, 10^5, 10^6\}$.}
	\label{fig:comparison}
\end{figure}

\begin{table}[h]
	\begin{tabular}{c|c|c|c|c}
		Level $n$ & $n=10^3$ & $n=10^4$ & $n=10^5$ & $n=10^6$ \\
		\hline
		Method I & .2299(.0789) & .1542(.0488) & .1013(.0317) & .0660(.0203) \\
		Method II & .2363(.0837) & .1746(.0570) & .1214(.0371) & .0842(.0257) \\
	\end{tabular} 
	\vspace{0.2cm}
	\caption{Means (and standard deviations) of the maximal absolute value of the 
		errors.}
	\label{table:comparison}
\end{table}

\section{Proofs for Method I}\label{sec:proofs}
Consider the process of interest 
\[
\Big(n^{(2-p)/4}\big(W_t-W_t^\n-(W_1-W^\n_1)t\big)\Big)_{t\in[0,1]},
\] 
with $p\in(0,2]$ and let $\xi_{ni}$ be its $i$th increment, that is, we 
apply~$\Delta_i^n$. According to~\eqref{eq:Wskeleton} we may restrict our 
attention to the partial sum process $(\sum_{i\leq tn}\xi_{ni})_{t\in[0,1]}$. 
Observe that 
\begin{align*}
\xi_{ni}
&=n^{(2-p)/4}\bigg(\Delta_i^nW-\Delta_{\pi(i)}^nW'
-\frac{1}{n}\sum_{j\le n}(\Delta_j^nW-\Delta_j^nW')\bigg)\\
&=n^{-p/4}\bigg(Z_i-Z'_{\pi(i)}-\frac{1}{n}\sum_{j\le n}(Z_j-Z'_j)\bigg),
\end{align*}
where $Z_1,\ldots,Z_n$ and $Z'_1,\ldots,Z'_n$ are i.i.d.\ standard normal variables. 
Recall that~$\pi$ is the permutation such that $Z'_{\pi(i)}$ has the same ordering as 
\begin{equation}\label{eq:order}
Z_i+\frac{1}{\sigma}\sqrt n \Delta_i^nY.
\end{equation}
Additionally, we define the permutation $\nu$ so that $Z_{\nu(i)}$ is ordered 
according to~\eqref{eq:order} and thus the orderings of $Z_{\nu(i)}$ and 
$Z'_{\pi(i)}$ coincide. In the decomposition 
 \[
 \xi_{ni}=\tilde\xi_{ni}+\hat\xi_{ni}
 	=n^{-p/4}\bigg(Z_{\nu(i)}-Z'_{\pi(i)}-\frac{1}{n}\sum_{j\le n}(Z_j-Z'_j)\bigg)
 		+n^{-p/4}(Z_i-Z_{\nu(i)}),
 \]
the second term does not depend on $W'$, whereas the first term essentially 
corresponds to comparing certain order statistics (the order is random and 
dependent on~$X$). 

The strategy is to split the analysis of the partial sum process of $\xi_{ni}$ into 
that of the partial sum processes of $\tilde\xi_{ni}$ and $\hat\xi_{ni}$. 
Importantly, $(\hat\xi_{ni})_{i=1,\ldots,n}$ and $(\tilde\xi_{ni})_{i=1,\ldots,n}$ 
are both exchangeable. In fact, this is true for any process $Y$ as long as 
$(\Delta_i^n W,\Delta_i^n X)$ is exchangeable. Thus the general theory 
in~\cite[Thm~3.13]{MR2161313} for exchangeable increment processes is 
applicable. In this respect, since 
$\sum_{i\le n}\hat \xi_{ni}=\sum_{i\le n}\tilde \xi_{ni}=0$, we note that the 
convergence in probability of the partial sum processes of $\tilde\xi_{ni}$ and 
$\hat\xi_{ni}$ to 0 is equivalent to, respectively, the limits 
\begin{equation}\label{eq:toprove}
\sum_{i\le n}\tilde \xi_{ni}^2\cip 0 
\qquad\text{and}\qquad
\sum_{i\le n}\hat \xi_{ni}^2\cip 0.
\end{equation}
Lemma~\ref{lem:normal} below establishes the first limit for any $p>0$.
The second convergence depends on the choice of $p$: 
it holds for large enough $p$ and fails for sufficiently small $p$. 

\subsection{Preparatory results}
\begin{lemma}\label{lem:normal}
Let $Z_{(1)}<\cdots<Z_{(n)}$ and $Z'_{(1)}<\cdots<Z'_{(n)}$ be two independent 
ordered sequences of $n$ standard normal random variables. Then the limit 
\[
a_n\sum_{i\le n} \Big(Z_{(i)}-Z'_{(i)}-\frac{1}{n}\sum_{j\le n}(Z_{(j)}-Z'_{(j)})\Big)^2 \cip0
\]
holds whenever $a_n\log\log n\to 0$.
\end{lemma}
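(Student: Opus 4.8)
The plan is to control the quantity
\[
S_n := a_n\sum_{i\le n}\Big(Z_{(i)}-Z'_{(i)}-\tfrac1n\sum_{j\le n}(Z_{(j)}-Z'_{(j)})\Big)^2
\le a_n\sum_{i\le n}\big(Z_{(i)}-Z'_{(i)}\big)^2
\]
by comparing each order statistic to the corresponding quantile of the standard normal. Writing $F$ for the standard normal c.d.f.\ and $Q=F^{-1}$ for its quantile function, the classical strategy is to use the two (coupled via the same uniforms, or independent — either works for the bound) Bahadur--Kiefer / quantile process representations: $Z_{(i)}\approx Q(i/(n+1))$ with fluctuations governed by the uniform empirical process. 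So I would split
\[
Z_{(i)}-Z'_{(i)} = \big(Z_{(i)}-Q(\tfrac{i}{n+1})\big) - \big(Z'_{(i)}-Q(\tfrac{i}{n+1})\big),
\]
and it suffices to show $a_n\sum_{i\le n}(Z_{(i)}-Q(i/(n+1)))^2\cip 0$ for a single ordered normal sample (the primed sum is identically distributed, and $(x-y)^2\le 2x^2+2y^2$).

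The core estimate is therefore a quantile-process bound for the normal distribution. By the probability integral transform, $Z_{(i)}=Q(U_{(i)})$ where $U_{(1)}<\cdots<U_{(n)}$ are ordered uniforms, so
\[
Z_{(i)}-Q(\tfrac{i}{n+1}) = Q(U_{(i)})-Q(\tfrac{i}{n+1}),
\]
and by the mean value theorem this equals $(U_{(i)}-\tfrac{i}{n+1})/\varphi(Q(\theta_i))$ for some $\theta_i$ between $U_{(i)}$ and $i/(n+1)$, where $\varphi=Q'^{-1}$ is the normal density. The key analytic input is that $1/\varphi(Q(u))$ blows up only like $\sqrt{\,2\log(1/u)\,}$ (and symmetrically near $1$) as $u\downarrow 0$ — this is the "light tails" statement alluded to in the introduction. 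Combined with the classical fact that $\max_i|U_{(i)}-i/(n+1)|=\Oh_\p(\sqrt{\log n/n})$ and, more refined, that the normalized uniform spacings/deviations satisfy an a.s.\ bound of the form $|U_{(i)}-i/(n+1)|\le C\sqrt{\tfrac{i(n-i)}{n^3}\log n}$ uniformly (up to the endpoints, which need separate care), one gets roughly
\[
\big(Z_{(i)}-Q(\tfrac{i}{n+1})\big)^2 \lesssim \frac{i(n-i)}{n^3}\,(\log n)\cdot \big(2\log(n/i)\vee 2\log(n/(n-i))\big)
\]
for the bulk indices, and summing over $i$ yields a bound of order $\frac{\log n\cdot\log n}{n}\cdot n = (\log n)^?$... which I will need to track: in fact the sum $\sum_i \tfrac{i(n-i)}{n^3}\log(n/i)$ is $\Oh(1)$, so the whole sum is $\Oh(\log n\cdot \log\log n / n)$ — wait, this needs the endpoint contributions to be controlled, which is where the extra $\log\log n$ rather than $\log n$ enters. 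Hence $a_n\cdot\Oh_\p(\log\log n/n)\to0$ is NOT quite it; the right statement, consistent with the hypothesis $a_n\log\log n\to0$, is that $\sum_{i\le n}(Z_{(i)}-Q(i/(n+1)))^2=\Oh_\p(\log\log n)$ after the correct normalization, i.e.\ the $1/n$ from the spacings squared cancels against the $n$ terms, leaving the iterated logarithm from the extremes. So the target reduces to: $\sum_{i\le n}(Z_{(i)}-Q(i/(n+1)))^2=\Oh_\p(\log\log n)$.

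To nail that, I would handle the extreme indices separately from the bulk. For the bulk $\delta n\le i\le (1-\delta)n$, the density $\varphi(Q(\theta_i))$ is bounded below by a constant, $U_{(i)}-i/(n+1)$ has variance $\asymp i(n-i)/n^3$, and by a maximal inequality (e.g.\ Dvoretzky--Kiefer--Wolfowitz or a chaining bound on the uniform empirical process) $\sum_{\delta n\le i\le(1-\delta)n}(U_{(i)}-i/(n+1))^2 = \Oh_\p(1)$, in fact it is of order $\int_0^1 B(u)^2\,du$ in the limit (a Brownian bridge functional), giving an $\Oh_\p(1)$ contribution. For the extreme left tail $i\le \delta n$ (symmetric on the right), I would use that $\min(U_{(i)})=U_{(1)}\asymp 1/n$, that $Z_{(i)}=Q(U_{(i)})$ with $Q$ near $0^+$ behaving like $-\sqrt{2\log(1/u)}$, and a careful sum of the first few order statistics: $\sum_{i\le \log n}Z_{(i)}^2 \approx \sum_{i\le\log n} 2\log(n/i)$, whose dominant term is controlled, while $Q(i/(n+1))^2$ similarly contributes; the difference telescopes to something of order $\log\log n$ because consecutive normal quantiles near the tail differ by $\Oh(1/\sqrt{\log n}\cdot \tfrac1i)$, and $\sum 1/i^2$ converges. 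The main obstacle is precisely this extreme-value bookkeeping: showing that the contribution of the $\Oh(\log n)$ smallest and largest order statistics to $\sum(Z_{(i)}-Q(i/(n+1)))^2$ is $\Oh_\p(\log\log n)$ and no larger — this requires either a direct moment computation on normal order statistics near the extremes (using $\e Z_{(i)}^2$ asymptotics and Rényi's representation $U_{(i)}\eqd E_1/(n+1)+\cdots$ with i.i.d.\ exponentials) or invoking a known normal quantile-process weak-approximation result with the correct weight function. I expect the bulk part to be routine (Brownian-bridge functional, $\Oh_\p(1)$), and essentially all the work — and the reason the threshold is $\log\log n$ rather than $\log n$ — to be in the tails.
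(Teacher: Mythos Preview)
Your overall strategy matches the paper's exactly: drop the centering (using $\sum_i(x_i-\bar x)^2\le\sum_i x_i^2$), then compare each ordered sample to the normal quantile function and reduce to showing $\sum_{i\le n}(Z_{(i)}-Q(i/(n+1)))^2=\Oh_\p(\log\log n)$, equivalently that the $L^2$-norm of the normalized normal empirical quantile process $\tilde F_n=\sqrt n(F_n^{-1}-\Phi^{-1})$ satisfies $\|\tilde F_n\|_2^2=\Oh_\p(\log\log n)$. The paper's proof is one line at this point: it cites del~Barrio, Gin\'e and Utzet (\emph{Bernoulli} 2005, Thm~4.6(ii)), from which one deduces $\|\tilde F_n\|_2^2/\log\log n\cip 1$, and is done.

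Your attempt to establish that bound from scratch is on the right track---the decomposition into bulk (giving $\Oh_\p(1)$ via a Brownian-bridge functional) and extremes (where the $\log\log n$ lives) is exactly how the cited result is proved---but your sketch of the extreme part is not a proof. The heuristic ``consecutive normal quantiles near the tail differ by $\Oh(1/\sqrt{\log n}\cdot 1/i)$, and $\sum 1/i^2$ converges'' does not by itself yield the $\log\log n$ rate; what is actually needed is a weighted approximation of the uniform quantile process near the boundary together with the precise growth $Q'(u)\sim 1/(u\sqrt{2\log(1/u)})$ as $u\downarrow 0$, and tracking the resulting integral $\int_0^\delta B(u)^2/(u^2\cdot 2\log(1/u))\,\D u$ carefully (this is where the iterated logarithm appears, and it is delicate---the integral diverges without the $\log$ factor). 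So either carry out that weighted analysis in full, or simply cite the del~Barrio--Gin\'e--Utzet result as the paper does; there is no shortcut in between.
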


\begin{proof}
Letting $\mu_n$ be the inner sum of the statement we note that
\[
\sum_{i\le n} \Big(Z_{(i)}-Z'_{(i)}-\frac{1}{n}\mu_n\Big)^2
=\sum_{i\le n} \Big(Z_{(i)}-Z'_{(i)}\Big)^2-\frac{1}{n}\mu_n^2.
\]
Since $\mu_n/\sqrt{n}\sim N(0,2)$ has constant distribution we have 
$\mu_n^2/(n\log\log n)\cip 0$. Thus it is left to prove that 
\[
a_n\sum_{i\le n} \Big(Z_{(i)}-Z'_{(i)}\Big)^2
=a_nn\|F_n^{-1}-G_n^{-1}\|_2^2 \cip 0,
\]
where $\|f\|_2^2=\int_0^1 f^2(x)dx$, and  $F_n^{-1}$ and $G_n^{-1}$ are the right-inverses of the empirical distributions of 
$Z_\cdot$ and $Z'_\cdot$, respectively.

Let $\Phi$ denote the standard normal distribution and let 
$\tilde{F}_n=\sqrt n(F_n^{-1}-\Phi^{-1})$ and 
$\tilde{G}_n=\sqrt n(G_n^{-1}-\Phi^{-1})$
be the respective normalised empirical quantile processes. 
By Minkowski inequality we have
\begin{align*}
n\|F_n^{-1}-G_n^{-1}\|_2^2=\|\tilde{F}_n-\tilde{G}_n\|_2^2\leq 2(\|\tilde{F}_n\|_2^2+\|\tilde{G}_n\|_2^2).
\end{align*}
From~\cite[Thm~4.6(ii)]{MR2121458} it can be deduced that 
$\|\tilde{F}_n\|_2^2/\log\log n\cip 1$. The same is true of $\tilde{G}_n$, 
completing the proof.
\end{proof}
It can be shown that $\log\log n$ is the ``right'' scale, see Appendix~\ref{sec:normal}. 
The following permutation Lemma (with $q=2$) is crucial to upper bound 
$\sum_{i\le n}(Z_i-Z_{\nu(i)})^2$. 
In this lemma it is more convenient to swap $\nu$ for $\nu^{-1}$.
\begin{lemma}\label{lem:permutation}
Let $z_1\leq \cdots\leq z_n$ be $n\geq 1$ ordered real numbers. For arbitrary 
$y_i\in\R$ consider a permutation $\nu$ such that $z_{\nu^{-1}(i)}+y_{\nu^{-1}(i)}$ 
is ordered. Then we have 
\[
\sum_{i\le n}|z_i-z_{\nu(i)}|^q\leq 2^q\sum_{i\le n} (|y_i|^q\wedge m^q),
\qquad q\ge 1,
\]
with $m=z_n-z_1$. 
\end{lemma}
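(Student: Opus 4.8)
The plan is to exploit the structure of the two orderings. Since $z_1\le\cdots\le z_n$ is already ordered and $\nu$ is chosen so that $(z_{\nu^{-1}(i)}+y_{\nu^{-1}(i)})_i$ is increasing, the map $\nu$ is precisely the permutation that sorts the sequence $z_j+y_j$; equivalently, $\nu(i)$ is the index of the $i$-th smallest value of $z_\cdot + y_\cdot$. First I would fix $i$ and aim to bound $|z_i-z_{\nu(i)}|$ by $2(|y_i|\wedge m)$, after which summing the $q$-th powers and using $(a+b)^q\le 2^{q-1}(a^q+b^q)$ — or, more simply, bounding each term by $2^q(|y_i|^q\wedge m^q)$ directly — gives the claim. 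The trivial bound $|z_i-z_{\nu(i)}|\le z_n-z_1=m$ always holds, so it suffices to prove $|z_i-z_{\nu(i)}|\le 2|y_i|$.

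The core of the argument is a sandwiching/counting estimate. Write $w_j=z_j+y_j$ and suppose, towards the interesting case, that $z_{\nu(i)}>z_i$ (the other case being symmetric). I would count indices two ways. On one hand, because $z$ is sorted, the number of $j$ with $z_j\le z_i$ is at least $i$ (it is exactly the largest index with that $z$-value, but $\ge i$ suffices). On the other hand, $\nu(i)$ is the position of $w_{\nu(i)}$ in the sorted $w$-sequence, so exactly $i-1$ indices $j$ satisfy $w_j<w_{\nu(i)}$. The key observation is: if $z_j\le z_i$ and also $z_j \le z_{\nu(i)}-2|y_i|$... — more cleanly, I would argue that any index $j$ with $z_j\le z_i$ and $y_j\le y_i$-type control forces $w_j<w_{\nu(i)}$, producing enough such indices to contradict the count $i-1$ unless $z_{\nu(i)}-z_i$ is small. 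Concretely: if $z_j \le z_{\nu(i)} - |y_j| - |y_{\nu(i)}|$ then $w_j = z_j+y_j \le z_{\nu(i)} - |y_{\nu(i)}| \le w_{\nu(i)}$, so such $j$ lies below $\nu(i)$ in the $w$-order. Combining this with $z_j\le z_i$ and comparing against the count of exactly $i-1$ indices below $\nu(i)$, together with $i$ being a lower bound for $\#\{j: z_j\le z_i\}$, one extracts that $z_i$ itself must satisfy $z_i > z_{\nu(i)} - |y_i|-|y_{\nu(i)}|$, and then a second application (using that $i-1$ is also the count and $z_{\nu(i)}$ sits at or above $z_i$) pins $|y_{\nu(i)}|$ in terms of $|y_i|$, yielding $|z_i-z_{\nu(i)}|\le 2|y_i|$.

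The main obstacle I anticipate is getting the counting inequality exactly right in the presence of possible ties among the $z_j$ and making the two-sided bound symmetric and clean; a slick way around this is to induct on $n$ by a swapping/exchange argument: if $\nu\ne\mathrm{id}$, pick an inversion, i.e. indices with $\nu(i)>\nu(k)$ while $i<k$, swap the values $\nu$ assigns there, check that this does not increase $\sum|z_i-z_{\nu(i)}|^q$ (a standard rearrangement-type inequality for the convex function $t\mapsto|t|^q$), and verify the swapped permutation still ``essentially'' sorts $z_\cdot+y_\cdot$ in a way that lets the induction close — but since $\nu$ is forced, the cleaner route is really the direct counting bound $|z_i - z_{\nu(i)}| \le |y_i| + |y_{\nu(i)}|$ followed by a bootstrapping step showing $|y_{\nu(i)}|$ can be replaced by $|y_i|$. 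I would write the direct proof: establish $|z_i-z_{\nu(i)}|\le|y_i|+|y_{\nu(i)}|$ by the sandwich above, note it gives $w_{\nu(i)}$ close to $w_i$, deduce from monotonicity of the $w$-order that $|y_{\nu(i)}|\le 2|y_i|$ or handle it by also bounding $z_{\nu(i)}-z_i$ in the reverse direction, and conclude.
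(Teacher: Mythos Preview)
Your plan has a genuine gap: the termwise bound you are aiming for is simply false. Take $n=3$, $z=(0,1,2)$, $y=(10,0,0)$. Then $z_j+y_j=(10,1,2)$, so $\nu^{-1}=(2,3,1)$ and $\nu=(3,1,2)$. We get $|z_2-z_{\nu(2)}|=|1-0|=1$ while $2|y_2|=0$, and $|z_3-z_{\nu(3)}|=|2-1|=1$ while $2|y_3|=0$; even your weaker intermediate claim $|z_i-z_{\nu(i)}|\le|y_i|+|y_{\nu(i)}|$ fails at $i=3$ since $|y_3|+|y_2|=0$. The point is that a single large $|y_j|$ can displace \emph{many} indices $i$ with $y_i=0$, so no bound of the form $|z_i-z_{\nu(i)}|\le C(|y_i|+|y_{\nu(i)}|)$ can hold termwise. (Incidentally, with the lemma's convention $\nu^{-1}$, not $\nu$, is the sorting permutation of $z_\cdot+y_\cdot$; $\nu(i)$ is the \emph{rank} of $z_i+y_i$.)

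What does hold, and what the paper exploits, is the inversion inequality: if $i<j$ but $\nu(i)>\nu(j)$ (or vice versa) then $|z_i-z_j|\le|y_i|+|y_j|$. The paper works one cycle of $\nu$ at a time and, for each $i$, locates the first successor $b(i)$ along the cycle that forms such an inversion with $i$ and additionally satisfies $|z_i-z_{\nu(i)}|\le|z_i-z_{b(i)}|$; this gives $|z_i-z_{\nu(i)}|^q\le 2^{q-1}(|y_i|^q+|y_{b(i)}|^q)$. The combinatorial heart of the proof is then to show that when several indices $i_1<\cdots<i_k$ share the same $b^*=b(i_u)$, the corresponding left-hand terms telescope so that a \emph{single} bound $2^{q-1}(|y_{i_1}|^q+|y_{b^*}|^q)$ suffices for all of them together. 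This ensures each $|y_j|^q$ appears at most twice in the final sum, yielding the factor~$2^q$. In the example above, $b(2)=b(3)=1$, and indeed one bound absorbs both displacements. Your counting/sandwich argument and the proposed ``bootstrap'' $|y_{\nu(i)}|\le 2|y_i|$ cannot be repaired to achieve this; the global, cycle-based bookkeeping is essential.
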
	

\begin{proof}
Without loss of generality we assume that $\nu$ has exactly one cycle, since 
otherwise we just sum over the cycles and increase the respective $m$ if needed. 
Moreover, the result is trivial for a cycle of length 1. 

It is a basic fact that 
\begin{equation}\label{eq:basic}
|z_i-z_j|^q\leq (|y_i|+|y_j|)^q\leq 2^{q-1}(|y_i|^q+|y_j|^q)
\end{equation}
whenever $i<j$ and $\nu(i)>\nu(j)$ or $i>j$ and $\nu(i)<\nu(j)$, i.e., if the order 
is flipped. Furthermore, this bound is still true when $|y_\cdot|$ is replaced by 
$|y_\cdot|\wedge m$.  

We call the ordered sequence $\nu(i),\nu^2(i),\ldots$ the successors of $i$. 
For each $i$ satisfying $i<\nu(i)$, we define:
\[
b(i)\text{ is the first successor of }i\text{ such that }\nu(b(i))<\nu(i)\leq b(i),
\]
and note that $b(i)$ is well defined, see Example~\ref{ex:permutation}.
From~\eqref{eq:basic} we have the bound:
\begin{align*}
|z_{i}-z_{\nu(i)}|^q\leq |z_{i}-z_{b(i)}|^q
\leq 2^{q-1}(|y_{i}|^q\wedge m^q+|y_{b(i)}|^q\wedge m^q).
\end{align*}
The case $i>\nu(i)$ is analogous but with inequalities reversed in the 
definition of $b(i)$. By summing up over all $i$ we get the upper bound for 
$\sum_{i\le n}|z_i-z_{\nu(i)}|^q$. This bound needs to be reduced since the 
same $b$ may appear multiple times. 

Suppose $i_1,\ldots,i_k$ with $k>1$ are all the indices with
\[
b^*=b(i_1)=\cdots=b(i_k).
\]
Without loss of generality we assume that $b^*>\nu(b^*)$ and so $i_j<\nu(i_j)$ 
for all $j=1,\ldots,k$. Moreover, let the numbering be such that the path from 
$i_1$ to $b^*$ passes through $i_2,\ldots,i_k$ in this order. Note that 
$i_2<\nu(i_1)$ implies that $b(i_1)$ occurs before $i_2$, a contradiction. 
Thus we have
\[
i_1<\nu(i_1)\leq i_2<\nu(i_2)\leq \cdots<i_k<\nu(i_k)\leq b^*.
\]
Hence, it holds that 
\[
|z_{i_1}-z_{\nu(i_1)}|^q+\cdots+|z_{i_k}-z_{\nu(i_k)}|^q\leq |z_{i_1}-b^*|^q,
\]
implying that only one term 
$2^{q-1}(|y_{i_1}|^q\wedge m^q+|y_{b^*}|^q\wedge m^q)$ out of $k$ is 
necessary. The proof is now complete. 
\end{proof}

Note that the constant $2^q$ in front of the upper bound can not be 
reduced in general. For example, let $q=n=2$ and 
$z_1=0,z_2=1,y_1=1/2+\epsilon,y_2=-y_1$ with some $\epsilon>0$. 
Then $z_1+y_1>z_2+y_2$ and the bound reads $2\leq 2(1+2\epsilon)^2$.

\begin{example}\label{ex:permutation}\rm
	Consider the permutation: $1\to 2\to 4\to 3\to 5\to 1$. 
	The summary of indices is given below:	
	\begin{center}
		\begin{tabular}{c|c|c|c}
			$i$ & direction &$b(i)$&$\# y_i$ in the bound\\
			\hline
			$1$ & $\rightarrow$ &5&2\\
			$2$ & $\rightarrow$ &4&1\\
			$3$ & $\rightarrow$ &5&1\\
			$4$ & $\leftarrow$ &3&2\\
			$5$ & $\leftarrow$ &1&2\\		
		\end{tabular}  
	\end{center}
	Note that the pair $(3,5)$ was not used in the construction of our bound.
\end{example}

Finally, we need some estimates for the L\'evy processes $Y$.
\begin{lemma}\label{lem:levy}
The following statements hold for any L\'evy process $Y$ without 
Brownian component:
\begin{itemize}
\item[\normalfont(a)] For any positive decreasing sequence $a_n\downarrow 0$ 
satisfying $a_n\sqrt n\to\infty$, we have the limit $\p(|Y_{1/n}|>a_n)\to 0$ 
and, for sufficiently large~$n$, the following bound holds: 
\[
n\p(|Y_{1/n}|>a_n)\ge\frac{1}{2}\ov\Pi(2a_n).
\]
\item[\normalfont(b)] For any $p\in(\beta^*,2]\cup\{2\}$, we have 
\[
n^{2-p/2}\e\left(Y_{1/n}^2\wedge \frac{\log n}{n}\right)\to 0.
\]
\item[\normalfont(c)] If $\beta^*<2$ then 
\[
\sqrt {n\log n}\e(|Y_{1/n}|\wedge 1)\to 0.
\]
\end{itemize}
\end{lemma}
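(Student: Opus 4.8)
The plan is to prove Lemma~\ref{lem:levy} by treating its three parts separately, exploiting the standard small-time asymptotics of \levy processes and truncation at level $1$ (equivalently, the decomposition into small and large jumps).

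\textbf{Part (a).}
First I would write $Y = Y' + Y''$ where $Y''$ is the compound Poisson process of jumps of absolute size larger than $2a_n$ (which depends on $n$) and $Y'$ collects the remaining (small) jumps together with the drift. On the event $\{|Y_{1/n}|>a_n\}$ either $Y''$ has a jump on $[0,1/n]$ or $|Y'_{1/n}|>a_n$. The first event has probability $\le \tfrac1n\ov\Pi(2a_n)\to 0$ since $\ov\Pi(2a_n)=o(n)$ by the assumption $a_n\sqrt n\to\infty$ (indeed $a_n^2\ov\Pi(2a_n)$ is bounded because $\beta^*\le 2$, so $\ov\Pi(2a_n)=O(a_n^{-2})=o(n)$). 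For the second, a Chebyshev/Markov bound on the truncated process using $\e\big((Y'_{1/n})^2\big)=O(1/n)$ times $\ov\Pi(2a_n)$-type terms, or simply $\p(|Y'_{1/n}|>a_n)\le \e\big((Y'_{1/n})^2\big)/a_n^2 = O(1/(n a_n^2))\to 0$, gives the limit $\p(|Y_{1/n}|>a_n)\to 0$. For the lower bound, I would use inclusion–exclusion: the probability that $Y''$ has \emph{exactly one} jump on $[0,1/n]$ and that this jump alone pushes $|Y_{1/n}|$ over $a_n$. A jump of size $>2a_n$ combined with $|Y-(\text{that jump})|\le a_n$ on the interval gives $|Y_{1/n}|>a_n$; the probability of exactly one big jump is $\tfrac1n\ov\Pi(2a_n)e^{-\ov\Pi(2a_n)/n}$ and the conditional probability that the rest stays within $a_n$ tends to $1$, so for large $n$ this is $\ge \tfrac1{2n}\ov\Pi(2a_n)$, giving $n\p(|Y_{1/n}|>a_n)\ge\tfrac12\ov\Pi(2a_n)$.

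\textbf{Part (b).}
Here I would split $\e\big(Y_{1/n}^2\wedge \tfrac{\log n}{n}\big)$ according to whether $|Y_{1/n}|\le \sqrt{\log n/n}$ or not, i.e. bound it by $\e\big(Y_{1/n}^2;|Y_{1/n}|\le\sqrt{\log n/n}\big) + \tfrac{\log n}{n}\p\big(|Y_{1/n}|>\sqrt{\log n/n}\big)$. Fix $p\in(\beta^*,2]$, so $\int_{(-1,1)}|x|^p\Pi(\D x)<\infty$. Standard small-time estimates (e.g.\ via the \levy–Khintchine exponent or the moment bounds in~\cite{jacod}) give $\e\big(|Y_{1/n}|^p\wedge 1\big)=O(1/n)$, hence $\e\big(Y_{1/n}^2;|Y_{1/n}|\le\delta_n\big)\le \delta_n^{2-p}\e\big(|Y_{1/n}|^p\wedge 1\big)$ with $\delta_n=\sqrt{\log n/n}$, which is $O\big((\log n/n)^{(2-p)/2}\cdot n^{-1}\big)$; multiplying by $n^{2-p/2}$ yields $O\big((\log n)^{(2-p)/2} n^{-p/2+p/2}\big)$—careful bookkeeping of the exponents shows this is $o(1)$ (the polynomial gain beats the logarithm). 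Similarly $\p\big(|Y_{1/n}|>\delta_n\big)=O\big(n\e(|Y_{1/n}|^p\wedge 1)\delta_n^{-p}\big)\cdot n^{-1}$-type bounds give the second term times $n^{2-p/2}$ also $o(1)$. I'd then note the $p=2$ case is immediate (even without $\beta^*<2$) since $\e\big(Y_{1/n}^2\wedge\cdot\big)\le\e\big(Y_{1/n}^2\wedge 1\big)=O(1/n)$ when $Y$ has a finite second moment after truncation, or handle it by the same truncation when it does not, using that $Y_{1/n}^2\wedge\tfrac{\log n}n\le\tfrac{\log n}n\1{|Y_{1/n}|>\delta_n}+\delta_n^2$ and $n\cdot\big(\tfrac{\log n}n\p(\ldots)+\delta_n^2\big)\to 0$ once divided appropriately—in fact for $p=2$ the prefactor is $n$, and $n\delta_n^2=\log n$, so one must instead use $Y_{1/n}^2\wedge\tfrac{\log n}n$ directly with the truncation at a smaller scale; the cleanest route is $\e(Y_{1/n}^2\wedge\tfrac{\log n}n)\le \e(Y'_{1/n}{}^2)+\tfrac{\log n}n\p(\text{big jump})\le C/n$, giving $n\cdot C/n=C$, which is bounded but not $o(1)$—so I'd sharpen this using dominated convergence: the integrand $n(Y_{1/n}^2\wedge\tfrac{\log n}n)$ tends to $0$ in probability and is uniformly integrable, hence the expectation vanishes.

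\textbf{Part (c).}
For $\beta^*<2$ pick $p\in(\beta^*,2)$. By the same truncation, $\e(|Y_{1/n}|\wedge 1)\le \e(|Y'_{1/n}|) + \p(\text{big jump at scale }1)$. The big-jump term is $\le \tfrac1n\ov\Pi(1)=O(1/n)$, and $\sqrt{n\log n}\cdot O(1/n)=O(\sqrt{\log n/n})\to 0$. For the small-jump part, $\e(|Y'_{1/n}|)\le \e(|Y'_{1/n}|^p)^{1/p}=O(n^{-1/p})$ by Part~(b)-type moment bounds, so $\sqrt{n\log n}\cdot n^{-1/p}=\sqrt{\log n}\,n^{1/2-1/p}\to 0$ precisely because $p<2$ forces $1/p>1/2$. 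One must also absorb the drift of $Y'$; when $\beta^*<1$ there is a genuine drift $\gamma_0$ and $\e(|Y'_{1/n}|)\le |\gamma_0|/n + (\text{martingale part})$, still $O(1/n)+O(n^{-1/p})$; when $\beta^*\in[1,2)$ one centres $Y'$ by its compensator and the same $L^p$ bound applies.

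The main obstacle I anticipate is the borderline bookkeeping in Part~(b) for $p$ close to $2$ and especially at $p=2$: the polynomial decay gained from truncation at scale $\sqrt{\log n/n}$ barely dominates the logarithmic loss, so one must be precise about constants and may need a uniform-integrability/dominated-convergence argument rather than a crude moment bound to get genuine convergence to $0$ rather than mere boundedness. The small-time moment estimates $\e(|Y_{1/n}|^p\wedge 1)=O(1/n)$ for $p>\beta^*$ are the technical workhorse throughout, and I would cite~\cite{jacod} or derive them from the \levy–Khintchine representation; the rest is elementary truncation and Markov's inequality.
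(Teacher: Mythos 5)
Your parts (a) and (c) are sound and close in spirit to the paper's argument: part (a) is the same one-big-jump lower bound as the paper's Lemma~\ref{lem:lower_boundY} plus a Chebyshev bound on the truncated part as in Lemma~\ref{lem:trunc_mom}, and part (c) via $L^p$ moments and Jensen is a valid alternative to the paper's squared truncated-moment estimate, provided you take $p\in[1,2)\cap(\beta^*,2)$ so that Jensen points the right way. The genuine gap is in part (b), and your own exponent arithmetic exposes it. You bound $\e\big(Y_{1/n}^2;|Y_{1/n}|\le\delta_n\big)\le\delta_n^{2-p}\,\e\big(|Y_{1/n}|^p\wedge 1\big)=O\big(\delta_n^{2-p}/n\big)$ with $\delta_n=\sqrt{\log n/n}$ and the \emph{same} $p$ as in the statement; multiplying by $n^{2-p/2}$ gives exactly $(\log n)^{(2-p)/2}\,n^{0}$, which diverges for every $p<2$ --- there is no ``polynomial gain'' left to beat the logarithm. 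The tail term fares no better: $n^{2-p/2}\cdot\tfrac{\log n}{n}\cdot\delta_n^{-p}\,O(1/n)=(\log n)^{1-p/2}\to\infty$. The missing idea is to sacrifice a little exponent: fix $q\in(\beta^*,p)$ with $I_q<\infty$ and interpolate/truncate at level $q$ instead of $p$; every term then picks up a factor $n^{(q-p)/2}\to 0$ which absorbs all logarithmic losses. This is precisely what the paper does, applying Lemma~\ref{lem:trunc_mom} with exponent $2$, $K=\log n/n$ and $x_n=\sqrt{\log n/n}$, and then Lemma~\ref{lem:BG-bounds} with some $q<p$.

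Two further soft spots. First, your $p=2$ case ends with an unproven uniform-integrability claim for $n\big(Y_{1/n}^2\wedge\tfrac{\log n}{n}\big)$, which is essentially equivalent to the statement being proved; the paper instead closes this case with the elementary facts $v(x)\to0$ and $x^2\ov\Pi(x)\to0$ as $x\downarrow0$. Second, your workhorse $\e\big(|Y_{1/n}|^p\wedge1\big)=O(1/n)$ is false for $p<1$ whenever $Y$ carries a drift or a compensator contribution (already $Y_t=t$ gives $t^p\gg t$), so the interpolation exponent must either be taken $\ge1$ or the drift must be split off and controlled separately, as the $m(x)^2t^2$ term in Lemma~\ref{lem:trunc_mom} does.
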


The proof is based on some standard techniques and is deferred to 
\S\ref{sec:levy} in order to keep the presentation focused.

\subsection{Proofs of the main results}
In the following we say that events $(A_n)_{n\in\N}$ have high probability 
(for all large $n$) if $\p(A_n)\to 1$. Clearly, any finite collection of events 
with high probability 
jointly have 
high probability.

\begin{proof}[Proof of Theorem~\ref{thm:gral_rate}]
Recall that it is left to consider the quantities in~\eqref{eq:toprove}.
Lemma~\ref{lem:normal} implies that $\sum_{i\le n}\tilde\xi^2_{ni}\cip 0$ 
for any $p>0$ because the sum can be reordered so that both $Z$ and $Z'$ 
appear in increasing order. 
Thus is is left to (i) show $\sum_{i\le n}\hat\xi^2_{ni}\cip 0$ for $p>\beta_*$ 
(and $p=2$ when $\beta^*=2$), and (ii) to disprove this for $p\in(0,\beta_*)$.
For $p=0$ the convergence in~\eqref{eq:thm} always fails as a consequence of presence of $n$ independent scaled Brownian bridges between the grid points, see also~\eqref{eq:Wskeleton}.
\bigskip

Part (i).
By standard extreme value theory~\cite[(3.65)]{EMK} we have
\[
M_n-2\sqrt{2\log n}\cip 0, 
\qquad \text{where}\quad
M_n=\max_{i\le n} Z_i-\min_{i\le n} Z_i.
\]
According to~\eqref{eq:order} and Lemma~\ref{lem:permutation} there is the bound
\begin{equation*}
\sum_{i\le n}\hat\xi^2_{ni}
=n^{-p/2}\sum_{i\le n} \big(Z_i-Z_{\nu(i)}\big)^2
\leq 4n^{-p/2}\sum_{i\le n}\left(\frac{n}{\sigma^2}\left(\Delta^n_iY\right)^2
\wedge M^2_n\right).
\end{equation*}
With high probability $M^2_n<9\log n$ for all large~$n$. 
Moreover, by Lemma~\ref{lem:levy}(b),
\[
\e\bigg[ n^{-p/2}\sum_{i\le n}\Big(n\big(\Delta^n_iY\big)^2
	\wedge \log n\Big)\bigg]
=n^{1-p/2}\e\Big(nY_{1/n}^2
	\wedge \log n\Big)\to 0
\]
whenever $p>\beta^*$ or $p=2=\beta^*$. Hence we also have 
$\sum_{i\le n}\hat\xi^2_{ni}\cip 0$ for such a~$p$, proving the first claim.
\bigskip

Part (ii).
Assume that $p\in(0,\beta_*)$ and recall that $M_n<3\sqrt{\log n}$ with 
high probability for all large~$n$. Note that $\beta_*>0$ implies $\Pi(\R)=\infty$, 
and so $Y$ is not compound Poisson. Let $I$ be the set of indices $i$ such that 
\begin{equation}
\label{eq:N}\sqrt n |\Delta_i^nY|/\sigma>6\sqrt{\log n}.
\end{equation} 
The cardinality $N=|I|$ is Binomial$(n,p_n)$ distributed, where $p_n$ satisfies  
\[
np_n=n\p\Big(|Y_{1/n}|>6a_n/\sigma\Big)
\geq \frac{1}{2}\ov \Pi(ca_n), 
	\qquad \text{for }a_n=\sqrt{\frac{\log n}{n}},
\]
some $c>0$ and all large $n$, see Lemma~\ref{lem:levy}(a).
 This implies that $np_n\to\infty$ and so 
\[
N=np_n(1+o_\p(1)).
\]
Moreover, Lemma~\ref{lem:levy}(a) shows that $p_n\to 0$ and so $N/n\cip 0$. 

Let $N'$ be the analogue of $N$, but with $6$ replaced by $3$ in~\eqref{eq:N}.
From the definition of $\nu$ (see also~\eqref{eq:order}) and the above bound on 
$M_n$, we conclude that all $Z_{\nu(i)}$, $i\in I$, must be among the $N'$ largest 
or among the $N'$ smallest values of $Z_\cdot$ with high probability. 
As with $N$, we see that $N'/n\cip 0$ and thus 
$Z^{(N')}\cip -\infty$ and $Z^{(n-N')}\cip+\infty$. 
The corresponding $Z_i$, $i\in I$, however, are chosen independently of~$Y$ 
so by the law of large numbers, $\lceil N/2\rceil$ of their moduli $|Z_i|$ must be 
bounded above by $\Phi^{-1}(4/5)$ with high probability for all large~$n$. 
Finally, we get the following bound with high probability for all sufficiently 
large~$n$: 
\[
\sum_{i\le n}(Z_i-Z_{\nu(i)})^2\geq 3N\geq \ov \Pi(ca_n).
\]

Choose $q\in(p,\beta_*)$ and note that necessarily $x^q\ov \Pi(x)\to\infty$ as 
$x\downarrow 0$. Thus for some $c_1>0$ and all large $n$ we have the bound
\[
n^{-p/2}\ov\Pi(ca_n)
\geq c_1n^{-p/2}n^{q/2}(\log n)^{-q/2}\to\infty.
\]
This shows that
\[
\sum_{i\le n}\hat \xi_{ni}^2
=n^{-p/2}\sum_{i\le n}(Z_i-Z_{\nu(i)})^2\cip\infty,
\]
instead of convergence to~0. The proof is now complete.
\end{proof}

\begin{lemma}\label{lem:exch}
Let $(Z_1,\ldots, Z_n)$ be exchangeable and independent of $(Z'_1,\ldots,Z'_n)$. 
For any $1\leq i_1<\cdots<i_k\leq n$ and $y_1,\ldots,y_k\in\R$ define 
\[
\tilde Z_i=Z_i+\sum_{u=1}^k y_{u}\1{i=i_u},\qquad i=1,\ldots,n.
\]
Assume there are no ties a.s.\ and let $\nu$ and $\pi$ be permutations such that 
the orderings of $(\tilde Z_i)$, $(Z_{\nu(i)})$ and $(Z'_{\pi(i)})$ coincide. 
Then the sequence $((Z_i,Z_{\nu(i)},Z'_{\pi(i)}))_{i\notin\{i_1,\ldots,i_k\}}$ of length 
$n-k$ is exchangeable.
\end{lemma}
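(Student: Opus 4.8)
The plan is to condition on the positions of $Z_{i_1},\ldots,Z_{i_k}$ among the order statistics of $(Z_1,\ldots,Z_n)$, and then show that, conditionally on this information, the remaining triples are built from an exchangeable recipe. First I would note that the permutations $\nu$ and $\pi$ depend on $(Z_1,\dots,Z_n)$ and $(\tilde Z_1,\dots,\tilde Z_n)$ only through the order statistics $Z_{(1)}<\cdots<Z_{(n)}$, the rank vector $R=(R_1,\dots,R_n)$ of $(\tilde Z_i)$ (equivalently the set of ranks occupied by $i_1,\dots,i_k$ together with their internal order), and the order statistics $Z'_{(1)}<\cdots<Z'_{(n)}$. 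Indeed, $Z_{\nu(i)}$ is simply $Z_{(R_i)}$ and $Z'_{\pi(i)}=Z'_{(R_i)}$. So the triple at index $i$ equals $(Z_i, Z_{(R_i)}, Z'_{(R_i)})$.

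Next I would exploit exchangeability of $(Z_1,\dots,Z_n)$: conditionally on the unordered set $\{Z_{(1)},\dots,Z_{(n)}\}$ of values, the assignment of which value goes to which index is a uniformly random bijection, and it is independent of $(Z'_{(1)},\dots,Z'_{(n)})$. Conditioning further on the event that the indices $i_1<\cdots<i_k$ occupy a prescribed set of ranks $\rho_1<\cdots<\rho_k$ in this bijection together with the prescribed internal order forced by $y_1,\dots,y_k$ (this event depends on the random bijection and on the ordered values, but the key point is that once we condition on it, the restriction of the bijection to the complementary index set $\{1,\dots,n\}\setminus\{i_1,\dots,i_k\}$ is a uniformly random bijection onto the complementary rank set $\{1,\dots,n\}\setminus\{\rho_1,\dots,\rho_k\}$). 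Under this conditioning, for $i\notin\{i_1,\dots,i_k\}$ the value $Z_i$ is $Z_{(\sigma(i))}$ for a uniform random bijection $\sigma$ from the $n-k$ leftover indices to the $n-k$ leftover ranks, and $R_i=\sigma(i)$ as well (since for leftover indices $\tilde Z_i=Z_i$ and the relative order among all the $\tilde Z$'s is pinned down once the ranks of the shifted coordinates are fixed). Hence the leftover triple at $i$ is $\bigl(Z_{(\sigma(i))},\,Z_{(\sigma(i))},\,Z'_{(\sigma(i))}\bigr)$ — wait, more carefully, $Z_{\nu(i)}=Z_{(R_i)}=Z_{(\sigma(i))}$ and $Z'_{\pi(i)}=Z'_{(R_i)}=Z'_{(\sigma(i))}$, so the triple is $(Z_{(\sigma(i))},Z_{(\sigma(i))},Z'_{(\sigma(i))})$ when $i$ is a leftover index, because for such $i$ one has $Z_i=Z_{(\sigma(i))}$. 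Since $\sigma$ is a uniformly random bijection and independent of the ordered value sequences, permuting the leftover indices permutes the $\sigma(i)$ uniformly, which leaves the joint law invariant — this is exchangeability of the $(n-k)$-term sequence, conditionally on the data we fixed. Averaging over that conditioning preserves exchangeability.

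The step I expect to be the main obstacle is the bookkeeping in the paragraph above: carefully verifying that, conditionally on the ranks occupied by $i_1,\dots,i_k$, the restriction of the identifying bijection to the complementary indices is genuinely uniform and independent of the ordered samples $Z_{(\cdot)}$ and $Z'_{(\cdot)}$. This is intuitively clear because the shifts $y_u$ act only on the $k$ special coordinates and the event "coordinates $i_1,\dots,i_k$ land in ranks $\rho_1,\dots,\rho_k$ in the required internal order" is, for fixed ordered values, a function of the random bijection alone that does not distinguish among the leftover indices; but making this rigorous requires writing the conditional law of the bijection explicitly (it is uniform on the set of bijections consistent with the prescribed partial assignment) and checking independence from the value sequences, which follows from the fact that exchangeability gives independence of the order statistics from the identifying bijection. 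Once this is in place, the conclusion is immediate.
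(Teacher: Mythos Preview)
Your strategy is essentially the paper's: reduce to order statistics plus a uniform random bijection, then argue that permuting the leftover indices leaves the law invariant. The paper phrases it slightly differently (it works directly with the sorting permutation $s$ of $\tilde Z$ and argues that $s^{-1}$, away from the special indices, is uniform), but the content is the same.

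There is, however, a concrete slip in your execution. You claim that for leftover $i$ one has $R_i=\sigma(i)$, and hence that the triple collapses to $(Z_{(\sigma(i))},Z_{(\sigma(i))},Z'_{(\sigma(i))})$, i.e.\ $Z_i=Z_{\nu(i)}$ for every leftover~$i$. This is false in general: the shifts $y_u$ move the special values $\tilde Z_{i_u}=Z_{(\rho_u)}+y_u$ to new positions, so the rank of $\tilde Z_i=Z_{(\sigma(i))}$ among \emph{all} the $\tilde Z$'s is not its rank among the $Z$'s. A one-line counterexample: $n=3$, $k=1$, $i_1=1$, $y_1$ huge; condition on $\rho_1=1$. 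Then the leftover ranks are $\{2,3\}$ but the leftover $\tilde Z$-ranks are $\{1,2\}$, so $R_i\neq\sigma(i)$.

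The fix is painless and does not change your conclusion. Under your conditioning (order statistics of $Z$ and $Z'$, together with the $Z$-ranks $\rho_1,\ldots,\rho_k$ of the special indices), the values $\tilde Z_{i_u}$ are fixed numbers. Hence for each leftover rank $r$, the position of $Z_{(r)}$ among all the $\tilde Z$'s is a deterministic number $f(r)$, and $f$ is a bijection from the leftover $Z$-ranks to the leftover $\tilde Z$-ranks. Thus $R_i=f(\sigma(i))$, and the triple at leftover $i$ is
\[
\big(Z_{(\sigma(i))},\,Z_{(f(\sigma(i)))},\,Z'_{(f(\sigma(i)))}\big),
\]
a fixed function of $\sigma(i)$ alone. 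Since $\sigma$ is a uniform bijection independent of the ordered samples, permuting the leftover indices permutes the $\sigma(i)$ uniformly and leaves the joint law invariant. That is exactly the exchangeability you want; averaging over the conditioning finishes the proof. The ``main obstacle'' you flag (conditional uniformity of $\sigma$) is indeed the right thing to verify, and your sketch for it is fine.
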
	

\begin{proof}
Any finite sequence of random variables is exchangeable if and only if it can 
be represented as arbitrary random variables that are independently and 
uniformly permuted, see e.g.~\cite[Prop~1.8]{MR2161313}. 
Therefore, we may prove the result by conditioning in a way that only the 
order of the variables $(Z_i)$ is random and then removing such conditioning. 
Thus, we henceforth assume that the sequence $(Z_i')$ is non-random 
and $(Z_i)$ is the result of uniformly permuting non-random numbers. 

Let the permutation $s$ be such that $\tilde Z_{s(1)}<\cdots<\tilde Z_{s(n)}$. 
The permutation $s^{-1}$ maps $s(i_u)$ to $i_u$ for $u=1,\ldots,k$ and is 
otherwise independently and uniformly distributed. 
The sequences $(Z_{\nu(i)})$ and $(Z'_{\pi(i)})$ are obtained by sorting 
$(Z_i)$ and $(Z'_i)$ in increasing order and then permuting according 
to~$s^{-1}$. We conclude that the law of the sequence 
$((Z_i,Z_{\nu(i)},Z'_{\pi(i)}))_{i\notin\{i_1,\ldots,i_k\}}$ is invariant under 
uniform permutations of $\{1,\ldots,n\}\setminus\{i_1,\ldots,i_k\}$, 
completing the proof.
\end{proof}	

\begin{proof}[Proof of Proposition~\ref{prop:CPP}]
As in the Proof of Theorem~\ref{thm:gral_rate}, we consider the increments 
$\xi_{ni}$ but with the scaling $\sqrt{n/(2\log n)}$.
Let $Z^{(i)}$ be the corresponding order statistics and recall that 
(see~\cite[Thm~1]{MR808861}), as $n\to\infty$, 
\[
Z^{(n)}-\sqrt{2\log n}\cip 0
\qquad\text{ and }\qquad
\Delta_n:=\max_{i<n}(Z^{(i+1)}-Z^{(i)})\cip 0.
\]

First, we focus on the partial sum process corresponding to
\[
\hat\xi_{ni}=(2\log n)^{-1/2}(Z_i-Z_{\nu(i)}).
\]  
We will work on the event $\{N_\pm=k_\pm\}$ for $k=k_++k_-\geq 1$, where 
$N_\pm$ is the number of positive/negative jumps~$J_\cdot$; the case of no jumps 
is trivial. Now the following is true for large $n$ with probability arbitrarily close 
to~1. The indices $\lceil T_j n\rceil$ must be different (the set of such is denoted 
by $I$), every $\sqrt n|J_j|/\sigma$ must be larger than $Z^{(n)}-Z^{(1)}$, and the 
latter is smaller than $3\sqrt{\log n}$. Hence for each $i=\lceil nT_j\rceil\in I$ the 
quantity $Z_i+\sqrt n\Delta_i^nY/\sigma$ must be among $k_+$ largest if the 
corresponding $J_j>0$ or $k_-$ smallest if $J_j<0$. Thus 
\[
(2\log n)^{-1/2}Z_{\nu(i)}\cip \pm 1
\]
according to the sign of the respective jump $J_j$. However, the variables 
$Z_i$ do not depend on the choice of indices~$i$, so
\[
\hat\xi_{ni}\cip-\sign(J_j),\qquad i/n=\lceil T_jn\rceil/n\cip T_j,
\]
where $j$ is the corresponding jump index. It is thus left to show that
the partial sum process of $\hat\xi_{ni}$ with $i\in I$ excluded converges in probability to $(k_+-k_-)t$ in supremum norm.
But the vector $\hat\xi_{ni},i\notin I$ is also exchangeable, see Lemma~\ref{lem:exch}, and so according to~\cite[Thm~3.13]{MR2161313} it is sufficient to show that
\begin{equation}\label{eq:sumsto0}
\sum_{i\notin I}\hat\xi_{ni}\cip k_+-k_-
\qquad \text{and} \qquad\sum_{i\notin I}\hat\xi^2_{ni}\cip 0.
\end{equation}

Since we only need to look at the sums, we may permute the indices arbitrarily. 
In this paragraph we assume that $Z_\cdot$ is an increasing sequence, and that the 
elements of $I$ are given by $i_1<\cdots<i_k$. For $i>i_k$ we have $\nu(i)=i-k_+$, 
for $i<i_1$ we have $\nu(i)=i+k_-$ and between any two $i_j$ and $i_{j+1}$, 
the permutation $\nu$ displaces every index 
a fixed amount bounded by $k$. Furthermore, the indices $i_j$ are chosen uniformly 
at random (and then sorted), implying $(Z_{i_k}-Z_{i_1})/\sqrt{2\log n}\cip 0$. 
Thus 
\[
\sum_{i\notin I,\, i_1<i<i_k}|\hat\xi_{ni}|
\le k\frac{Z_{i_k}-Z_{i_1}}{\sqrt{2\log n}}\cip 0,
\]
\[
\sum_{i<i_1}\hat\xi_{ni}
= \sum_{j\le k_-}\frac{Z_j-Z_{i_1+j-1}}{\sqrt{2\log n}}\cip -k_-
\enskip\text{and}\enskip
\sum_{i>i_k}\hat\xi_{ni}
= \sum_{j\le k_+}\frac{Z_{n-j+1}-Z_{i_k-j+1}}{\sqrt{2\log n}}\cip k_+,
\]
which yield the first limit in~\eqref{eq:sumsto0}. A simple induction on $k$ 
shows that the bound $\sum_{i\notin I}|Z_i-Z_{\nu(i)}|\le k(Z_n-Z_1)$ holds, 
establishing the second limit in~\eqref{eq:sumsto0}:
\[
\sum_{i\notin I}\hat\xi^2_{ni}\le 
	\frac{\Delta_n}{2\log n}\sum_{i\notin I}|Z_i-Z_{\nu(i)}|
\le k\Delta_n \frac{Z_n-Z_1}{{2\log n}} \cip 0.
\]	

It remains to show that the partial sums of $\tilde \xi_{ni}$ vanish in probability. 
Observe that the sequence $\tilde \xi_{ni}$ need not be exchangeable. Nevertheless, 
we may condition on the number of jumps and note that $(\tilde \xi_{ni})_{i\notin I}$ 
(of length $n-k$) is exchangeable. Indeed, we need only apply Lemma~\ref{lem:exch} 
after conditioning on the ordered values of $Z$ and $Z'$.
Now $\sum_{i\notin I}\tilde\xi_{ni}^2\leq \sum_{i\le n}\tilde\xi_{ni}^2\cip 0$ according to Lemma~\ref{lem:normal}. Moreover,
\[
\sum_{i\notin I}\tilde\xi_{ni}=-\sum_{i\in I}\tilde\xi_{ni}\cip 0,
\]
because for $i\in I$, both $Z_{\nu(i)}$ and $Z'_{\pi(i)}$ become 
$\pm\sqrt{2\log n}+o_\p(1)$ (with the same sign) and hence $\tilde \xi_{ni}\cip 0$. 
This yields 
\[
\sum_{i\notin I}\tilde\xi_{ni}\cip 0,\qquad \sum_{i\in I}\tilde\xi_{ni}\cip 0,
\]
completing the proof.
\end{proof}

\begin{proof}[Proof of Proposition~\ref{prop:nosigma}]
Note that the bivariate increments $\xi_{ni}=(\Delta_i^n X,\Delta_i^n W^\n)$ are 
exchangeable. Moreover, the partial sums of the first coordinate corresponds to the 
process $X$ observed on the grid $1/n,\ldots,1$, and those of the second coordinate 
correspond to some Brownian motion (dependent on $X$) observed on the same grid. 
Now we apply~\cite[Thm~3.13]{MR2161313} to each coordinate separately, 
and then jointly. It is only required to show that the cross-variation vanishes:
\[
\sum_{i\leq n}(\Delta_i^n X)(\Delta_i^n W^\n)\cip 0.
\]

Recall that $\max_{i\le n} |\Delta_i^n W'|=\Oh_\p(\sqrt{\log n/n})$, and hence we 
are done in the case when $X$ has bounded variation on compacts. In general, 
by~\cite[Thm~2.3]{jacod_asymptotic}, it is sufficient to show that 
\[
\sqrt {n\log n}\e(|X_{1/n}|\wedge 1)\to 0,
\]
 so Lemma~\ref{lem:levy}(c) completes the proof. 
\end{proof}

\section{Some estimates for L\'evy processes}\label{sec:levy}
This section is devoted to some basic bounds for L\'evy processes at small times. 
Here we prove the three statements in Lemma~\ref{lem:levy}, and also lay foundations needed in the proofs underlying Method~II in \S\ref{sec:proofsII}.
Recall that $(\gamma, 0,\Pi)$ is the L\'evy triplet of $Y$ having no Brownian part. 
For any $x\in(0,1]$ define the standard quantities: 
\[
m(x)=\gamma-\int_{x\leq |y|<1}y\Pi(\D y),\qquad
v(x)=\int_{|y|< x}y^2\Pi(\D y).
\]
In the case when $Y$ has bounded variation on compacts we can express the linear drift as $\gamma_0=m(0)$.
We also let 
\[
Y_t=m(x)t+J_t^{x,1}+J_t^{x,2}
\] 
be the \levy-It\^o decomposition of $Y$, 
where $J_t^{x,1}$ is the martingale corresponding to the compensated jumps 
of $Y$ of magnitude less than $x$ and $J_t^{x,2}$ is driftless compound 
Poisson process containing all jumps of $Y$ of magnitude at least~$x$. 
In particular, $\e[(J_t^{x,1})^2]=v(x)t$. Finally, we consider the integrals 
\[
I_q=\int_{(-1,1)}|x|^q\Pi(\D x),\qquad q\ge0.
\] and recall the following  useful lemma 
(see, e.g.~\cite[Lem.~9]{LevySupSim}).
\begin{lemma}\label{lem:BG-bounds}
If $I_q<\infty$ for some $q\in[0,2]$, then for any $x\in(0,1]$, we have 
\[
\ov\Pi(x)\le\ov\Pi(1)+I_q x^{-q},\quad
|m(x)|\le |\gamma|+I_q x^{-(q-1)^+},\quad
v(x)\le I_q x^{2-q}.
\]
\end{lemma}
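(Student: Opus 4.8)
The plan is to establish the three inequalities one at a time, in each case splitting the relevant integral over the regions $\{|y|\ge1\}$, $\{x\le|y|<1\}$ and $\{|y|<x\}$ as appropriate, and then replacing $|y|$ raised to a given exponent by $|y|^q$ times a suitable power of $x$. Before anything else I would observe that all three quantities are well-defined for $x\in(0,1]$: the tail $\ov\Pi(x)$ is finite for every $x>0$, so $\Pi$ restricted to $\{x\le|y|<1\}$ is a finite measure and the integral defining $m(x)$ converges absolutely.

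For the tail bound I would write $\ov\Pi(x)=\ov\Pi(1)+\Pi(\{x\le|y|<1\})$ and note that on $\{x\le|y|<1\}$ one has $1\le(|y|/x)^q=|y|^qx^{-q}$, so $\Pi(\{x\le|y|<1\})\le x^{-q}\int_{x\le|y|<1}|y|^q\,\Pi(\D y)\le x^{-q}I_q$, which gives the first inequality. For $v(x)$, on $\{|y|<x\le1\}$ the condition $2-q\ge0$ gives $y^2=|y|^q|y|^{2-q}\le|y|^qx^{2-q}$, whence $v(x)\le x^{2-q}\int_{|y|<x}|y|^q\,\Pi(\D y)\le x^{2-q}I_q$ after integrating; this is the third inequality.

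For $m(x)$ I would start from $|m(x)|\le|\gamma|+\int_{x\le|y|<1}|y|\,\Pi(\D y)$ and distinguish the cases $q\le1$ and $q>1$. When $q\le1$, using $|y|<1$ gives $|y|\le|y|^q$, so the integral is bounded by $I_q=I_qx^{-(q-1)^+}$; when $q>1$, using $|y|\ge x$ gives $|y|\le|y|^qx^{-(q-1)}$, so the integral is bounded by $x^{-(q-1)}I_q=I_qx^{-(q-1)^+}$. Combining the two cases yields the middle bound. Every step is an elementary pointwise comparison of powers of $|y|$ followed by monotonicity of the integral, so there is essentially no obstacle; the only point that needs a little attention is precisely this case split $q\lessgtr1$, which is what forces the exponent $(q-1)^+$ rather than $q-1$.
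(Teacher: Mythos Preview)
Your proof is correct in every detail. The paper itself does not prove this lemma at all; it simply cites it as \cite[Lem.~9]{LevySupSim}, so your elementary argument---splitting the ranges of integration and comparing powers of $|y|$ pointwise---actually supplies what the paper omits.
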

Note that we may always choose $q=2$, and even $q=1$ when $Y$ is of bounded 
variation on compacts.
Next, we establish some estimates on the truncated moments.
\begin{lemma}\label{lem:trunc_mom}
For any $p\in(0,2]$, $K>0$, $t>0$ and $x\in(0,1)$, we have
\begin{align*}
\e(|Y_t|^p\wedge K)
&\le(m(x)^2 t^2 + v(x) t)^{p/2}+ K\ov\Pi(x)t,
\\
\p(|Y_t|\geq K)
&\le (m(x)^2 t^2 + v(x) t)/K^{2}+ \ov\Pi(x)t.
\end{align*}
\end{lemma}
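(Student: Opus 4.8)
\textbf{Proof proposal for Lemma~\ref{lem:trunc_mom}.} The plan is to use the L\'evy--It\^o decomposition $Y_t=m(x)t+J^{x,1}_t+J^{x,2}_t$ already set up above and to quarantine the large jumps. First I would introduce the event $A=\{J^{x,2}$ has no jump on $[0,t]\}$. Since $J^{x,2}$ is a driftless compound Poisson process whose jump rate is $\ov\Pi(x)<\infty$, the number of its jumps on $[0,t]$ is Poisson with parameter $\ov\Pi(x)t$, so $\p(A^c)=1-e^{-\ov\Pi(x)t}\le\ov\Pi(x)t$. Two structural facts will then be used repeatedly: $A$ depends only on the restriction of the jump measure to $\{|y|\ge x\}$ and is therefore independent of $J^{x,1}$; and on $A$ we have $Y_t=m(x)t+J^{x,1}_t$, for which $\e[m(x)t+J^{x,1}_t]=m(x)t$ and, because $J^{x,1}$ is a centred martingale with $\e[(J^{x,1}_t)^2]=v(x)t$, the second-moment identity $\e[(m(x)t+J^{x,1}_t)^2]=m(x)^2t^2+v(x)t$ holds.

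For the tail bound I would split $\p(|Y_t|\ge K)\le\p(\{|Y_t|\ge K\}\cap A)+\p(A^c)$. On $A$ the first event equals $\{|m(x)t+J^{x,1}_t|\ge K\}$, and Chebyshev's inequality together with the second-moment identity bounds its probability by $(m(x)^2t^2+v(x)t)/K^2$; adding $\p(A^c)\le\ov\Pi(x)t$ gives the claim.

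For the truncated moment I would write $\e(|Y_t|^p\wedge K)\le\e\big[|Y_t|^p\1{A}\big]+K\,\p(A^c)$. Using the independence of $A$ and $J^{x,1}$, $\e\big[|Y_t|^p\1{A}\big]=\p(A)\,\e\big[|m(x)t+J^{x,1}_t|^p\big]\le\e\big[|m(x)t+J^{x,1}_t|^p\big]$, and since $p/2\le1$ the function $u\mapsto u^{p/2}$ is concave, so Jensen's inequality yields $\e\big[|m(x)t+J^{x,1}_t|^p\big]=\e\big[\big((m(x)t+J^{x,1}_t)^2\big)^{p/2}\big]\le\big(m(x)^2t^2+v(x)t\big)^{p/2}$. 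Combining with $K\,\p(A^c)\le K\ov\Pi(x)t$ completes the argument.

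I do not expect a genuine obstacle here: the proof is a combination of Chebyshev's inequality, concavity (this is the one place $p\le 2$ is essential), and a union bound over the large-jump event. The only point demanding care is the bookkeeping around the L\'evy--It\^o split, namely the independence of $A$ and $J^{x,1}$ and the fact that $Y$ coincides with $m(x)t+J^{x,1}_t$ on $A$, so that all the Gaussian-type moment control is available.
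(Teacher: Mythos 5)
Your proposal is correct and follows essentially the same route as the paper: the same event $A$ of no large jumps, the bound $\p(A^c)\le\ov\Pi(x)t$, and Jensen's inequality via concavity of $u\mapsto u^{p/2}$ applied to $m(x)t+J^{x,1}_t$. The only cosmetic differences are that the paper obtains the tail bound by applying the first inequality with $p=2$ and Markov's inequality rather than by a direct Chebyshev argument on $A$, and that it simply drops the indicator $\1{A}$ (using $\1{A}\le 1$) where you invoke independence of $A$ and $J^{x,1}$; both variants are fine.
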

\begin{proof}
Fix $t>0$ and define the event $A=\bigcap_{s\le t}\{J_s^{x,2}=0\}$ of not 
observing any jump from $J_s^{x,2}$ on the time interval $[0,t]$. 
Clearly $1-\p(A)=1-e^{-\ov\Pi(x)t}\le \ov\Pi(x)t$. 
Consider the elementary inequality
$|Y_t|^p\wedge K\le |m(x)t+J_t^{x,1}|^p1_{A}+K1_{A^c}$. Taking expectations 
and applying Jensen's inequality we obtain the bound
\[
\e\big(|Y_t|^p\wedge K\big)
\le \big(m(x)^2t^2 + \e\big[(J_t^{x,1})^2\big]\big)^{p/2}
+ K(1-\p(A)),
\]
because $\e J_t^{x,1}=0$. The first inequality readily follows. Using Markov's inequality we readily get
\[
\p(|Y_t|\geq  K)
=\p(|Y_t|\wedge K\geq  K)\leq \e(Y_t^2\wedge K^2)/K^2
\]
and the second result follows from the first with $p=2$. 
\end{proof}

\begin{lemma}\label{lem:lower_boundY}
For any $\epsilon>0$ and $a_t\downarrow 0$ satisfying $a_t/\sqrt t\to \infty$ 
as $t\downarrow 0$ we have
\[ 
\liminf_{t\downarrow 0}\frac{\p(|X_t|>a_t)}{t\ov\Pi(a_t(1+\epsilon))}
\ge 1.
\]
\end{lemma}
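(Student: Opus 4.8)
The plan is to lower bound $\p(|X_t|>a_t)$ by the probability of a single event: that $X$ makes exactly one ``large'' jump, of magnitude at least $a_t(1+\epsilon)$, on $[0,t]$, while the remainder of the path stays below $\epsilon a_t$ in absolute value. On such an event $|X_t|>a_t$ by the triangle inequality, and the probability of seeing exactly one such jump is asymptotically $t\,\ov\Pi(a_t(1+\epsilon))$. Concretely, I would fix $\epsilon>0$, set $x=x_t:=a_t(1+\epsilon)$ (so $x_t\downarrow0$ and $x_t\in(0,1]$ for all small $t$, where Lemma~\ref{lem:BG-bounds} applies), and use the L\'evy--It\^o decomposition
\[
X_t=R_t+J^{x,2}_t,\qquad R_t:=m(x)t+J^{x,1}_t+\sigma W_t,
\]
where $J^{x,2}$ collects all jumps of $Y$ of magnitude at least $x$ and is independent of $R_t$. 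Letting $E_1$ be the event that $J^{x,2}$ has exactly one jump $\Delta$ on $[0,t]$, one has $\p(E_1)=\ov\Pi(x)\,t\,e^{-\ov\Pi(x)t}$, and on $E_1$ the jump satisfies $|\Delta|\ge x$ and $J^{x,2}_t=\Delta$.

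Next I would record the inclusion $\{|X_t|>a_t\}\supseteq E_1\cap\{|R_t|<\epsilon a_t\}$, valid because on the right-hand event $|X_t|\ge|\Delta|-|R_t|>a_t(1+\epsilon)-\epsilon a_t=a_t$. Since $E_1$ depends only on $J^{x,2}$ while $R_t$ depends only on $J^{x,1}$ and $W$, the two events are independent, whence
\[
\frac{\p(|X_t|>a_t)}{t\,\ov\Pi(x)}\ \ge\ e^{-\ov\Pi(x)t}\;\p\big(|R_t|<\epsilon a_t\big),
\]
so it remains to check that both factors on the right tend to $1$ as $t\downarrow0$.

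For the first factor I would apply Lemma~\ref{lem:BG-bounds} with $q=2$ to get $\ov\Pi(x)t\le\ov\Pi(1)t+I_2(1+\epsilon)^{-2}\,t/a_t^2\to0$, using the hypothesis $a_t/\sqrt t\to\infty$. For the second, $\e R_t=m(x)t$ and $\mathrm{Var}(R_t)=(v(x)+\sigma^2)t$; Lemma~\ref{lem:BG-bounds} (again with $q=2$) gives $|m(x)t|\le|\gamma|t+I_2\,t/x$ and $v(x)\le I_2$, so that $|m(x)t|/a_t\to0$ and $\mathrm{Var}(R_t)/a_t^2=(v(x)+\sigma^2)\,t/a_t^2\to0$, both because $t/a_t\to0$ and $t/a_t^2\to0$. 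A single application of Chebyshev's inequality then yields $\p(|R_t|\ge\epsilon a_t)\to0$, and combining the two limits completes the argument. (The degenerate case $\ov\Pi(a_t(1+\epsilon))=0$ for small $t$, e.g.\ $\Pi=0$, is vacuous; if $\Pi\ne0$ then $\ov\Pi(x)>0$ for all small $x$, so the ratio is well-defined along $t\downarrow0$.)

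The only genuinely delicate point — which I expect to be the main obstacle — is the ``smallness of the remainder'' estimate $\p(|R_t|<\epsilon a_t)\to1$ when $Y$ has infinite variation, since then $m(x)\to\pm\infty$ as $x\downarrow0$ and one must nevertheless verify $m(x_t)t=o(a_t)$ and $(v(x_t)+\sigma^2)t=o(a_t^2)$; this is precisely where the assumption $a_t/\sqrt t\to\infty$ is consumed, via the Blumenthal--Getoor-type bounds of Lemma~\ref{lem:BG-bounds}. Everything else is routine.
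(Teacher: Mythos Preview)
Your proposal is correct and follows essentially the same route as the paper: the paper also sets $x_t=a_t(1+\epsilon)$, restricts to the event of exactly one jump of $J^{x,2}$ on $[0,t]$, and then shows the remainder $m(x_t)t+\tilde J^{x_t,1}_t$ (with $\tilde J^{x,1}=J^{x,1}+\sigma W$, i.e.\ your $R_t$) is below $\epsilon a_t$ with high probability using Lemma~\ref{lem:BG-bounds} with $q=2$ and a second-moment (Markov) bound. The only cosmetic difference is that the paper separates the deterministic drift $|m(x_t)|t$ from the centred part before applying the moment bound, whereas you apply Chebyshev directly to $R_t$; the estimates and the use of $a_t/\sqrt t\to\infty$ are identical.
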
	

\begin{proof}
Take $x=x_t=a_t(1+\epsilon)$ and consider the event that $J^{x,2}_s$ has exactly 
one jump in $[0,t]$, which yields the lower bound 
\[
\p(|X_t|>a_t)
\ge t(1+o(1))\ov\Pi(x_t)\p\big(|\tilde J^{x_t,1}_t|+|m(x_t)|t<a_t\epsilon\big),
\]
where $\tilde J^{x,1}=J^{x,1}+\sigma W$. Here we use $t\ov\Pi(x_t)\to 0$ which 
follows from Lemma~\ref{lem:BG-bounds} with $q=2$ and the assumption 
$ta_t^{-2}\to 0$. Furthermore, we have $|m(x_t)|t/a_t\to 0$ and 
$\p(|\tilde J^{x_t,1}_t|>a_t\epsilon/2)\to 0$ which follows from Markov's 
inequality and the fact that 
$\e[(\tilde J^{x_t,1}_t)^2]/a_t^2=t(\sigma^2+v(x_t))/a_t^2\to 0$. 
This completes the proof.
\end{proof}

\begin{proof}[Proof of Lemma~\ref{lem:levy}]
Part (a). 
The inequality follows from Lemma~\ref{lem:lower_boundY}. The limit is a 
consequence of the second inequality in Lemma~\ref{lem:trunc_mom} with 
$t=1/n$ and $K=x=a_n$, Lemma~\ref{lem:BG-bounds} with $q=2$ and the fact 
that $a_n\sqrt{n}\to\infty$. 

Part (b). 
From Lemma~\ref{lem:trunc_mom} with $x_n^2=n^{-1}\log n$ we have the bound
\[
n^{2-p/2}\e\Big(Y_{1/n}^2\wedge \frac{\log n}{n}\Big)
\le n^{-p/2}m(x_n)^2 + n^{1-p/2}v(x_n) + n^{-p/2}\log(n) \ov\Pi(x_n).
\]
Assume that $\beta^*<2$, pick $q<p$ such that $I_q<\infty$, and apply 
Lemma~\ref{lem:BG-bounds}. The first term vanishes because $-p/2+(q-1)^+\leq 0$. 
The second term vanishes because $1-p/2-(2-q)/2<0$.
The third term vanishes since $-p/2+q/2<0$. Finally, if $\beta^*=2$ then 
taking $q=p=2$, proceeding as in the previous case and using the facts that 
$x^2\ov\Pi(x)\to0$ and $v(x)\to0$ as $x\to0$, gives the result.

Part (c). 
Applying Lemma~\ref{lem:trunc_mom} with $x=n^{-1/4}$ to $Y$ gives 
\begin{align*}
n\log(n)[\e(|Y_{1/n}|\wedge 1)]^2 
\le&\, 2n^{-1}\log(n)m\big(n^{-1/4}\big)^2
	+2\log(n) \big(v\big(n^{-1/4}\big)-\sigma^2\big)\\ 
	&+ 2n^{-1}\log(n)\ov\Pi\big(n^{-1/4}\big)^2.
\end{align*}
Take $q$ satisfying $\beta^*\vee 1<q<2$ and apply 
Lemma~\ref{lem:BG-bounds} to show that this quantity indeed tends to~0.
\end{proof}

\section{Proofs for Method II}
\label{sec:proofsII}
Without loss of generality we assume throughout this section that~$\sigma=1$, 
which is based on a simple rescaling argument. The main ingredient in the proof 
of Theorem~\ref{thm:tunc_method} is the following result.
\begin{lemma}\label{lem:trunc1}
Assume that $\int_{|x|>1} x^2\Pi(\D x)<\infty$. Then
for $p\in(\beta^*,2]$ we have
\begin{align*}
&n^{2-p/2}\e \big(Y^2_{1/n}\1{|X_{1/n}|\leq a_n}\big)\to 0,
 &n^{2-p/2}\e \big(W^2_{1/n}\1{|X_{1/n}|>a_n}\big)\to 0,
\end{align*}
assuming that the positive sequence $a_n$ satisfies
\[
\liminf_{n\to\infty}\frac{na^2_n}{\log n}>2-p
	\qquad\text{and}\qquad
a_nn^{1/2-\delta}\to 0\quad\text{for some}\quad 
	\delta<\frac{p-\beta^*}{2(2-\beta^*)}.
\]
This result is also true for $p=2$ and any $\beta^*$, in which case the upper bound on 
$a_n$ is replaced by $a_n\to 0$.
\end{lemma}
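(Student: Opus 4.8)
Throughout $\sigma=1$, so $X=Y+W$ with $W$ independent of $Y$; recall the L\'evy--It\^o splitting of $Y$ at level $x\in(0,1]$, namely $Y_{1/n}=m(x)/n+J^{x,1}_{1/n}+J^{x,2}_{1/n}$ with $\e[(J^{x,1}_{1/n})^2]=v(x)/n$, where $J^{x,2}$ carries the jumps of size $\ge x$ and hence $\p(J^{x,2}_{1/n}\ne0)\le\ov\Pi(x)/n$. Fix $p\in(\beta^*,2]$. The lower bound on $a_n$ yields $a_n\gtrsim\sqrt{\log n/n}$ (so $a_n\sqrt n\to\infty$), and for $p<2$ the upper bound yields $a_n=o(n^{-1/2+\delta})$. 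Choose $q\in(\beta^*,p)$ so close to $\beta^*$ that $\delta<\frac{p-q}{2(2-q)}$ still holds --- possible because $q\mapsto\frac{p-q}{2(2-q)}$ is non-increasing for $p\le2$, with supremum over $q>\beta^*$ equal to $\frac{p-\beta^*}{2(2-\beta^*)}$. Lemma~\ref{lem:BG-bounds} then supplies $\ov\Pi(a_n)\lesssim a_n^{-q}$, $m(a_n)\lesssim a_n^{-(q-1)^+}$, $v(a_n)\lesssim a_n^{2-q}$. In the remaining case $p=2$ one keeps $q=2$ and replaces the $\delta$-constraint by the available facts $a_n\to0$, $na_n^2\to\infty$.

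\textbf{First limit.} Split $Y$ at level $a_n$ (at $2a_n$ when $p=2$): $Y_{1/n}=S_{1/n}+J_{1/n}$ with $S=m(\cdot)/n+J^{\cdot,1}$, and use $Y_{1/n}^2\le2S_{1/n}^2+2J_{1/n}^2$. The ``small'' part is treated via its second moment: $n^{2-p/2}\e[2S_{1/n}^2]=2n^{-p/2}m(a_n)^2+2n^{1-p/2}v(a_n)$, which upon inserting the bounds above becomes a sum of negative powers of $n$ times powers of $\log n$; here the $v(a_n)$-term is precisely the one forcing $\delta<\frac{p-q}{2(2-q)}$, while the $m(a_n)$-term requires only $q<1+p/2$, automatic from $q<p\le2$ (for $p=2$ use $na_n^2\to\infty$). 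For the jump part, on $\{|X_{1/n}|\le a_n\}$ one has deterministically $|J_{1/n}|\le a_n+R_n$ with $R_n:=|S_{1/n}|+|W_{1/n}|$, whereas $J_{1/n}^2=J_{1/n}^2\1{J_{1/n}\ne0}$ with $\{J_{1/n}\ne0\}$ independent of $R_n$ and of probability $\le\ov\Pi(a_n)/n$; hence $\e[J_{1/n}^2\1{|X_{1/n}|\le a_n}]\le\e[(a_n+R_n)^2]\,\ov\Pi(a_n)/n$, and $\e[(a_n+R_n)^2]=\Oh(a_n^2)$ because $a_n^2\gtrsim\log n/n\gg1/n$. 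After scaling this term is $\lesssim n^{1-p/2}a_n^2\ov\Pi(a_n)\lesssim n^{1-p/2}a_n^{2-q}$, which vanishes under the same $\delta$. When $p=2$ one additionally uses that a single jump of size $\ge2a_n$ together with $|X_{1/n}|\le a_n$ forces $R_n\ge a_n$, producing a gain $\p(R_n\ge a_n)$ (controlled by Chebyshev on $S_{1/n}$ and a crude Gaussian estimate on $W_{1/n}$, both $\to0$ after the $n$-scaling since $na_n^2\to\infty$), while $\e[J_{1/n}^2]\le V/n$ with $V=v(1)+\int_{|x|>1}x^2\Pi(\D x)<\infty$ --- the only place the standing moment hypothesis is used --- and the $\ge2$-jump contribution is of strictly smaller order.

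\textbf{Second limit.} Choose $\epsilon>0$ with $na_n^2\ge(2-p+\epsilon)\log n$ eventually and a small $\rho=\rho(p,\epsilon)>0$ with $(1-\rho)^2(2-p+\epsilon)>2-p$, and split on $W_{1/n}$ at level $(1-\rho)a_n$. On $\{|W_{1/n}|\le(1-\rho)a_n\}\cap\{|X_{1/n}|>a_n\}$ one has $|Y_{1/n}|>\rho a_n$, so by independence this part is $\le\tfrac1n\p(|Y_{1/n}|>\rho a_n)$, and Lemma~\ref{lem:trunc_mom} bounds the probability by $\lesssim\ov\Pi(\rho a_n)/n\lesssim a_n^{-q}/n$ (its other terms being smaller); thus $n^{2-p/2}\cdot n^{-2}a_n^{-q}=n^{-p/2}a_n^{-q}\lesssim n^{-(p-q)/2}(\log n)^{-q/2}\to0$. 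On $\{|W_{1/n}|>(1-\rho)a_n\}$ discard the $X$-indicator and face the Gaussian tail-moment $\e[W_{1/n}^2\1{|W_{1/n}|>(1-\rho)a_n}]\lesssim\tfrac{a_n}{\sqrt n}\,e^{-(1-\rho)^2a_n^2n/2}$. Writing $a_n^2n=c_n\log n$ (so $c_n\ge2-p+\epsilon$), the $n^{2-p/2}$-scaled term is $\asymp\sqrt{c_n\log n}\cdot n^{1-p/2-(1-\rho)^2c_n/2}$; by the choice of $\rho$ the exponent of $n$ is bounded above by a negative constant whenever $c_n$ stays bounded, while $c_n\to\infty$ forces the term to $0$ super-polynomially --- so it vanishes in every case. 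The case $p=2$ is identical with $q=2$ and $c_n\ge\epsilon$.

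\textbf{Main obstacle.} The delicate point is this last Gaussian piece: one must resist bounding $\sqrt{c_n\log n}$ by $n^{\delta}\sqrt{\log n}$ and instead play the sharp exponent $e^{-(1-\rho)^2a_n^2n/2}$ against the fixed power $n^{1-p/2}$ via $na_n^2\gtrsim(2-p+\epsilon)\log n$, uniformly in the size of $a_n$. The second subtlety is the choice of the big-jump split level ($a_n$ versus $2a_n$) and of $q$, which must be arranged so that the admissible range of $\delta$ reproduces exactly $\frac{p-\beta^*}{2(2-\beta^*)}$, a bound dictated entirely by the $v(a_n)$-term.
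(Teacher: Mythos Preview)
Your argument is correct. The treatment of the second limit is essentially the paper's: split on $|W_{1/n}|$ at a level just below $a_n$ (the paper uses $a_n(1-b_n)$ with $b_n\downarrow 0$, you use a fixed $(1-\rho)a_n$ with $\rho$ tuned to $\epsilon$), then control the Gaussian tail moment via the lower bound on $na_n^2/\log n$ and the remaining piece via $\p(|Y_{1/n}|>\rho a_n)$ and Lemma~\ref{lem:trunc_mom}.

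For the first limit you take a genuinely different route. The paper uses the set inclusion $\{|X_{1/n}|\le a_n\}\subset\{|Y_{1/n}|\le 2a_n\}\cup\{|W_{1/n}|>a_n\}$, which reduces the problem to $\e[Y_{1/n}^2\1{|Y_{1/n}|\le 2a_n}]$ (bounded via Lemma~\ref{lem:trunc_mom}) and $\e[Y_{1/n}^2]\p(|W_{1/n}|>a_n)$ (bounded via independence and Mill's ratio; this is where the paper spends the moment hypothesis $\int_{|x|>1}x^2\Pi(\D x)<\infty$). You instead perform the L\'evy--It\^o split $Y=S+J$ explicitly and bound $\e[S^2]$ without the indicator and $\e[J^2\1{|X|\le a_n}]$ via the deterministic bound $|J|\le a_n+R_n$ and the independence of $\{J\ne 0\}$ from $R_n$. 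Both give the same rate and the same constraint $\delta<(p-q)/[2(2-q)]$ on the $v(a_n)$-term. The cost of your approach is the $p=2$ case: there the paper simply takes $q=2$ and invokes $v(a_n)\to 0$ and $a_n^2\ov\Pi(a_n)\to 0$ directly, whereas you need the extra single-jump/multi-jump dichotomy and a separate appeal to the moment hypothesis through $\e[J_{1/n}^2]=\Oh(1/n)$ (note your stated bound $\e[J_{1/n}^2]\le V/n$ is only true up to a constant, since the squared mean of $J$ contributes an $o(1/n)$ term). The paper's set-inclusion trick is thus a bit more economical; your decomposition is more explicit and makes the role of each piece of the L\'evy--It\^o splitting transparent.
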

\begin{proof}
The set inclusion $\{|X_{1/n}|\leq a_n\}
\subset\{|Y_{1/n}|\leq 2a_n\}\cup\{|W_{1/n}|>a_n\}$
implies 
\[
\e\big(Y_{1/n}^2\1{|X_{1/n}|\leq a_n}\big)
	\le\e\big(Y_{1/n}^2\1{|Y_{1/n}|\leq 2a_n}\big)
		+ \e\big(Y_{1/n}^2\1{|W_{1/n}|> a_n}\big).
\]
Since $\e(Y_{1/n}^2)\sim \int_\R x^2\Pi(\D x)/n$, it follows by Mill's ratio that 
\begin{equation}
\label{eq:trunc_Y_1W}
\e\big(Y_{1/n}^2\1{|W_{1/n}|> a_n}\big)
	=2\e(Y_{1/n}^2)\Phi(-\sqrt{n}a_n)
	\sim\frac{2e^{-na^2_n/2}}{\sqrt{2\pi}n^{3/2}a_n}\int_\R x^2\Pi(\D x).
\end{equation} 
The assumed lower bound on $a_n$ implies that $a_n^2n/2>(1-p/2)\log n$ 
for all large enough $n$, showing that the term in~\eqref{eq:trunc_Y_1W} is 
$o(n^{p/2-2})$; the case $p=2$ needs special attention but is otherwise 
straightforward. To bound the other expectation, we use 
Lemmas~\ref{lem:trunc_mom} and~\ref{lem:BG-bounds} to obtain 
\begin{align}
\nonumber
\e\big(Y_{1/n}^2\1{|Y_{1/n}|\leq 2a_n}\big)
&\le m(a_n)^2/n^2+v(a_n)/n+4a^2_n\ov\Pi(a_n)/n\\
\label{eq:trunc_Y_1Y}
&\le 2\big(\gamma^2+I_q^2a_n^{-2(q-1)^+}\big)/n^2
	+5 I_q a_n^{2-q}/n+4\ov\Pi(1)a_n^2/n,
\end{align}
for any $q\in(\beta^*,p)$ whenever $a_n\le 1$ (i.e., for all large enough $n$). 
Note that $na_n^q\to\infty$ and so it is enough to establish an upper bound on 
$a_n^{2-q}/n$. But we have assumed that this term is 
$n^{-1+(2-q)(-1/2+\delta)}o(1)$ for certain $\delta$.
Equating the power to $-2+p/2$ we find that 
$\delta=(p-q)/[2(2-q)]\uparrow (p-\beta^*)/[2(2-\beta^*)]$ as 
$q\downarrow \beta^*$, and hence we have the bound $o(n^{-2+p/2})$. 
If $p=2$ then we may take $q=2$ to see that 
the result is $o(n^{-1})$ as claimed.

With regard to the second statement, we fix any sequence $b_n\downarrow0$ 
with values in $(0,1)$ and denote $c_n=a_n(1-b_n)$. As before, 
we have the inequality 
\[
\e\big(W_{1/n}^2\1{|X_{1/n}|>a_n}\big)
\le \e\big(W_{1/n}^2\1{|Y_{1/n}|>a_nb_n}\big)
	+\e\big(W_{1/n}^2\1{|W_{1/n}|>c_n}\big).
\]
The second term on the right may be written as 
\begin{equation}
\label{eq:trunc_W_1W}
2n^{-1}\e\big(W_1^2\1{W_1>\sqrt nc_n}\big)
=2n^{-1}\int_{c_n\sqrt{n}}^\infty x^2\frac{e^{-x^2/2}}{\sqrt{2\pi}}\D x
\sim\frac{2c_n e^{-nc^2_n/2}}{\sqrt{2\pi n}}.
\end{equation}
This term is only made larger by taking $c_n$ smaller, and so we may assume 
that $c_n^2n/2=(1-p/2+\epsilon)\log n$ for some $\epsilon>0$ and all large $n$. 
Thus the term in~\eqref{eq:trunc_W_1W} is $o(n^{p/2-2})$. On the other hand, 
according to Lemma~\ref{lem:trunc_mom}, the first term satisfies 
\begin{align}
\nonumber
\e\big(W_{1/n}^2\1{|Y_{1/n}|>a_nb_n}\big)
&= \p(|Y_{1/n}|>a_nb_n)/n\\
\nonumber
&\le (m(a_n)^2 /n^3 + v(a_n) /n^2)a_n^{-2}b_n^{-2}+ \ov\Pi(a_n)/n^2,
\end{align}
when $a_n<1$.
Since $na_n^2\to \infty$, the argument used in~\eqref{eq:trunc_Y_1Y} completes the proof upon taking 
$b_n$ such that $na_n^2b_n^2\geq 1$.
\end{proof}

\subsection{Convergence results}
\begin{proof}[Proof of Theorem~\ref{thm:tunc_method}]
Without loss of generality we assume that the jumps of $X$ are 
bounded, since they are below a threshold $K\to\infty$ with probability tending to 1. As in the 
proof of Theorem~\ref{thm:gral_rate}, it suffices to prove the result when 
the supremum is taken over the grid $(i/n)_{i=1,\ldots,n}$. 
On that grid, $\hat W^\n-W$ is a random walk and so 
$(W_{i/n}-\hat W_{i/n}^\n-(W_1-\hat W_1^\n)i/n)_{i=1,\ldots,n}$ has 
exchangeable increments, while being~0 at~$i=n$. Thus, the first assertion is equivalent to: 
\[
n^{(2-p)/2}\sum_{i\le n} \Big(V^\n_i-\frac{1}{n}\sum_{j\le n}V^\n_j\Big)^2\cip 0,
\quad\text{where}\quad
V^\n_i
=\begin{cases}\Delta ^n_i W, &|\Delta ^n_i X|>a_n,\\
	-\Delta ^n_i Y, &|\Delta ^n_i X|\leq a_n.
\end{cases}
\]
This will be shown by proving convergence in $L^1$. As before, the expectation of the sum above may be rewritten as 
\begin{equation}
\label{eq:compensator}
(n-1)\left(\e\big[\big(V^\n_1\big)^2\big]-\big(\e V^\n_1\big)^2\right).
\end{equation}
But now the result follows from 
\begin{equation*}
n^{2-p/2}\e \big(V^\n_1\big)^2
=n^{2-p/2}\e \big(W^2_{1/n}\1{|X_{1/n}|>a_n}\big)
	+n^{2-p/2}\e \big(Y^2_{1/n}\1{|X_{1/n}|\leq a_n}\big)\to 0,
\end{equation*}
where in the last step we apply Lemma~\ref{lem:trunc1}.
\end{proof}

\begin{proof}[Proof of Proposition~\ref{prop:drift}]
In view of Theorem~\ref{thm:tunc_method} it is sufficient to show the stated convergence for $t=1$, which is equivalent to 
\[
n^{(1-p)/2}\Big(\sum_{i\le n}V^\n_i+\gamma_0\Big)\cip 0.
\]
Arguments similar to those in Lemma~\ref{lem:trunc1} give 
$n^{3/2-p/2}\e\big(|W_{1/n}|\1{|X_{1/n}|> a_n}\big)\to 0$. 
Hence we only need to show that
\[
n^{(1-p)/2}\Big(\gamma_0-\sum_{i\leq n} \Delta_i^n Y\1{|\Delta_i^n X|
	\leq a_n}\Big)\cip 0,
\]
and for this the following is sufficient:
\[
n^{(1-p)/2}\e\Big|\gamma_0-nY_{1/n}\1{|X_{1/n}|\leq a_n}\Big|\to 0.
\]
Write $Y_{1/n}=\gamma_0/n+J_{1/n}$, where $J_t$ is the uncompensated sum of 
the jumps of $Y$ on $(0,t]$. We note that $n^{(1-p)/2}\p(|X_{1/n}|>a_n)\to 0$ 
so it remains to prove 
\[
n^{3/2-p/2}\e(|J_{1/n}|\1{|X_{1/n}|\leq a_n})\to 0.
\]
Again, by the arguments in Lemma~\ref{lem:trunc1} we may replace the indicator 
by $\1{|J_{1/n}|\leq 2a_n}$. It is left to note that the simple structure of $J$ allows for an improved bound as compared to Lemma~\ref{lem:trunc_mom}:
\[
\e (|J_{1/n}|\wedge a_n)
\le n^{-1}\int_{(-a_n,a_n)} |y|\Pi(\D y) + a_n\overline\Pi(a_n)/n,
\]
where the first part corresponds to the sum of the absolute jumps of $J$ of size 
smaller than $a_n$ and the second part to $a_n$ times the probability of 
observing at least one jump whose size is at least~$a_n$ in absolute value. 
Both terms are $o(a_n^{1-q}/n)$ for $q>\beta^*$ and hence $o(n^{-3/2+p/2})$, 
proving the main claim. For $p=1$ we additionally observe that the display above 
is $o(n^{-1})$ because necessarily $x\overline\Pi(x)\to 0$ as $Y$ is of bounded 
variation. 
\end{proof}

\subsection{Cases when convergence fails}

Here we prove the negative results yielding Proposition~\ref{prop:neg}.
Consider the $n/2$ epoch, assuming $n$ is even in the following, and rewrite 
the difference of interest as the sum of independent symmetric terms:
\[
n^{(2-p)/4}(W_{1/2}-\hat W_{1/2}^\n-(W_1-\hat W_1^\n)/2)
=\frac{1}{2}\sum_{i\leq n/2}n^{(2-p)/4}(V_i^\n-V^\n_{n/2+i}).
\] 
Now the standard result~\cite[Ex.~4.18]{MR1876169} states that converges to~0 
in probability is equivalent to convergence to~0 of the sum of the expected 
(truncated) squares:
\begin{equation}\label{eq:disprove}
n\e\Big(n^{1-p/2}(V_1^\n-V^\n_{2})^2\wedge 1\Big)\to 0.
\end{equation}
Hence we only need to disprove the latter.

First, we consider the case when the threshold $a_n$ is too small:
\begin{lemma}\label{lem:neg1}
If $\liminf na^2_n/\log n<2-\beta^*$ then~\eqref{eq:disprove} 
fails for some $p\in(\beta^*,2]$.
\end{lemma}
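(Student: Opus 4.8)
The statement is vacuous when $\beta^*=2$, since $na_n^2/\log n\ge0$ makes the hypothesis unsatisfiable in that case; so I assume $\beta^*<2$. Recalling that it remains only to disprove~\eqref{eq:disprove}, the plan is to fix constants
\[
\liminf_n\frac{na_n^2}{\log n}\;<\;c\;<\;2-\beta^*,\qquad \beta^*<p<2-c,\qquad c<K^2<2-p,
\]
which is possible precisely because of the standing hypothesis — and this is the only place where it is used — then to set $\rho_n=K\sqrt{\log n/n}$ and to argue along a subsequence $(n_k)$ on which $n_ka_{n_k}^2/\log n_k<c<K^2$, so that $a_{n_k}<\rho_{n_k}$. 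Along this subsequence I will exhibit an event $G_n$, depending only on the first two increments, on which the truncated square in~\eqref{eq:disprove} is at least a constant multiple of $n^{-p/2}\log n$, while $\p(G_n)$ is at least a constant multiple of $n^{-K^2/2}(\log n)^{-1/2}$; then $1-p/2-K^2/2>0$ will force the sum in~\eqref{eq:disprove} to diverge, contradicting its convergence to~$0$.

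Concretely, I would take
\[
G_n=\bigl\{|\Delta^n_1W|\ge\rho_n\bigr\}\cap\bigl\{\sign(\Delta^n_1Y)=\sign(\Delta^n_1W)\ \text{or}\ \Delta^n_1Y=0\bigr\}\cap\bigl\{|\Delta^n_2W|\le\tfrac12\rho_n\bigr\}\cap\bigl\{|\Delta^n_2Y|\le\tfrac12\rho_n\bigr\}.
\]
The four variables $\Delta^n_1W,\Delta^n_1Y,\Delta^n_2W,\Delta^n_2Y$ are mutually independent (independence of increments and $W\perp Y$), and $\sqrt n\,\Delta^n_1W$ is a symmetric standard normal, so conditioning on $\Delta^n_1Y$ gives that the first two events together have probability at least $\tfrac12\p(|\Delta^n_1W|\ge\rho_n)=\Phi(-K\sqrt{\log n})$; the third probability tends to $1$, and the fourth tends to $1$ by Lemma~\ref{lem:levy}(a), since $\rho_n\downarrow0$ with $\rho_n\sqrt n=K\sqrt{\log n}\to\infty$. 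Hence $\p(G_n)$ is bounded below by a constant multiple of $\Phi(-K\sqrt{\log n})$, which by Mill's ratio is $\ge c_1 n^{-K^2/2}(\log n)^{-1/2}$ for some $c_1>0$ and all large $n$. On $G_n$ one has $|\Delta^n_1X|\ge|\Delta^n_1W|\ge\rho_n>a_n$ (using the sign coincidence, or triviality when $\Delta^n_1Y=0$), so $V^\n_1=\Delta^n_1W$ and $|V^\n_1|\ge\rho_n$, while $|\Delta^n_2W|\vee|\Delta^n_2Y|\le\tfrac12\rho_n$ forces $|V^\n_2|\le\tfrac12\rho_n$ in either branch of the definition of $V^\n_2$; consequently $(V^\n_1-V^\n_2)^2\ge\tfrac14\rho_n^2$ on $G_n$.

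Since $n^{1-p/2}\rho_n^2=K^2n^{-p/2}\log n\to0$, the truncation in~\eqref{eq:disprove} is inactive on $G_n$ for large $n$, and restricting the expectation to $G_n$ yields
\[
n\,\e\Bigl(n^{1-p/2}(V^\n_1-V^\n_2)^2\wedge1\Bigr)\ \ge\ \tfrac14\,n^{2-p/2}\rho_n^2\,\p(G_n)\ \ge\ \tfrac14 c_1K^2\,n^{1-p/2-K^2/2}(\log n)^{1/2}\ \longrightarrow\ \infty,
\]
because $K^2<2-p$. Thus~\eqref{eq:disprove} fails for this $p\in(\beta^*,2)$. The delicate point — and the step I expect to take the most care — is the calibration of $\rho_n$: it must exceed $a_n$ so that a large first Brownian increment, together with a sign agreement of $\Delta^n_1Y$, already pushes $\Delta^n_1X$ past the threshold and has it discarded (hence $V^\n_1=\Delta^n_1W$), yet it must be small enough, of order $\sqrt{\log n/n}$ with constant $K^2<2-p$, that the Gaussian probability $\Phi(-K\sqrt{\log n})\asymp n^{-K^2/2}$ is not killed by the factor~$n$ in front of the expectation; the window $na_n^2/\log n<K^2<2-p$ in which such a $\rho_n$ exists, optimised over admissible $p>\beta^*$, is exactly the hypothesis $\liminf_n na_n^2/\log n<2-\beta^*$.
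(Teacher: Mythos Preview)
Your proof is correct and follows essentially the same strategy as the paper: lower-bound the expectation in~\eqref{eq:disprove} on an event where the first Brownian increment is large enough to be discarded by the filter (so $V^\n_1=\Delta^n_1W$), while the second increment contributes a small $V^\n_2$, and then balance the resulting size $n^{1-p/2}\rho_n^2$ against the Gaussian tail probability.

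The only noteworthy difference is in the execution. The paper works directly with the level $a_n$ in the event $\{\Delta^n_1W>a_n,\Delta^n_1Y\ge0,|\Delta^n_2W|\le a_n/2,|\Delta^n_2Y|\le a_n/2\}$, which forces it to treat the case $a_n\le n^{-1/2}$ separately (since then $\p(|\Delta^n_2Y|\le a_n/2)$ need not be bounded away from~0) and to assume without loss of generality that $\p(Y_{1/n}\ge0)\ge 1/2$. Your introduction of the auxiliary level $\rho_n=K\sqrt{\log n/n}>a_n$ sidesteps both of these: Lemma~\ref{lem:levy}(a) always applies to $\rho_n/2$, and the sign-matching condition replaces the WLOG argument symmetrically. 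This makes your argument slightly cleaner, at the cost of carrying one more parameter. One cosmetic remark: your phrase ``the truncation is inactive on $G_n$'' is not literally true, since $|\Delta^n_1W|$ may well exceed $\rho_n$ and push $n^{1-p/2}(V^\n_1-V^\n_2)^2$ above~1; what you actually use (and what is sufficient) is that the \emph{lower bound} $n^{1-p/2}\rho_n^2/4$ is below~1 for large~$n$, so $n^{1-p/2}(V^\n_1-V^\n_2)^2\wedge1\ge n^{1-p/2}\rho_n^2/4$ on~$G_n$.
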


\begin{proof}
We may assume that $a_n>n^{-1/2}$ for all large enough~$n$, because otherwise 
$V_1^\n=W_{1/n}$ with probability bounded away from~0, and the contradiction 
can be easily derived. To get an appropriate lower bound on the expectation 
in~\eqref{eq:disprove}, we consider the event 
\[
\{\Delta_1^n W>a_n,\Delta_1^n Y\geq 0,
	|\Delta_2^n W|\leq a_n/2,|\Delta_2^n Y|\leq a_n/2\}.
\]
We may assume that $\p(Y_{1/n}\geq 0)\geq 1/2$ since we may flip the signs 
otherwise. On this event we have $V_1-V_2\geq a_n/2$. Using the independence 
and the fact that $\p(|\Delta_2^n W|\leq a_n/2,|\Delta_2^n Y|\leq a_n/2)$ is 
bounded away from~0 (as $a_n>n^{-1/2}$ for large $n$) we get a bound
\begin{align*}
n\e\Big(n^{1-p/2}(V_1^\n-V^\n_{2})^2\wedge 1\Big)
&\ge c n(a_n^2n^{1-p/2}\wedge 1)\p(W_{1/n}> a_n)\\
&\ge c n^{1-p/2}\p(W_{1/n}> a_n)
\end{align*}
for some $c>0$. 
Note that $na^2_n/(2\log n)<1-\beta^*/2-\epsilon$ for some $\epsilon>0$ along 
some subsequence. Along that subsequence we have 
$\p(W_1> a_n\sqrt n)\geq n^{-1+\beta^*/2+\epsilon/2}$ for large~$n$. Thus the 
contradiction is obtained for any $p<\beta^*+\epsilon/2$. 
\end{proof}

Finally, we consider the case when $a_n$ is too large:
\begin{lemma}\label{lem:neg2}
Assume that $a_n\to 0$ and $\Pi\ne 0$. 
If $\limsup_{n\to\infty} a_nn^{1/2-\epsilon}>0$ for some $\epsilon>0$ 
then~\eqref{eq:disprove} fails for some $p\in(\beta_*,2]$.
\end{lemma}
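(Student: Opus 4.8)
The plan is to follow the blueprint of the proof of Lemma~\ref{lem:neg1}: produce a ``bad event'' on the first two increments on which $(V_1^\n-V_2^\n)^2$ is appreciably large and which has probability of order $n^{-1}$ times a factor that diverges along a suitable subsequence, so that \eqref{eq:disprove}, and hence \eqref{eq:trunc}, fails for some $p\in(\beta_*,2]$. The mechanism is dual to the one in Lemma~\ref{lem:neg1}: there $a_n$ is too \emph{small} and forces many purely Brownian increments past the threshold; here $a_n$ is too \emph{large}, so the filter retains many increments in which $Y$ makes a jump of \emph{intermediate} size --- too small to be cut once the tiny Brownian part is added, yet large enough to inflate $V_1^\n=-\Delta^n_1 Y$.

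First I would fix $\epsilon>0$ as in the hypothesis (necessarily $\epsilon<1/2$), pass to a subsequence along which $a_n\ge c_0n^{-1/2+\epsilon}$ for some $c_0>0$, and assume $\beta_*<2$ (otherwise $(\beta_*,2]=\varnothing$ and there is nothing to prove); all limits below are along this subsequence. Next I would fix an intermediate scale $r_n$ with $\sqrt{\log n/n}\le r_n\le a_n/8$ (a non-empty range for all large $n$), its precise value to be chosen at the end. Let $G_n$ be the event that (i) on $(0,1/n]$ the process $Y$ has exactly one jump of magnitude $\ge r_n$ and that magnitude lies in $[r_n,a_n/2]$, (ii) $|m(r_n)/n+J^{r_n,1}_{1/n}|\le r_n/4$, (iii) $|W_{1/n}|\le a_n/8$, and, on the increment $(1/n,2/n]$, (iv) $|\Delta^n_2 Y|\le r_n/4$ and $|\Delta^n_2 W|\le a_n/8$. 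A direct check on $G_n$ gives $|\Delta^n_1 X|<a_n$ and $|\Delta^n_2 X|<a_n$, so both increments survive the filter; moreover $|V_1^\n|=|\Delta^n_1 Y|\ge 3r_n/4$ while $|V_2^\n|=|\Delta^n_2 Y|\le r_n/4$, hence $(V_1^\n-V_2^\n)^2\ge r_n^2/4$ on $G_n$.

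By the independence of $W$ and $Y$, of the two increments, and of the L\'evy--It\^o components $J^{r_n,1},J^{r_n,2}$, the probability $\p(G_n)$ factorises. The factor coming from (i) is at least $\tfrac1n\big(\ov\Pi(r_n)-\ov\Pi(a_n/2)\big)e^{-\ov\Pi(r_n)/n}$, while conditions (ii)--(iv) each have probability tending to $1$: applying Lemma~\ref{lem:BG-bounds} with $q=2$ together with $nr_n^2\ge\log n\to\infty$ gives $\ov\Pi(r_n)/n\to 0$ and, by Chebyshev, $\p\big(|m(r_n)/n+J^{r_n,1}_{1/n}|>r_n/4\big)\le 16\big(m(r_n)^2/n^2+v(r_n)/n\big)/r_n^2\to 0$; similarly $\p(|\Delta^n_2 Y|>r_n/4)\to 0$ by Lemma~\ref{lem:trunc_mom}, and $\p(|W_{1/n}|>a_n/8)\to 0$ since $a_n\sqrt n\to\infty$. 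Hence $\p(G_n)\ge\tfrac1{2n}\big(\ov\Pi(r_n)-\ov\Pi(a_n/2)\big)$ for all large $n$, and therefore
\[
n\,\e\Big(n^{1-p/2}(V_1^\n-V_2^\n)^2\wedge 1\Big)\ \ge\ \tfrac18\big(n^{1-p/2}r_n^2\wedge 1\big)\big(\ov\Pi(r_n)-\ov\Pi(a_n/2)\big).
\]

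It remains to choose $r_n$ and $p\in(\beta_*,2]$ so that the right-hand side does not vanish. Taking $q\in(0,\beta_*)$, the definition of $\beta_*$ gives $\ov\Pi(x)\ge c\,x^{-q}$ for all small $x$; Lemma~\ref{lem:BG-bounds} gives $\ov\Pi(a_n/2)\le\ov\Pi(1)+I_{q'}(a_n/2)^{-q'}$ for any $q'>\beta^*$ with $I_{q'}<\infty$. Choosing $r_n$ well below $a_n$ (roughly of order $n^{-(2-p)/4}$, clipped into $[\sqrt{\log n/n},\,a_n/8]$) makes $\ov\Pi(r_n)$ dominate the subtracted tail $\ov\Pi(a_n/2)$, and substituting $a_n\ge c_0n^{-1/2+\epsilon}$ shows the displayed lower bound is at least a positive power of $n$ for $p$ taken sufficiently close to $\beta_*$ from above. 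This disproves \eqref{eq:disprove}, hence \eqref{eq:trunc}, for that $p$. The main obstacle is precisely this final balancing: $r_n$ must be small enough that $\ov\Pi(a_n/2)$ is negligible against $\ov\Pi(r_n)$ --- which forces $r_n$ far below $a_n$, quantitatively as far as the gap between $\beta_*$ and $\beta^*$ requires --- yet large relative to $n^{-1/2}$ so that the Brownian and compensated-small-jump noise do not erase the retained jump; pinning down the precise admissible range of $p$, and verifying that the mass $\ov\Pi(r_n)-\ov\Pi(a_n/2)$ does not degenerate (this is where $\Pi\ne 0$ together with $a_n\to 0$ enters), is where the real work lies.
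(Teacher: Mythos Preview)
Your overall plan matches the paper's --- a bad event on the first two increments forcing both to survive the filter with $|V_1^\n-V_2^\n|$ of order roughly $a_n$, followed by a power-counting argument using the lower bound on $\overline\Pi$ coming from $\beta_*$ --- but you have over-engineered the construction and left the decisive arithmetic unfinished.

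The paper dispenses with both the auxiliary scale $r_n$ and the explicit L\'evy--It\^o splitting of the bad event. It simply takes
\[
\{|\Delta_1^n W|\le a_n/2,\ a_n/4<|\Delta_1^n Y|\le a_n/2,\ |\Delta_2^n W|\le a_n/2,\ |\Delta_2^n Y|\le a_n/8\},
\]
obtains the lower bound $c\,n^{2-p/2}a_n^2\,\p(|Y_{1/n}|>a_n/4)$ for the left side of~\eqref{eq:disprove}, and applies Lemma~\ref{lem:lower_boundY} to get $c'n^{1-p/2}a_n^2\,\overline\Pi(a_n/2)\ge c''n^{1-p/2}a_n^{2-\nu}$ for any $\nu<\beta_*$. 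After first shrinking $\epsilon$ so that $\epsilon\le\beta_*/4$ (harmless, since $a_n\ge c_0n^{-1/2+\epsilon}$ persists with any smaller exponent), substituting $a_n\ge c_0 n^{-1/2+\epsilon}$ makes the power of $n$ nonnegative whenever $p\le\nu+2\epsilon(2-\nu)$, and the right side exceeds $\beta_*$ as $\nu\uparrow\beta_*$. The case $\beta_*=0$ (which your sketch omits, since you take $q\in(0,\beta_*)$) is handled separately by choosing $p<4\epsilon$ and using only $\Pi(\R)>0$.

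Two concrete points about your version. First, the free scale $r_n$ buys nothing: the only available lower bound is $\overline\Pi(r_n)\ge c\,r_n^{-q}$ with $q<\beta_*<2$, so the product $r_n^2\,\overline\Pi(r_n)\ge c\,r_n^{2-q}$ is \emph{increasing} in $r_n$, and the optimal choice is $r_n\asymp a_n$ --- which collapses your construction to the paper's. Second, and this is the genuine obstruction to your final step, your plan to dominate $\overline\Pi(a_n/2)$ via Lemma~\ref{lem:BG-bounds} with some $q'>\beta^*$ mixes the two indices: when $\beta_*<\beta^*$ the bounds $\overline\Pi(r_n)\ge c\,r_n^{-q}$ ($q<\beta_*$) and $\overline\Pi(a_n/2)\le C\,a_n^{-q'}$ ($q'>\beta^*$) cannot be combined to make $\overline\Pi(r_n)-\overline\Pi(a_n/2)$ large without pushing $r_n$ far below $a_n$, which as just noted destroys the $r_n^2$ prefactor. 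The paper sidesteps this entirely by never needing an upper bound on $\overline\Pi$; only the lower bound, hence only $\beta_*$, enters the argument.
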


\begin{proof}
First suppose $\beta_*>0$. 
Without real loss of generality we may assume that $a_n>n^{-1/2+\epsilon}$ 
for some $\epsilon\leq \beta_*/4$.
In this case we focus on the event
\[
\{|\Delta_1^n W|\leq a_n/2,a_n/4< |\Delta_1^n Y|\leq a_n/2,
	|\Delta_2^n W|\leq a_n/2,|\Delta_2^n Y|\leq a_n/8\},
\]
yielding a lower bound
\begin{equation}
\label{eq:neg2-low}
n\e\Big(n^{1-p/2}(V_1^\n-V^\n_{2})^2\wedge 1\Big)\geq cn^{2-p/2}a_n^2\p(|Y_{1/n}|>a_n/4)
\end{equation}
for large enough~$n$. 
According to Lemma~\ref{lem:lower_boundY} this is further lower bounded by
\[
c'n^{1-p/2}a_n^2\overline\Pi(a_n/2)
\geq c''n^{1-p/2}a_n^{2-\nu}
\geq c''n^{1-p/2}n^{(2-\nu)(-1/2+\epsilon)},
\]
for some $c',c''>0$ and $\nu<\beta_*$. 
The power is non-negative when $p\leq \nu+2\epsilon(2-\nu)$, but the right 
side can be made larger than $\beta_*$ by taking $\nu$ close to $\beta_*$; 
recall that our assumptions imply $\beta_*<2$.
In the case $\beta_*=0$ we choose $p<4\epsilon$ and lower bound the 
expression in~\eqref{eq:neg2-low} by some positive number as before, 
since $\Pi(\R)>0$.
\end{proof}

\begin{proof}[Proof of Proposition~\ref{prop:neg}]
Lemma~\ref{lem:neg1} and Lemma~\ref{lem:neg2} establish the second and third 
claims, respectively. The proof of first claim is analogous to that in 
Lemma~\ref{lem:neg2}. Note that the assumptions imply both $a_n\to 0$ and 
$\beta_*>0$, so $\Pi\ne0$. 
\end{proof}

\begin{appendix}
\section{Purely Brownian case}\label{sec:normal}
An interesting problem is to identify the exact rate of convergence of the 
skeletons in Proposition~\ref{prop:CPP} for the purely Brownian case, that is, 
when $Y=0$. Here we show that this rate is $\sqrt{\log\log n/n}$. We cannot, 
however, establish the limit law, nor its existence.

As in \S\ref{sec:proofs}, consider the variables 
\[
\xi_{ni}=\frac{1}{\sqrt{\log\log n}}
\bigg(Z_{\nu(i)}-Z'_{\pi(i)}-\frac{1}{n}\sum_{j\le n}(Z_j-Z'_j)\bigg),
\qquad i=1,\ldots,n.
\]
For any $n\in\N$, let $S^\n_t=\sum_{i\leq tn}\xi_{ni}$ be its cumulative sum process. We clearly have $S^\n_0=S^\n_1=0$ and the jumps 
of $S^\n$ are exchangeable. 

Following the proof of Lemma~\ref{lem:normal}, 
more specifically, the bounds in terms of the functions $\tilde{F}_n$ and 
$\tilde{G}_n$, we easily deduce that for any $a>4$, the quadratic variation of 
$S^\n$ satisfies 
\[
\p\bigg(\sum_{i\le n} {\xi}_{ni}^2>a\bigg)\to 0.
\] 
The stated tightness in turn implies, according 
to~\cite[Lem.~3.9]{MR2161313}, that the processes $S^\n$ are tight 
(and nonvanishing) in the Skorohod space $\mathcal{D}[0,1]$.
This establishes the claimed rate of convergence of skeletons.
\end{appendix}

\section*{Acknowledgments}
JGC is grateful for the support of The Alan Turing Institute under  
EPSRC grant EP/N510129/1 and CoNaCyT scholarship 
2018-000009-01EXTF-00624 CVU699336.
JI gratefully acknowledges financial support of Sapere Aude Starting Grant 
8049-00021B ``Distributional Robustness in Assessment of Extreme Risk''.


\end{document}